\newtheorem{thm}{Theorem}[section]
\newtheorem {asp}{Assumption}[section]
\newtheorem{lm}{Lemma}[section]
\newtheorem{rmk}{Remark}[section]
\newtheorem{deff}{Definition}[section]
\newtheorem{prop}{Proposition}[section]
\theoremstyle{definition}
\theoremstyle{remark}
\numberwithin{equation}{section}
\DeclareMathOperator{\suppo}{supp}
\DeclareMathOperator{\Conv}{Conv}
\newcommand{\eps}{\varepsilon}
\newcommand{\M}{\mathcal{M}}
\newcommand{\F}{\mathcal{F}}
\newcommand{\E}{\mathbb{E}}
\newcommand{\BE}{\mathbf{E}}
\newcommand{\BB}{\mathbf{B}}
\newcommand{\BX}{\mathbf{X}}
\newcommand{\bx}{\mathbf{x}}
\newcommand{\by}{\mathbf{y}}
\newcommand{\bc}{\mathbf{c}}
\newcommand{\bp}{\mathbf{p}}
\newcommand{\N}{\mathbb{N}}
\newcommand{\PP}{\mathbb{P}}
\newcommand{\K}{\mathcal{K}}
\newcommand{\R}{\mathbb{R}}
\newcommand{\Lom}{\mathcal{L}}
\newcommand{\U}{\mathcal{U}}
\newcommand{\wtd}{\widetilde}
\numberwithin{equation}{section}
\newcommand{\Rx}{\mathbb{R}_{1+}^\circ}
\newcommand{\Ri}{\mathbb{R}_{i+}^\circ}
\newcommand{\bed}{\begin{displaymath}}
\newcommand{\eed}{\end{displaymath}}
\newcommand{\bea}{\bed\begin{array}{rl}}
\newcommand{\eea}{\end{array}\eed}
\newcommand{\barray}{\begin{array}{ll}}
\newcommand{\earray}{\end{array}}
\newcommand{\diag}{{\rm diag}}
\newcommand{\1}{\boldsymbol{1}}
\newcommand{\0}{\boldsymbol{0}}
\newcommand{\bdelta}{\boldsymbol{\delta}}
\newcommand{\dist}{\mathrm{dist}}
\definecolor{OliveGreen}{cmyk}{0.64,0,0.95,0.40}
\def\bar{\overline}
\def\hat{\widehat}
\def\a.s{\text{\;a.s.\;}}
\def\dist{{\rm dist}}
\title[Three-dimensional stochastic ecological systems]{A classification of the dynamics of three-dimensional stochastic ecological systems }
\author[A. Hening]{Alexandru Hening }
\address{Department of Mathematics\\
Texas A\&M University\\
Mailstop 3368\\
College Station, TX 77843-3368\\
United States
}
\address{Department of Mathematics\\
Tufts University\\
Bromfield-Pearson Hall\\
503 Boston Avenue\\
Medford, MA 02155\\
United States
}
\email{al.hening@gmail.com}
\author[D.H. Nguyen]{Dang H. Nguyen }
\address{Department of Mathematics \\
University of Alabama\\
345 Gordon Palmer Hall\\
Box 870350 \\
Tuscaloosa, AL 35487-0350 \\
United States}
\email{dangnh.maths@gmail.com}
\author[S.J. Schreiber]{Sebastian J. Schreiber}
\address{Department of Evolution and Ecology \\
 University of California, Davis\\
  One Shields Avenue\\
  Davis, CA 95616\\
 United States}
  \email{sschreiber@ucdavis.edu}
\keywords{Kolmogorov system; ergodicity; Lotka--Volterra; Lyapunov exponent; random environmental fluctuations}
\subjclass[2010]{92D25, 37H15, 60H10, 60J60}
\begin{document}

\begin{abstract}
The classification of the long-term behavior of dynamical systems is a fundamental problem in mathematics. For both deterministic and stochastic dynamics specific classes of models verify Palis' conjecture: the long-term behavior is determined by a finite number of stationary distributions. In this paper we consider the classification problem for stochastic models of interacting species.  For a large class of three-species, stochastic differential equation models, we prove a variant of Palis' conjecture: the long-term statistical behavior is determined by a finite number of stationary distributions and, generically, three general types of behavior are possible: 1) convergence to a unique stationary distribution that supports all species, 2) convergence to one of a finite number of stationary distributions supporting two or fewer species, 3) convergence to convex combinations of single species, stationary distributions due to a rock-paper-scissors type of dynamic. Moreover, we prove that the classification reduces to computing Lyapunov exponents (external Lyapunov exponents) that correspond to the average per-capita growth rate of species when rare. Our results stand in contrast to the deterministic setting where the classification is incomplete even for three-dimensional, competitive Lotka--Volterra systems. For these SDE models, our results also provide a rigorous foundation for ecology's modern coexistence theory (MCT) which assumes the external Lyapunov exponents determine long-term ecological outcomes.
\end{abstract}
\maketitle


\section{Introduction}\label{s:intro}

Since the time of Newton and Bernoulli~\citep{newton1687,bernouilli1738}, dynamical models, whether they be deterministic or stochastic, have been used  to describe how physical, economic, and biological systems change over time. A fundamental challenge for these models has been and continues to be a classification of their long-term behaviors. For finite-state Markov chains, this long-term statistical behavior is characterized by a finite number of stationary distributions~\citep{norris1998}. For deterministic models, such as ordinary differential equations, \citet{palis2005,palis2008} conjectured that typically there are a finite number of stationary distributions characterizing the long-term statistical behavior for most initial states of the model. Decades of work have identified several classes of deterministic models, including Axiom A systems~\citep{young1986}, one-dimensional maps~\citep{kozlovski2003}, and partially hyperbolic systems~\citep{alves_araujo2007}, for which Palis' conjecture holds. However, for general, three-dimensional deterministic models, this conjecture still remains unproven. Here, we consider this type of classification problem for stochastic models of interacting populations. For these systems in three dimensions, we prove that, generically, there are three types of long-term statistical behavior that are characterized by a finite number of stationary distributions. This classification  is determined by certain Lyapunov exponents that correspond to the average per-capita growth rate of rare species. We conjecture that this classification scheme also  holds for higher dimensions.

For dynamical models in ecology, evolution, and epidemiology, the state variables may represent the densities of interacting species of plants, animals, microbes, and viruses. For these models, two fundamental problems of scientific and practical interest are identifying which of the species persist and which go extinct, and  understanding  the long-term statistical behavior of the densities of the persisting species~\citep{elith_leathwick2009,thieme2018,ellner2019}. There is a large theoretical literature devoted to the study of persistence and extinction for deterministic models. The most famous are studies of two competing species due to  Lotka and Volterra. Under the assumption of mass action interactions, \citet{V28} showed that, generically, one species drives the other species extinct when the species are competing for a single limiting resource; a prediction with extensive empirical support~\citep[see, e.g, the review by][]{wilson2007}. Alternatively, \cite{L25} demonstrated under what conditions competing species could coexist, setting the stage for modern coexistence theory~\citep{C00,ellner2019}. There has been a significant amount of work dedicated to the classification of the long term behavior of deterministic Lotka--Volterra systems \citep{B83, B95, Z93, HS94, T96,HS98}. While there is a full classification in dimension two \citep{B83, B95}, the classification is still incomplete for three dimensions even in the special case of competitive systems \citep{Z93, Z98, HS94,schreiber1999, XL00, GYW06, GY09}.

While theoretical population biologists have discovered many important phenomena by studying these deterministic models, population dynamics in nature are often buffeted by stochastic fluctuations in environmental factors.  As a result, one has to study the interaction between the population dynamics and these random environmental fluctuations to determine conditions for persistence and extinction. One successful approach to this problem has been the use of stochastic difference equations for discrete-time \citep{C82, CE89, C00,  BS09, S12, BS18, H19, HNC20} and stochastic differential equations (SDE) for continuous-time \citep{ERSS13, EHS15, LES03,SBA11, BHS08, HNY16, HN16, HN17b, HN17, B18, HL20, HNC20}.

For two dimensional SDEs, \citet{HN16} showed that, generically, the dynamics can be classified into four types: (i) both populations go asymptotically extinct with probability one, (ii) one population goes extinct while the other approaches a unique, positive stationary distribution with probability one, (iii) either species goes extinct with complementary positive probabilities, while the other approaches a unique stationary distribution associated with it, or (iv) both populations persist with probability one and approach a unique, positive stationary distribution. This classification is determined by Lyapunov exponents corresponding to the per-capita growth rates of species when they are infinitesimally rare.

Here, we extend this classification to three-dimensional systems. This extension leads to generalizations of the two-dimensional outcomes (i)--(iii) and introduces a different type of outcome. The generalization of (i)--(iii) is that for any collection of subcommunities ,i.e., subsets of species, where no subcommunity is contained in another, the ecological dynamics converge to a stationary distribution associated with any one of these subsets with positive probability. Alternatively, the new dynamic is a rock-paper-scissor extinction dynamic whereby the long-term statistical behavior is governed by convex combinations of three single species, stationary distributions. For SDEs of Lotka-Volterra type, we show that the classification reduces to solving a finite number of systems of linear equations. We also illustrate how conditions for species coexistence for the stochastic models can differ substantially from the coexistence conditions for the corresponding deterministic models. We conclude by summarizing our main results and making a conjecture of how to classify these systems in higher dimensions. We also discuss the implications for ecology's modern coexistence theory~\citep{C00,ellner2019}.

The paper is structured as follows. In Section \ref{s:models} we describe the models and our assumptions. The main results appear in Section \ref{s:main}. The proofs of the various propositions and theorems appear in Sections \ref{s:app1}, \ref{s:app2} and \ref{s:rock-paper-scissors} while the case by case classification of the dynamics is in Section \ref{s:results}. We apply our results to Lotka-Volterra systems in Section \ref{s:LV}. We conclude the paper with a discussion in Section \ref{s:disc}.

\section{Models and Assumptions}\label{s:models}
We consider the dynamics of $n\le 3$ interacting species whose densities at time $t$ are given by $\BX(t)=(X_1(t),X_2(t),\dots,X_n(t))$. To capture the effects on environmental stochasticity, the species dynamics are modeled by a system of stochastic differential equations of the form
\begin{equation}\label{e:system}
dX_i(t)=X_i(t) f_i(\BX(t))dt+X_i(t)g_i(\BX(t))dE_i(t), ~i=1,\dots,n
\end{equation}
where $\BE(t)=(E_1(t),E_2(t), \dots E_n(t))^T=\Gamma^\top\BB(t)$, $\Gamma$ is a $n\times n$ matrix such that $\Gamma^\top\Gamma=\Sigma=(\sigma_{ij})_{n\times n}$
and $\BB(t)=(B_1(t),B_2(t),\dots, B_n(t))$ is a vector of independent standard Brownian motions adapted to the filtration $\{\F_t\}_{t\geq 0}$. The system \eqref{e:system} is called a Kolmogorov system or generalized Lotka-Volterra system. The functions $f_i(\BX)$ correspond to the per-capita growth rate of species $i$ and the functions $g_i(\BX)$ determine the per-capita magnitude of the environmental fluctuations experienced by species $i$. Namely, $\rm{Var}[X_i(t+\Delta t)-X_i(t)|\BX(t)=\BX]=(X_ig_i(\BX))^2 \sigma_{ii} \Delta t + o(\Delta t)$. We refer the reader to the work by \cite{T77,G84, SBA11, HN16} for more details about why \eqref{e:system} makes sense biologically. We will denote by $\PP_\by(\cdot)=\PP(~\cdot~|~\BX(0)=\by)$ and $\E_\by[\cdot]=\E[~\cdot~|~\BX(0)=\by]$ the probability and expected value given that the process starts at $\BX(0)=\by\in \R_+^{n}:=[0,\infty)^n$. We will define the interior of the positive orthant by $\R_+^{n,\circ}:=(0,\infty)^n$.

To ensure the dynamics of \eqref{e:system} are well-defined and are stochastically bounded, we make the following standing assumptions.
\begin{asp}\label{a.nonde} The following hold:
\begin{enumerate}
\item $\diag(g_1(\bx), \dots, g_n(\bx))\Gamma^\top\Gamma\diag(g_1(\bx),\dots, g_n(\bx))=(g_i(\bx)g_j(\bx)\sigma_{ij})_{n\times n}$
is a positive definite matrix for any $\bx\in \R^n_+:=[0,\infty)^n$.
\item $f_i(\cdot), g_i(\cdot):\R^n_+\to\R$ are locally Lipschitz functions for any $i=1,\dots,n.$
\item
There are $\bc=(c_1,\dots,c_n)\in\R^{n,\circ}_+:=(0,\infty)^n$, $\gamma_b>0$ such that
\begin{equation}\label{a.tight}
\limsup\limits_{\|\bx\|\to\infty}\left[\dfrac{\sum_{i=1}^n c_ix_if_i(\bx)}{1+\sum_{i=1}^n c_ix_i}-\dfrac12\dfrac{\sum_{i,j=1}^n \sigma_{ij}c_ic_jx_ix_jg_i(\bx)g_j(\bx)}{(1+\sum_{i=1}^n c_ix_i)^2}+\gamma_b\left(1+\sum_{i=1}^n (|f_i(\bx)|+g_i^2(\bx))\right)\right]<0.
\end{equation}
\end{enumerate}
\end{asp}
\begin{rmk}
Part (1) of Assumption \ref{a.nonde} to ensure that the solution to \eqref{e:system} is a non-degenerate diffusion. Parts (2) and (3) guarantee the existence and uniqueness of strong solutions to \eqref{e:system}. Moreover, (3) implies the tightness of the family of transition probabilities of the solution to \eqref{e:system}. Note that equation \eqref{a.tight} is satisfied in most ecological models as long as  intraspecific competition is sufficiently strong.
\end{rmk}
\begin{asp}\label{a.extn2}
Suppose that there is $\delta_1>0$ such that
$$
\lim\limits_{\|\bx\|\to\infty} \dfrac{\|\bx\|^{\delta_1}\sum_{i=1}^n g_i^2(\bx)}{1+\sum_{i=1}^n(|f_i(\bx)|+|g_i(\bx)|^2)}=0.
$$
\end{asp}

\begin{rmk}
Assumption \ref{a.extn2} forces the growth rates of $g_i^2(\cdot)$ to be slightly lower than those of $|f_i(\cdot)|$.
This is needed in order to suppress the diffusion part so that
we can obtain the tightness of certain occupation measures.
\end{rmk}

\section{Main Results}\label{s:main}
Under assumption \eqref{a.nonde} one can use the proof from Lemma 3.1 in \cite{HN16} to show the following.
\begin{lm}\label{lm2.0}
Suppose Assumption \ref{a.nonde} holds. Then, for any $\bx\in\R^n_+$ there exists a pathwise unique strong solution $(\BX(t))$ to \eqref{e:system}
with initial value $\BX(0)=\bx$.
The solution $(\BX(t))$ with initial value $\bx(0)=\bx\in \R^{I,\circ}_+$ will stay forever in $\R^{I,\circ}_+$ with probability 1. Moreover, $\BX(t)$ is a Feller process on $\R_+^n$.
\end{lm}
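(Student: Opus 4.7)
The plan is to follow the standard Lyapunov--localization template for SDEs with locally Lipschitz coefficients, mirroring the argument of Lemma 3.1 in \cite{HN16}. By Assumption \ref{a.nonde}(2) the coefficients $x_i f_i(\bx)$ and $x_i g_i(\bx)$ are locally Lipschitz on $\R^n_+$, so a standard truncation/Picard argument produces, for each starting point $\bx$, a pathwise unique strong solution $\BX(t)$ defined on $[0,\tau_e)$, where $\tau_e := \lim_{k\to\infty} \tau_k$ with $\tau_k := \inf\{t \geq 0 : \|\BX(t)\| \geq k\}$. What remains is to rule out explosion, to verify invariance of the coordinate faces, and to establish the Feller property.

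For non-explosion, the natural Lyapunov function is
\[
V(\bx) := \log\!\left(1 + \sum_{i=1}^n c_i x_i\right),
\]
with $\bc = (c_1,\dots,c_n)$ furnished by Assumption \ref{a.nonde}(3). A direct It\^o computation, using $d\langle E_i,E_j\rangle_t = \sigma_{ij}\,dt$, gives
\[
\mathcal{L}V(\bx) = \dfrac{\sum_{i=1}^n c_i x_i f_i(\bx)}{1+\sum_{i=1}^n c_i x_i} - \dfrac{1}{2}\dfrac{\sum_{i,j=1}^n \sigma_{ij} c_i c_j x_i x_j g_i(\bx) g_j(\bx)}{(1+\sum_{i=1}^n c_i x_i)^2},
\]
which is exactly the combination controlled by \eqref{a.tight}. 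Consequently there is a constant $K$ such that $\mathcal{L}V(\bx) \leq K$ on all of $\R^n_+$, and in fact $\mathcal{L}V \leq -\gamma_b(1 + \sum_i(|f_i| + g_i^2))$ outside some compact set. Applying Dynkin's formula up to $t \wedge \tau_k$ yields $\E_\bx V(\BX(t \wedge \tau_k)) \leq V(\bx) + K t$, and since $V(\BX(\tau_k)) \geq \log(1 + (\min_i c_i)\, k/\sqrt{n})$ on $\{\tau_k \leq T\}$, Markov's inequality gives $\PP_\bx(\tau_k \leq T) \to 0$ as $k \to \infty$ for every $T > 0$. Hence $\tau_e = \infty$ almost surely and the solution is global.

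For the invariance of $\R^{I,\circ}_+$, I will exploit the multiplicative structure of \eqref{e:system}. Since no coordinate explodes, each component admits the explicit exponential representation
\[
X_i(t) = X_i(0)\exp\!\left(\int_0^t\!\!\left(f_i(\BX(s)) - \tfrac{1}{2}\sigma_{ii}g_i^2(\BX(s))\right) ds + \int_0^t g_i(\BX(s))\, dE_i(s)\right).
\]
The exponent is a.s.\ finite on $[0,t]$, so $X_i(t) > 0$ iff $X_i(0) > 0$; this gives invariance of every face. The Feller property then follows by standard means: locally Lipschitz coefficients give Gr\"onwall-type estimates on the truncated systems, continuous dependence of $\BX(t \wedge \tau_k)$ on $\bx$ in probability follows, and the Lyapunov bound rules out mass escape to infinity, so that $\bx \mapsto \E_\bx \varphi(\BX(t))$ is continuous for every $\varphi \in C_b(\R^n_+)$.

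The only nontrivial step is recognizing that the generator of $V$ matches, to the letter, the expression regulated by \eqref{a.tight}; the slack term $\gamma_b(1 + \sum_i(|f_i| + g_i^2))$ in the assumption is tailored precisely so that $\mathcal{L}V$ is bounded above globally and so that the supermartingale inequality used to pass from the localized to the global estimate survives. Once this cancellation is identified, every other step is routine SDE theory.
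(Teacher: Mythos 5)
Your proof is correct and follows precisely the route the paper intends: the paper's entire "proof" of this lemma is the one-line referral to Lemma~3.1 of \cite{HN16}, whose argument is exactly your Lyapunov/localization scheme with $V(\bx)=\log(1+\bc^\top\bx)$, the exponential representation for face invariance, and the standard Gr\"onwall/localization route to the Feller property. The identification of $\mathcal{L}V$ with the first two terms of \eqref{a.tight} (with the remaining $\gamma_b$-term as slack) is the key observation, and you have it.
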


One can associate to the Markov process $\BX(t)$
the semigroup $(P_t)_{t\geq 0}$ defined by its action on bounded Borel
measurable functions $h:\R_+^n\to\R$
\[
P_th(\bx)=\E_\bx[h(\BX(t))], t\geq 0, \bx\in \R_+^n.
\]
The operator $P_t$ can be seen to act by duality on
Borel probability measures $\mu$ by $\mu\to \mu P_t$ where $\mu
P_t$ is the probability measure given by
\[
\int_{\R^3_+}h(\bx) (\mu P_t)(d\bx):=\int_{\R^3_+}P_th(\bx) \mu(d\bx)
\]
for all $h\in C_b(\R_+^3)$.
\begin{deff}
A probability measure $\mu$ on $\R_+^3$ is called  \emph{invariant} if
$P_t\mu=\mu$ for all $t\geq 0$. The invariant probability
measure $\mu$ is called  \emph{ergodic} if it cannot be written as a
nontrivial convex combination of invariant probability measures.
\end{deff}

We are interested in understanding the asymptotic, statistical behavior of $\BX$. To this end, we define the normalized random occupation measures
\[
\Pi_t(\cdot):=\dfrac1t\int_0^t\1_{\{\BX(s)\in\cdot\}}ds \mbox{ for all }t>0\]
where $\1_{A}$ is the indicator function which takes the value $1$ on the set $A$ and $0$ on the complement $A^c$.
Denote the weak$^*$-limit set of the family $\left(\Pi_t(\cdot)\right)_{t\geq 1}$ by the random set of probability measures $\U$. These weak$^*$-limit points are almost-surely invariant probability measures for $\BX$ -- see Theorem 9.9 from \cite{EK09} or \cite{HN16}.
For the ergodic invariant probability measures, we make the following definition.

\begin{deff}
For an ergodic invariant probability measure $\mu$ for $\BX$, invariance of the faces of the non-negative cone, meaning that if the process starts in one such subspace then it stays there forever (see Lemma \ref{lm2.0}), implies that there is a unique subset $I\subset\{1,2,\dots,n\}$ such that $\mu(\{\bx\in \R_+^n: x_i>0$ if and only if $i\in I\})=1.$ We define this subset $I$ as the \emph{species support of $\mu$} and denote it as $I_\mu.$ In the special case that $\mu=:\bdelta^*$ is the Dirac measure concentrated at the origin $\0$, $I_\mu=\emptyset$. We denote the set of all ergodic measures by $\M$ and the set of all invariant measures by $\Conv(\M)$.
\end{deff}

For any subset $I\subset \{1,2,3\}$ define
\[
\R_+^{I}=\{(x_1, x_2, x_3)\in\R^3_+: x_i=0\text{ if } i\in I^c\},
\]
$$\R_+^{I,\circ}:=\{(x_1,x_2,x_3)\in\R^3_+: x_i=0\text{ if } i\in I^c\text{ and }x_i>0\text{ if  }x_i\in I\},$$
and $\partial\R_+^{\mu}=\R_+^{I}\setminus \R_+^{I,\circ}$.

Consider any ergodic measure $\mu\in\M$ and assume $\mu\neq \bdelta^*$. Define
$$\R_+^\mu:=\R_+^{I_\mu}=\{(x_1, x_2, x_3)\in\R^3_+: x_i=0\text{ if } i\in I_\mu^c\}.$$
Let
$$\R_+^{\mu,\circ}:=\R_+^{I_\mu,\circ}=\{(x_1,x_2,x_3)\in\R^3_+: x_i=0\text{ if } i\in I_\mu^c\text{ and }x_i>0\text{ if  }x_i\in I_\mu\}$$ and $\partial\R_+^{\mu}:=\R_+^\mu\setminus\R_+^{\mu,\circ}$.
\begin{rmk}
Note that one can show (see \cite{HN16, B18}) that under some natural assumptions the set $\Conv(\M)$ is convex and compact and $\mu$ is ergodic if and only if it cannot be written as a nontrivial convex combinations of invariant probability measures. The ergodic decomposition theorem tells us that any invariant probability measure is a convex combination of ergodic measures. Furthermore, it can be shown that any two ergodic probability measures are either identical or mutually singular and that the topological supports of any mutually singular invariant measures are disjoint. In addition, because the diffusion is non-degenerate and invariant on any subspace $R_+^{I,\circ}$, the topological support of an ergodic measure $\mu$ is  $R_+^{I_\mu,\circ}$. In particular, this implies that $\M$ is finite.
\end{rmk}

For an given initial condition $\by$, we are interested in the probability that an ergodic invariant probability measure $\mu$ characterizes the long-term behavior of $\BX$. With this objective in mind, we make the following definition.

\newcommand{\pmu}{p}
\begin{deff}
Let $\mu$ be an ergodic invariant probability measure for $\BX.$ Define
\[
\pmu_\by(\mu)=\PP_\by\left(\U=\{\mu\} \mbox{ and } \limsup_{t\to\infty}\frac{1}{t}\log X_i(t)<0 \mbox{ for all }i\notin I_\mu \right)
\]
as the probability that the normalized occupation measures converge to $\mu$ and the species not supported by $\mu$ go extinct at an exponential rate. \end{deff}

\begin{rmk} The proofs of our main results also provide upper bounds to $\limsup_{t\to\infty}\frac{1}{t}\log X_i(t)$ almost-surely on the event $\left\{\limsup_{t\to\infty}\frac{1}{t}\log X_i(t)<0\right\}$. \end{rmk}

A case of particular importance is when there is an ergodic invariant probability measure that supports all species and characterizes the long term dynamics for all positive initial conditions. We write $\by\gg 0$ if $y_i>0$ for all $i.$

\begin{deff}
The process $\BX$ is \emph{strongly stochastically persistent} if it has a unique invariant probability measure $\mu$ with $I_\mu=\{1,2,\dots,n\}$ such that $\pmu_\by(\mu)=1$ for all $\by \gg 0$.
\end{deff}

To characterize $\pmu_\by(\cdot)$, we make use of certain Lyapunov exponents associated with the derivative cocycle of \eqref{e:system}. For the directions corresponding to species which are not supported by an ergodic measure, these Lyapunov exponents take on a particularly simple form.

\begin{deff}
For an ergodic probability measure $\mu$ define
\begin{equation}\label{e:lyapunov}
\lambda_i(\mu):=\int_{\R^n_+}\left(f_i(\bx)-\dfrac{\sigma_{ii}g_i^2(\bx)}2\right)\mu(d\bx).
\end{equation}
For $i\notin I_\mu$, $\lambda_i(\mu)$ is an \emph{external Lyapunov exponent}. These external Lyapunov exponents determine the infinitesimal per-capita rate of growth of species not supported by $\mu.$
\end{deff}

For $i\in I_\mu$, the following proposition from \citet{HN16} implies that the average per-capita growth rate of the supported species equals $0$. For these $i$, $\lambda_i(\mu)$ does not correspond to a Lyapunov exponent associated with the derivative cocycle of $\BX$'s dynamics.
\begin{restatable}{prop}{zero}\label{p:zero}
Suppose that Assumptions~\ref{a.nonde}--\ref{a.extn2} hold. If $\mu$ is an ergodic invariant probability measure, then $\lambda_i(\mu)=0$ for all $i\in I_\mu.$
\end{restatable}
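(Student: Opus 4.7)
The plan is to apply It\^o's formula to $\log X_i(t)$ for $i \in I_\mu$, which is legitimate $\PP_\mu$-almost surely since $i \in I_\mu$ forces $X_i(s)>0$ for all $s\ge 0$ by Lemma \ref{lm2.0} applied to the invariant face $\R_+^{I_\mu,\circ}$ supporting $\mu$. This yields
\begin{equation*}
\frac{\log X_i(t) - \log X_i(0)}{t} \;=\; \frac{1}{t}\int_0^t \Bigl(f_i(\BX(s)) - \tfrac{\sigma_{ii}}{2} g_i^2(\BX(s))\Bigr)\,ds \;+\; \frac{1}{t}\int_0^t g_i(\BX(s))\,dE_i(s).
\end{equation*}
I would then pass to the limit $t\to\infty$ under $\PP_\mu$ and identify each term.

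For the Lebesgue integral, the Birkhoff ergodic theorem applied to the stationary, ergodic process $(\BX(t))$ under $\PP_\mu$ gives almost sure convergence of the time average to $\lambda_i(\mu)$, provided $f_i - \tfrac{\sigma_{ii}}{2}g_i^2$ is $\mu$-integrable. Integrability follows from Assumption \ref{a.nonde}(3): the term $\gamma_b(1+\sum_i(|f_i|+g_i^2))$ inside the $\limsup$ forces $\int(|f_i|+g_i^2)\,d\mu <\infty$ once one combines this drift estimate with the standard Foster--Lyapunov argument showing that $\mu$ integrates the associated Lyapunov function (as developed in \cite{HN16}). For the martingale term, Birkhoff again gives $\tfrac{1}{t}\int_0^t g_i^2(\BX(s))\,ds \to \int g_i^2\,d\mu < \infty$ $\PP_\mu$-a.s., so the quadratic variation of the continuous local martingale $M(t):=\int_0^t g_i(\BX(s))\,dE_i(s)$ grows at most linearly in $t$; the strong law of large numbers for continuous local martingales (via Dambis--Dubins--Schwarz) then yields $M(t)/t \to 0$ almost surely.

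For the boundary term $(\log X_i(t)-\log X_i(0))/t$, I would exploit stationarity rather than try to prove $\log X_i \in L^1(\mu)$ directly. Under $\PP_\mu$ the random variable $\log X_i(t)$ has the same law as $\log X_i(0)$ for every $t$, so for any $\eps > 0$,
\begin{equation*}
\PP_\mu\!\left(\frac{|\log X_i(t)|}{t} > \eps\right) \;=\; \PP_\mu\bigl(|\log X_i(0)| > \eps t\bigr) \longrightarrow 0
\end{equation*}
as $t\to\infty$, since $|\log X_i(0)|$ is $\PP_\mu$-a.s.\ finite (again because $X_i(0)>0$ $\mu$-a.s.). Hence $(\log X_i(t)-\log X_i(0))/t \to 0$ in $\PP_\mu$-probability. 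Combining the three limits, the right-hand side converges in probability to $\lambda_i(\mu)$ while the left-hand side converges in probability to $0$; this forces $\lambda_i(\mu)=0$.

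The step I expect to be the main obstacle is not the argument itself, but checking the integrability prerequisites cleanly: namely, justifying that Assumption \ref{a.nonde}(3) transfers into $\mu$-integrability of $f_i$ and $g_i^2$, and confirming that $X_i(t)>0$ $\PP_\mu$-a.s.\ for $i\in I_\mu$ (so that $\log X_i$ is well-defined and It\^o's formula applies without localization artifacts). Both are consequences of the framework set up in the excerpt and in \cite{HN16}, so they amount to invoking existing Foster--Lyapunov estimates rather than new computations; once they are in place, the three-term limit argument sketched above concludes the proof.
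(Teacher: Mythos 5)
Your argument is sound. The paper itself gives no proof here---it simply cites \cite{HN16} (and the closely related Lemma~\ref{l:lyapunov} points to Lemma 5.1 of that paper)---so the real comparison is with the method in that reference, which proceeds by integrating $\mathcal{L}(\log x_i)$ against $\mu$ via truncation and tightness of occupation measures (the same machinery as Lemma~\ref{lm3.2}). Your proof reaches the same conclusion via Birkhoff's ergodic theorem together with the Dambis--Dubins--Schwarz strong law for the martingale part, and it has one genuine improvement in presentation: rather than establishing $\log x_i\in L^1(\mu)$ (which is what a truncation argument ultimately has to do to control the boundary term), you observe that stationarity alone gives $\log X_i(t)/t \to 0$ \emph{in probability}, which suffices because the other side of the identity converges almost surely. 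That sidesteps the only genuinely delicate integrability estimate for $\log x_i$ near the boundary. The remaining integrability prerequisite you flag---that Assumption~\ref{a.nonde}(3) forces $\int(|f_i|+g_i^2)\,d\mu<\infty$---is correct and is exactly the Foster--Lyapunov content of \cite[Lemma 3.3]{HN16}, so invoking it rather than rederiving it is appropriate. One small point worth making explicit: when applying Dambis--Dubins--Schwarz you should split into the cases $\langle M\rangle_\infty=\infty$ (where $W(\langle M\rangle_t)/\langle M\rangle_t\to0$ and $\langle M\rangle_t/t$ converges to a finite limit) and $\langle M\rangle_\infty<\infty$ (where $M(t)$ is a.s.\ bounded), since Birkhoff only gives $\langle M\rangle_t/t\to \sigma_{ii}\int g_i^2\,d\mu$, which could in principle be zero. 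With that small addition your argument is complete.
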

The next two propositions describe previous results for $n=1$ and $n=2$ species that follow from \citet{HN16}.\\

\begin{prop}~\label{prop:1d}
Assume $n=1$ and Assumptions~\ref{a.nonde}--\ref{a.extn2} hold.  If $\lambda_1(\bdelta^*)>0$, then $\BX$ is strongly, stochastically persistent. If $\lambda_1(\bdelta^*)<0$, then $\pmu_\by(\bdelta^*)=1$ for all $\by \gg 0$.\\
\end{prop}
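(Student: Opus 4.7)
The plan is to reduce both assertions to the one-dimensional Itô equation for $\log X_1$. Applying Itô's formula to \eqref{e:system} with $n=1$ yields
\begin{equation*}
\frac{1}{t}\log X_1(t) - \frac{1}{t}\log X_1(0) = \frac{1}{t}\int_0^t\!\!\left(f_1(X_1(s))-\frac{\sigma_{11}\,g_1^2(X_1(s))}{2}\right)ds + \frac{1}{t}\int_0^t g_1(X_1(s))\,dE_1(s).
\end{equation*}
The stochastic integral is a local martingale whose quadratic variation is $\sigma_{11}\int_0^t g_1^2(X_1(s))\,ds$; using Assumption \ref{a.nonde}(3) and Assumption \ref{a.extn2} to control $g_1^2$ by the drift $|f_1|$ plus constants, the strong law for martingales shows that $\frac{1}{t}\int_0^t g_1(X_1(s))\,dE_1(s)\to 0$ almost surely. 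So the asymptotic behavior of $\frac{1}{t}\log X_1(t)$ is governed entirely by the time average of $f_1-\tfrac{1}{2}\sigma_{11}g_1^2$.

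For the extinction case $\lambda_1(\bdelta^*)<0$, I would fix $\varepsilon>0$ with $\lambda_1(\bdelta^*)+\varepsilon<0$ and use continuity of $f_1,g_1$ at $0$ together with a stochastic Lyapunov function $V(x)=x^p$ for a sufficiently small $p>0$. A direct computation shows $\mathcal{L}V(x)\le (p\lambda_1(\bdelta^*)+O(p^2))\,V(x)$ in a neighborhood of $0$, and combined with the boundedness coming from Assumption \ref{a.nonde}(3), this yields exponential decay of $\E_\by V(X_1(t))$. Translating back through the log equation gives $\limsup_{t\to\infty}\frac{1}{t}\log X_1(t)\le \lambda_1(\bdelta^*)/2<0$ almost surely, so $X_1(t)\to 0$ exponentially fast; weak$^*$-convergence $\Pi_t\Rightarrow \bdelta^*$ follows immediately, giving $p_\by(\bdelta^*)=1$ for every $\by\gg 0$.

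For the persistence case $\lambda_1(\bdelta^*)>0$, Assumption \ref{a.nonde}(3) gives tightness of $\{\Pi_t\}_{t\ge 1}$ so weak$^*$-limit points exist and are invariant. The non-degeneracy from Assumption \ref{a.nonde}(1) makes the diffusion strongly Feller and irreducible on $(0,\infty)$, so any invariant measure supported in $(0,\infty)$ is unique. The only task left is to rule out $\bdelta^*$ as a weak$^*$-limit point: if some $\mu'\in\U$ satisfies $\mu'(\{0\})>0$, then since $\bdelta^*$ is ergodic and $\mu'$ must be a convex combination of ergodic measures, $\bdelta^*$ itself would lie in $\U$ with positive probability. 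Evaluating the time-averaged drift along such a realization would force $\frac{1}{t}\log X_1(t)\to \lambda_1(\bdelta^*)>0$, contradicting $X_1(t)\to 0$. Hence the unique limit $\mu$ has $I_\mu=\{1\}$, and the ergodic theorem delivers $p_\by(\mu)=1$ for $\by\gg 0$, i.e., strong stochastic persistence.

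The main obstacle in this program is the extinction half: controlling the stochastic integral so that it does not produce anomalously large excursions that violate the exponential decay estimate. This is exactly where Assumption \ref{a.extn2} is needed to dominate $g_1^2$ by $|f_1|$ at infinity, ensuring both that the Lyapunov computation closes and that the local martingale has sublinear quadratic variation on the time-averaged scale. All of this is carried out in detail in \citet{HN16} and the assertions follow by specializing those arguments to $n=1$.
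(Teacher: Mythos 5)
Your proposal broadly follows \citet{HN16}, which is what the paper cites for this result without giving details, and the persistence half of your argument (tightness of $\Pi_t$, uniqueness of the interior invariant measure by non-degeneracy, ruling out $\bdelta^*$ as a weak$^*$-limit point) is on the right track and matches that reference.

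However, the extinction half has a genuine gap. Assumption~\ref{a.nonde}(3) only forces $f_1(x)$ to be negative for large $x$, not for all $x>0$, so the local Lyapunov estimate $\mathcal{L}V(x)\le (p\lambda_1(\bdelta^*)+O(p^2))V(x)$ for $V(x)=x^p$ holds only in a neighborhood of $0$; on the remaining compact part of the state space, where $f_1$ may be strongly positive, one can have $\mathcal{L}V(x)>0$. The claimed ``exponential decay of $\E_\by V(X_1(t))$'' therefore does not follow from the ingredients you list: the process can make repeated excursions into the interior before eventually dying out, so $\E_\by[X_1(t)^p]$ need not decay exponentially in $t$. What \citet{HN16} (mirrored in Section~\ref{s:rock-paper-scissors} of this paper for the rock-paper-scissors extinction case) actually proves is a supermartingale property for a \emph{truncated} Lyapunov function $\tilde U^\theta = U^\theta\wedge\varsigma$ sampled along a subsequence of times, followed by the supermartingale maximal inequality to control escape probabilities, and a bootstrap to transience toward the boundary. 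For $n=1$ specifically you could also bypass all of this with Feller's boundary test: $\lambda_1(\bdelta^*)<0$ makes $0$ an attracting boundary while the drift condition makes $\infty$ non-attracting, so $X_1(t)\to 0$ almost surely, and Ces\`{a}ro averaging of the drift term in the It\^{o} identity for $\log X_1$ then gives $t^{-1}\log X_1(t)\to\lambda_1(\bdelta^*)<0$ a.s.
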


Proposition~\ref{prop:1d} highlights that when the external Lyaponov exponent $\lambda_1(\bdelta^*)$ is non-zero, strong conclusions can be drawn about the long-term statistical behavior of \eqref{e:system}. All of our results rely on the following generalization of this assumption. \begin{asp}\label{a:external} For every ergodic invariant probability measure $\mu$, the external Lyapunov exponents are non-zero i.e. $\lambda_i(\mu)\neq 0$ for $i\notin I_\mu$. \end{asp} As we show later, Assumption~\ref{a:external} holds generically for \eqref{e:system} in the sense that there exist arbitrarily small perturbations of the per-capita growth rate functions $f_i$ such that this assumption holds, see Theorem~\ref{thm:generic} which holds for any dimension $n$.

\begin{prop}~\label{prop:2d}
Suppose that $n=2$, and Assumptions~\ref{a.nonde}, \ref{a.extn2} and \ref{a:external} hold. Then exactly one of the following four conclusions holds:
\begin{enumerate}
	\item $\pmu_\by(\bdelta^*)=1$ for all $\by\gg 0,$
	\item there exists an ergodic invariant probability measure $\mu$ such that $|I_\mu|=1$ and $\pmu_\by(\mu)=1$ for all $\by\gg 0$,
	\item there exist ergodic invariant probability measures $\mu_1,\mu_2$ such that $I_{\mu_i}=\{i\}$, $\prod_i \pmu_\by(\mu_i)>0$, and $\sum_i \pmu_\by(\mu_i)=1$ for all $\by\gg0$, or
	\item there exists an ergodic invariant probability measure $\mu$ such that $I_\mu=\{1,2\}$ and $\pmu_\by(\mu)=1$ for all $\by\gg 0$
\end{enumerate}
\end{prop}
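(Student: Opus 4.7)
The plan is to follow the approach of \cite{HN16} and perform case analysis on the signs of the external Lyapunov exponents at the ergodic invariant measures supported on $\partial\R_+^2$. First, enumerate the boundary ergodic measures: $\bdelta^*$ is always invariant, and the restriction of \eqref{e:system} to each coordinate axis is a one-dimensional Kolmogorov SDE to which Proposition~\ref{prop:1d} applies. Thus each axis supports a unique ergodic invariant probability measure $\mu_i$ with $I_{\mu_i}=\{i\}$ if and only if $\lambda_i(\bdelta^*)>0$, and otherwise $\bdelta^*$ is the only invariant measure on that axis. Combined with the non-degeneracy from Assumption~\ref{a.nonde}(1), this enumerates $\M$ up to possibly one interior measure, and Assumption~\ref{a:external} makes every exponent $\lambda_j(\mu)$ with $j\notin I_\mu$ nonzero.

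The key dichotomy is a mutual invasibility condition. If at every boundary ergodic measure $\mu\in\M$ there exists a missing species $i\notin I_\mu$ with $\lambda_i(\mu)>0$, then the stochastic persistence machinery of \cite{HN16, SBA11} produces a unique ergodic measure $\pi$ on $\R_+^{2,\circ}$ with $\pmu_\by(\pi)=1$ for all $\by\gg 0$; this is conclusion (4). Otherwise there exists $\mu\in\M$ with $\lambda_i(\mu)<0$ for every $i\notin I_\mu$. A Foster--Lyapunov argument using test functions of the form $V(\BX)=\prod_{i\notin I_\mu}X_i^{\eta_i}$ for sufficiently small $\eta_i>0$, combined with the identity $t^{-1}\log X_i(t)=t^{-1}\!\int_0^t(f_i-\sigma_{ii}g_i^2/2)\,ds+o(1)$ and Birkhoff's ergodic theorem for trajectories with $\Pi_t\to\mu$, shows that with positive probability the missing species decay exponentially and $\Pi_t\to\mu$ weakly.

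To separate conclusions (1)--(3), inspect the sign patterns of $\lambda_1(\bdelta^*),\lambda_2(\bdelta^*)$ and, where defined, $\lambda_2(\mu_1),\lambda_1(\mu_2)$. Conclusion (1) corresponds to $\lambda_1(\bdelta^*),\lambda_2(\bdelta^*)<0$, so $\M=\{\bdelta^*\}$ and $\bdelta^*$ is the unique attractor. Conclusion (2) corresponds to exactly one axial measure, say $\mu_1$, being attracting in the 2D sense (i.e.\ $\lambda_1(\bdelta^*)>0$ and $\lambda_2(\mu_1)<0$), while either $\mu_2$ does not exist or $\mu_2$ exists with $\lambda_1(\mu_2)>0$ so that species~1 invades $\mu_2$ and $\mu_1$ is the only attractor. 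Conclusion (3) corresponds to both $\mu_1,\mu_2$ existing with $\lambda_2(\mu_1)<0$ and $\lambda_1(\mu_2)<0$, so both axial measures are attracting.

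The main technical obstacle is conclusion (3): showing that both basin probabilities $\pmu_\by(\mu_1),\pmu_\by(\mu_2)$ are strictly positive from any $\by\gg 0$ and that they sum to one. Positivity relies on Assumption~\ref{a.nonde}(1), which via a Stroock--Varadhan support argument guarantees that the diffusion enters any prescribed neighborhood of either axis with positive probability in finite time; once inside such a neighborhood, the Foster--Lyapunov decay above locks the trajectory into the corresponding basin. That the probabilities sum to one uses Assumption~\ref{a.nonde}(3) to rule out escape to infinity and applies the persistence argument contrapositively to exclude any interior ergodic measure in this sign regime. Assumption~\ref{a.extn2} is the ingredient used in \cite{HN16} to force the diffusion part to be dominated by the drift at infinity, which is what permits the tightness of occupation measures near the boundary and the identification of the weak$^*$-limit of $\Pi_t$ with the correct axial measure.
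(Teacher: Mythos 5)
The paper does not give a direct proof of Proposition~\ref{prop:2d}; it attributes the result to \cite{HN16} as previously established. Your reconstruction follows the same route as \cite{HN16} and the machinery in Section~\ref{s:app2}: enumerating axial ergodic measures via the one-dimensional result, invoking the Hofbauer-type weighted persistence criterion for conclusion~(4), and using Foster--Lyapunov decay near transversally attracting boundary measures together with a support-theorem/accessibility argument for conclusions~(1)--(3), so the approach is essentially the same as the paper's.
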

\begin{rmk} Propositions~\ref{prop:1d}--\ref{prop:2d} imply that each $\le 2$ dimensional face of $\R^3_+$ supports at most one ergodic invariant probability measure and characterizes the existence of the ergodic measures with the external Lyapunov exponents. \end{rmk}

\begin{rmk}\label{rmk:2d} The four possible outcomes in Proposition~\ref{prop:2d} can be characterized in terms of the external Lyapunov exponents. Case (1) occurs if and only if $\max_i\lambda_i(\bdelta^*)<0$. Case (2) occurs if and only if there exists $i$ and $j\neq i$ such that $\lambda_i(\bdelta^*)>0$, $\lambda_i(\mu_j)<0$ where $I_{\mu_j}=\{j\}$, and either $\lambda_i(\bdelta^*)<0$ or $\lambda_i(\bdelta^*)>0$, $\lambda_j(\mu_i)>0$ where $I_{\mu_i}=\{i\}$. Case (3) occurs if and only if $\lambda_i(\bdelta^*)>0$ for $i=1,2$, and $\lambda_j(\mu_i)<0$ for all $i\neq j$ where $I_{\mu_j}=\{j\}$. Case (4) occurs if and only if there exists $i$ and $j\neq i$ such that $\lambda_i(\bdelta^*)>0$, $\lambda_j(\mu_i)>0$, and either $\lambda_j(\bdelta^*)<0$ or $\lambda_j(\bdelta^*)>0$,$\lambda_i(\mu_j)>0$ where $I_{\mu_j}=\{j\}$.
\end{rmk}

When $n=3$ species, Propositions~\ref{prop:1d}--\ref{prop:2d} characterize the asymptotic behavior of $\BX$ restricted to the one and two-dimensional faces of $\R^3_+.$ To understand the asymptotic behavior of $\BX$ for $\BX(0)\gg 0$, we need to isolate one special form of  $\BX$'s dynamic: the rock-paper-scissors dynamic. This is a type of dynamics where the first species seems to win, grows to significant levels while the other two species have negligible densities. Then species $2$ outcompetes species $1$ and seems to win. After that happens the density of species $2$ decreases and the density of species $3$ increases. Finally, species $1$ wins against species $3$, its density increases and that of species $3$ decreases. Mathematically this scenario corresponds to a stochastic analog of a heteroclinic cycle. An example of an ecosystem with this dynamics is the one including the side-blotched lizard \citep{SL96}. In this ecosystem there are three different types of lizards. The first type is a highly aggressive lizard that attempts to control a large area and mate with any females within the area. The second type is a furtive lizard, which wins against the aggressive lizard by acting like a female. This way the furtive lizard can mate without being detected in an aggressive lizard’s territory. The third type is a guarding lizard that watches one specific female for mating. This prevents the furtive lizard from mating. However, the guarding lizard is not strong enough to overcome the aggressive lizard. This type of dynamics creates regimes where one species seems to win, until the species that beats it makes a comeback. This creates subtle technical problems which we resolve in our proofs.

\begin{deff}\label{deff:rps}  For $n=3$, $\BX$ is a \emph{rock-paper-scissor system} if $\lambda_i(\bdelta^*)>0$ for all $i$, and either
\[
\mbox{\bf(a)} \min\{\lambda_1(\mu_2),\lambda_2(\mu_3),\lambda_3(\mu_1)\}>0>\max\{\lambda_1(\mu_3),\lambda_2(\mu_1),\lambda_3(\mu_2)\}\] or
\[
\mbox{\bf(b)} \max\{\lambda_1(\mu_2),\lambda_2(\mu_3),\lambda_3(\mu_1)\}<0<\min\{\lambda_1(\mu_3),\lambda_2(\mu_1),\lambda_3(\mu_2)\}\]
where $\mu_i$ are the unique, ergodic invariant probability measures satisfying $I_{\mu_i}=\{i\}.$
\end{deff}

\begin{rmk}
Note that if $\lambda_i(\bdelta^*)>0$ then by Proposition \ref{prop:2d} for every $i\in\{1,2,3\}$ there exists a unique ergodic measure $\mu_i$ with $I_{\mu_i}={i}$.
\end{rmk}

The following theorem characterizes, generically, the asymptotic behavior of $\BX$ for $\BX(0)\gg0$ for rock-paper-scissor systems.
\begin{restatable}{thm}{rps}\label{thm:rps}
Assume $n=3$, $\BX$ is a rock-paper-scissor system of type (a), and Assumptions~\ref{a.nonde}--\ref{a.extn2} hold. If
\begin{equation}\label{eq:rps}
\lambda_1(\mu_2)\lambda_2(\mu_3)\lambda_3(\mu_1)+\lambda_1(\mu_3)\lambda_2(\mu_1)\lambda_3(\mu_2)>0,
\end{equation}
then $\BX$ is strongly stochastically persistent. Moreover, if $\mu$ is the ergodic measure such that $\pmu_\by(\mu)=1$ for all $\by\gg0$, then
\begin{equation}\label{e:tv}
\lim\limits_{t\to\infty} \|\PP_\by(\BX(t)\in\cdot)-\pi^*(\cdot)\|_{\text{TV}}=0 \mbox{ for all }\by\gg 0
\end{equation}
where $\|\cdot,\cdot\|_{\text{TV}}$ is the total variation norm.

Alternatively, if the inequality in \eqref{eq:rps} is reversed then
\begin{equation}\label{eq:rps2}
\PP_\by \left(\U \subset \Conv(\{\mu_1,\mu_2,\mu_3\})\mbox{ and }\limsup_{t\to\infty}\frac{1}{t}\log\min_i X_i(t)<0 \right)=1 \mbox{ for all }\by\gg 0
\end{equation}
where $\Conv(\{\mu_1,\mu_2,\mu_3\})$ denotes the convex hull of the probability measures $\{\mu_1,\mu_2,\mu_3\}.$
\end{restatable}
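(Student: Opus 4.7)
My plan is to analyze the boundary dynamics via a weighted log-Lyapunov function $V(\bx)=-\sum_{i=1}^3 c_i\log x_i$ with carefully chosen positive weights. Applying It\^o's formula to $V(\BX(t))$ and averaging against an ergodic boundary measure $\mu$, the drift term evaluates to $-\sum_i c_i\lambda_i(\mu)$, so persistence reduces to choosing $c_1,c_2,c_3>0$ making $\sum_i c_i\lambda_i(\mu)>0$ at every ergodic boundary measure. By Propositions~\ref{prop:1d}--\ref{prop:2d} and the rock-paper-scissor assumption, the sign pattern of the external exponents on each two-dimensional face forces that face to be dominated by a single-species measure, so the only ergodic boundary measures are $\bdelta^*,\mu_1,\mu_2,\mu_3$. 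At $\bdelta^*$ the inequality holds automatically since $\lambda_i(\bdelta^*)>0$ for all $i$. At each $\mu_j$ one has $\lambda_j(\mu_j)=0$ by Proposition~\ref{p:zero}, while the remaining two external exponents have opposite signs given by type (a); writing the three resulting cyclic inequalities on the ratios $c_3/c_2,\ c_1/c_3,\ c_2/c_1$ and multiplying them telescopes to exactly condition \eqref{eq:rps}.

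For the persistence case (condition \eqref{eq:rps} holds), the existence of such weights $c_i$ provides a uniform lower bound $\sum_i c_i\lambda_i(\mu)\ge\kappa>0$ over all ergodic boundary measures, which is the hypothesis of the general stochastic persistence machinery developed in \citet{HN16,HN18}. Assumption~\ref{a.nonde}(3) supplies the Lyapunov function needed to tighten the occupation measures at infinity, while Assumption~\ref{a.extn2} controls the diffusion contribution so that boundary occupation-measure limits remain invariant. Together these yield an invariant probability measure $\pi^*$ supported in $\R_+^{3,\circ}$ that attracts the occupation measures from every positive initial condition. To upgrade this to the total variation convergence \eqref{e:tv}, I would use that the diffusion is non-degenerate on $\R_+^{3,\circ}$ by Assumption~\ref{a.nonde}(1), which gives the strong Feller property and topological irreducibility; combined with the Foster--Lyapunov drift estimate already built from $V$, this yields Harris recurrence with geometric rate, hence uniqueness of $\pi^*$ and exponential $\|\cdot\|_{\text{TV}}$ convergence.

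For the extinction case (reversed inequality in \eqref{eq:rps}), LP duality asserts precisely the existence of positive weights $c_i$ satisfying $\sum_i c_i\lambda_i(\mu_j)<0$ for every $j=1,2,3$ simultaneously, so the same $V$ now becomes a supermartingale (as a function of time) in appropriate patches near each vertex $\mu_j$. I would partition a neighborhood of the boundary cycle into three regions, one around each one-dimensional face $\R_+^{\{j\}}$, and track $V(\BX(t))$ across successive visits of the process to the same region. The mean contraction per full cycle is governed by the ratio $-\lambda_1(\mu_3)\lambda_2(\mu_1)\lambda_3(\mu_2)/\bigl(\lambda_1(\mu_2)\lambda_2(\mu_3)\lambda_3(\mu_1)\bigr)>1$. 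Because $\lambda_i(\bdelta^*)>0$ for every $i$, excursions to $\bdelta^*$ are suppressed, so weak$^*$-limits of the occupation measures cannot charge $\bdelta^*$; combined with the fact that each two-dimensional face carries only a single-species ergodic measure, any weak$^*$-limit must be supported on $\R_+^{\{1\}}\cup\R_+^{\{2\}}\cup\R_+^{\{3\}}$, i.e.\ lies in $\Conv(\{\mu_1,\mu_2,\mu_3\})$. The quantitative contraction of $V$ per cycle then yields the exponential decay $\limsup_{t\to\infty}\tfrac1t\log\min_i X_i(t)<0$.

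The hard part will be making the cycle-contraction argument in the extinction case rigorous. Unlike the deterministic Hofbauer--Sigmund analysis, stochastic fluctuations make the sojourn times near each vertex random, individual excursions can temporarily increase $V$, and transitions between patches can be spurious. The core technical task is to estimate these random passage times and show that the stochastic correction to the mean drift of $V$ across a cycle is dominated by the mean contraction, via stopping-time arguments and a martingale concentration bound along excursions, using the non-degeneracy on each face to guarantee that the process leaves every vertex neighborhood in finite time with controlled tails. This cycle-by-cycle control, rather than the LP bookkeeping, is the real substance of Theorem~\ref{thm:rps}.
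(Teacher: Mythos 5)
Your persistence case tracks the paper's proof almost exactly: the paper's Lemma~\ref{l:ineq} is precisely your telescoping of the three cyclic ratio inequalities, and the paper then invokes its Theorem~\ref{t:pers2} (i.e.\ Theorem~1.1 of \cite{HN16}) once $\sum_i p_i\lambda_i(\mu)>0$ is established for every boundary ergodic measure, which combined with the nondegeneracy gives TV convergence. You have the right structure here.

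Your extinction case, however, proposes a genuinely different route from the paper — and one that is substantially harder to execute. You envisage a Hofbauer--Sigmund style cycle analysis: partition a neighborhood of the heteroclinic set into patches near each vertex, track $V(\BX(t))$ excursion by excursion, estimate random sojourn times, and bound the stochastic correction to the per-cycle contraction with martingale concentration. You correctly flag this as ``the real substance'' and ``the hard part.'' The paper sidesteps exactly that difficulty. It does not track cycles at all. Instead it builds a single global Lyapunov function
\[
U(\bx)=\frac{(1+\bc^\top\bx)\,x_1^{p_1}x_2^{p_2}x_3^{p_3}}{(x_1+x_2+x_3)^{p_0}},
\]
where the numerator captures the boundary contraction encoded by $\sum_i p_i\lambda_i(\mu_j)<0$ for $j=1,2,3$, the $(x_1+x_2+x_3)^{-p_0}$ factor handles the origin repulsion via $\min_i\lambda_i(\bdelta^*)>0$, and the $(1+\bc^\top\bx)$ factor supplies moment control at infinity through \eqref{a.tight}. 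It then proves (Proposition~\ref{prop4.1}, proved exactly like the persistence-side Proposition~\ref{prop2.1} via log-Laplace/Taylor estimates on the additive functional in It\^o's formula) a \emph{uniform, time-averaged} contraction $\E_\bx U^\theta(\BX(T))\leq e^{-0.5\theta\rho^*T}U^\theta(\bx)$ for $T\in[T^e,n^eT^e]$ and all $\bx$ near the boundary, regardless of which vertex $\bx$ is near. This uniformity is the key: it dissolves the cycle bookkeeping entirely. The remaining stopping-time argument (the $\tau$, $\xi$ cases in \eqref{et3.3}--\eqref{et3.8}) is just gluing the large-$\|\bx\|$ and small-$\|\bx\|$ regimes, not tracking passage through vertex neighborhoods. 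A truncated $U^\theta$ is then a supermartingale along a skeleton chain, which gives transience of the interior and hence \eqref{eq:rps2}; Lemma~\ref{lm4.8} separately rules out mass at $\bdelta^*$ in any weak$^*$-limit of occupation measures via a drift computation on $\ln(X_1+X_2+X_3)$. So: your plan is not wrong, but it commits you to precisely the random-passage-time estimates the paper's choice of $U$ is designed to avoid; the time-averaged, globally-valid drift estimate is the idea you are missing, and it is what makes the extinction half of the theorem tractable. Also note that your bare $V(\bx)=-\sum_i c_i\log x_i$ would have to be augmented by an origin factor and an at-infinity factor to even state a clean drift inequality; the paper's $U$ does this in one stroke.
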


The following theorem characterizes, generically, strong stochastic persistence for $n=3$ for non-rock-paper-scissor systems.
\begin{restatable}{thm}{pers}\label{thm:pers}
Assume $n=3$, $\BX$ is not a rock-paper-scissor system, and Assumptions ~\ref{a.nonde}, \ref{a.extn2}, and \ref{a:external} hold. Then
$\BX$ is strongly stochastically persistent if and  only if $\max_i \lambda_i(\mu)>0$ for all ergodic $\mu$ with $|I_\mu|\le 2.$ Moreover, if $\mu$ is the ergodic measure such that $\pmu_\by(\mu)=1$ for all $\by\gg0$, then \eqref{e:tv} holds.
\end{restatable}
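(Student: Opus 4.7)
The plan is to prove both directions of the equivalence separately and then derive the total variation convergence from the Lyapunov structure built along the way. The crux of the argument is to construct a weighted log-Lyapunov function whose drift is strictly negative near $\partial \R^3_+$, and the existence of such weights is exactly what will require the non-rock-paper-scissors hypothesis.

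\textbf{Necessity.} I would argue by contrapositive. Assume that there exists a boundary ergodic measure $\mu$ with $|I_\mu| \le 2$ and $\max_i \lambda_i(\mu) \le 0$. By Assumption~\ref{a:external}, $\lambda_i(\mu) < 0$ for every $i \notin I_\mu$. Consider the function $W(\bx) = -\sum_{i \notin I_\mu} \log x_i$, whose asymptotic drift at $\mu$ equals $-\sum_{i \notin I_\mu}\lambda_i(\mu) > 0$. Combining the local ergodic analysis of the subsystem on $\R_+^{\mu,\circ}$ (non-degenerate by Assumption~\ref{a.nonde}(1), with the zero-at-support identity of Proposition~\ref{p:zero}) with a standard localization-and-coupling estimate as in \cite{HN16}, one concludes that from some initial condition $\by \gg 0$ arbitrarily close to $\R_+^\mu$ the occupation measures converge to $\mu$ with positive probability, contradicting strong stochastic persistence.

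\textbf{Sufficiency.} The aim is to produce $p = (p_1,p_2,p_3) \in (0,\infty)^3$ such that
\begin{equation}\label{eq:weights}
\sum_{i \notin I_\mu} p_i \lambda_i(\mu) > 0 \quad \text{for every boundary ergodic measure } \mu \text{ with } |I_\mu| \le 2.
\end{equation}
Given such $p$, truncating $V(\bx) = -\sum_i p_i \log x_i$ near the boundary and pairing it with the moment bound in Assumption~\ref{a.nonde}(3) yields a Lyapunov function whose expected drift is uniformly negative on a deleted neighborhood of $\partial \R^3_+$. A standard abstract stochastic persistence theorem (adapted from \cite{HN16, B18}) then provides a unique interior invariant measure $\pi^*$ supported on $\R_+^{3,\circ}$ with $\pmu_\by(\pi^*) = 1$ for all $\by \gg 0$.

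\textbf{Existence of weights, and the main obstacle.} By linear programming duality (Stiemke's lemma), a positive $p$ satisfying \eqref{eq:weights} exists if and only if no nontrivial nonnegative combination $\{\alpha_\mu\}$ of boundary ergodic measures yields $\sum_\mu \alpha_\mu \lambda_i(\mu) \le 0$ for every $i \in \{1,2,3\}$, using Proposition~\ref{p:zero} that $\lambda_i(\mu)=0$ on $I_\mu$. The hypothesis $\max_i \lambda_i(\mu) > 0$ rules out single-measure obstructions. Combinations involving $\bdelta^*$ or a two-species ergodic measure are eliminated via Proposition~\ref{prop:2d} and Remark~\ref{rmk:2d}, since invading a two-species measure $\mu$ by the missing species gives the required strict inequality in the corresponding coordinate. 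The only remaining obstruction is a nonnegative combination of $\mu_1,\mu_2,\mu_3$ whose invasion signs $\lambda_i(\mu_j)$ form a cycle; multiplying out the three ratio constraints obtained from the $\ge 0$ system, this obstruction is feasible precisely when the reversed-sign version of \eqref{eq:rps} holds, i.e., exactly in the rock-paper-scissors case of Definition~\ref{deff:rps}. Since this has been excluded, \eqref{eq:weights} is feasible, and explicit weights can be exhibited in each of the finitely many regimes; the case enumeration is carried out in Section~\ref{s:results}. Finally, the total variation convergence \eqref{e:tv} follows because Assumption~\ref{a.nonde}(1) yields the strong Feller property and open-set irreducibility on $\R_+^{3,\circ}$ via H\"ormander's theorem, which combined with the Lyapunov drift from $V$ gives exponential convergence of $\PP_\by(\BX(t) \in \cdot)$ to $\pi^*$ in total variation by the standard Meyn-Tweedie argument used in \cite{HN16}.
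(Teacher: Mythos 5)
Your necessity argument and your total-variation claim are in line with the paper, and your Stiemke-duality framing of the weight-existence problem is a reasonable alternative to the paper's explicit case enumeration in Section~\ref{s:results}. But the sufficiency direction has a genuine gap: you assert that once rock-paper-scissors is excluded, the only possible obstruction to finding weights $p_1,p_2,p_3>0$ satisfying \eqref{eq:weights} is a cyclic combination of $\mu_1,\mu_2,\mu_3$, and that ``combinations involving $\bdelta^*$ or a two-species ergodic measure are eliminated via Proposition~\ref{prop:2d}.'' This is false. In the two-prey--one-predator configuration of Section~\ref{s:predprey} — where $\lambda_1(\bdelta^*),\lambda_2(\bdelta^*)>0>\lambda_3(\bdelta^*)$, $\lambda_2(\mu_1),\lambda_1(\mu_2)<0$, $\lambda_3(\mu_1),\lambda_3(\mu_2)>0$, and $\lambda_2(\mu_{13}),\lambda_1(\mu_{23})>0$ — one has $\max_i\lambda_i(\mu)>0$ for every boundary ergodic $\mu$, and the system is not rock-paper-scissors, yet Assumption~\ref{a:hof} can fail: the constraints from $\mu_1,\mu_2$ force $p_3$ to be large relative to $p_1,p_2$, while the constraint from $\bdelta^*$ forces it to be small, and when $|\lambda_3(\bdelta^*)|$ is large relative to $\lambda_3(\mu_1),\lambda_3(\mu_2)$ there is no feasible $p$. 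In Stiemke's dual picture, a nonnegative combination $\alpha_{\bdelta^*}\bdelta^*+\alpha_{\mu_1}\mu_1+\alpha_{\mu_2}\mu_2$ can indeed satisfy $\sum_\mu\alpha_\mu\lambda_i(\mu)\le 0$ for $i=1,2,3$; the invasion conditions $\lambda_2(\mu_{13}),\lambda_1(\mu_{23})>0$ do nothing to rule this out.

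The paper handles exactly this case separately. It observes that Assumption~\ref{a.coexn} (equivalently the Hofbauer condition) ``does not hold,'' so the standard Lyapunov function $V(\bx)=(1+\bc^\top\bx)\prod_i x_i^{-p_i}$ cannot be made to work, and instead builds the modified Lyapunov function $V_{\hat\bp}(\bx)=(1+\bc^\top\bx)/\bigl((x_1+x_2)^{\hat p_0}\prod_i x_i^{\hat p_i}\bigr)$ in \eqref{e:V}. The extra $-\hat p_0\log(x_1+x_2)$ term captures the fact that the combined prey density $X_1+X_2$ grows near the origin (via the estimate \eqref{e2.4} and the choice \eqref{e2.3} of $p_0$), supplying the needed negative drift at $\bdelta^*$ that the decoupled weights $p_1\log x_1 + p_2\log x_2$ alone cannot provide. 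Establishing the drift inequality for this modified function is the content of Lemma~\ref{lm3.2}, Lemma~\ref{lm3.1}, Proposition~\ref{prop2.1}, and Theorem~\ref{thm2.1}, and is not a corollary of the standard persistence theorem. Your proposal, which relies entirely on the existence of a linear weighting $\sum_i p_i\log x_i$, does not cover this case, so it proves the theorem only on the subclass where Assumption~\ref{a:hof} happens to hold. To repair it you would need either to re-derive the $(x_1+x_2)^{-\hat p_0}$-type Lyapunov function for the two-prey--one-predator regime, or to explicitly invoke Theorem~\ref{thm2.1} there as the paper does.

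A smaller point: your remaining reduction — checking that in every non-rock-paper-scissors, non-predator-prey configuration, $\max_i\lambda_i(\mu)>0$ on ergodic measures lifts to all of $\Conv(\M)$ — is not purely a consequence of Stiemke's lemma; it still requires the case-by-case enumeration that the paper performs in Section~\ref{s:results} (or an equivalent combinatorial argument), since the equivalence of $\max_i\lambda_i(\mu)>0$ over ergodics and over $\Conv(\M)$ is precisely what fails in the excluded case.
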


Finally, we characterize what happens $\BX$ is not strongly stochastically persistent and is not a rock-paper-scissor system.
\begin{restatable}{thm}{exclude}\label{thm:exclude}
 Assume $n=3$, $\BX$ is not a rock-paper-scissor system, and Assumptions~\ref{a.nonde}--\ref{a.extn2} and \ref{a:external} hold. If $\BX$ is not stochastically persistent, then there exist ergodic invariant probability measures $\mu^1,\dots,\mu^k$ with $k\le 3 $ such that
\begin{enumerate}
  \item $|I_{\mu^i}|\le 2$ for all $i$,
	\item $I_{\mu^i}\cap I_{\mu^j} \neq I_{\mu^i}$ for all $i\neq j$,
	\item $\prod_{i=1}^k \pmu_\by(\mu^i)>0$ for all $\by\gg0$, and
	\item $\sum_{i=1}^k \pmu_\by(\mu^i)=1$ for all $\by\gg 0.$
\end{enumerate}
\end{restatable}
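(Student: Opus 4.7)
The plan is to identify the collection
\[
\M^- := \bigl\{\mu \in \M : |I_\mu|\le 2 \text{ and } \lambda_i(\mu) < 0 \text{ for every } i \notin I_\mu\bigr\}
\]
of ``attracting'' ergodic boundary measures, show that their species supports form an antichain of size at most three, and argue that $\M^-$ exhaustively describes the long-term behavior of $\BX$ from any interior start. Assumption~\ref{a:external} with Proposition~\ref{p:zero} guarantees the strict inequalities in the definition, and Theorem~\ref{thm:pers} forces $\M^-\neq\varnothing$ whenever $\BX$ is neither strongly stochastically persistent nor a rock-paper-scissors system. To check that $\{I_\mu:\mu\in\M^-\}$ is an antichain, suppose $\mu,\nu\in\M^-$ with $I_\mu\subsetneq I_\nu$: restricting $\BX$ to the invariant face $\R_+^{I_\nu}$ and applying Proposition~\ref{prop:2d} (or Proposition~\ref{prop:1d} if $|I_\nu|=1$) to this lower-dimensional subsystem, the existence of the full-support measure $\nu$ forces Case~(4) of the two-dimensional classification, and a case analysis of the subcases of Remark~\ref{rmk:2d} exhibits some external Lyapunov exponent $\lambda_j(\mu)$ with $j\in I_\nu\setminus I_\mu$ that must be strictly positive, contradicting $\mu\in\M^-$. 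Since antichains in $\mathcal{P}(\{1,2,3\})\setminus\{\{1,2,3\}\}$ have at most three elements, writing $\M^-=\{\mu^1,\dots,\mu^k\}$ yields $k\le 3$ together with conclusions (1) and (2).

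Conclusion (3) will be established by the local Lyapunov-function construction already used in~\cite{HN16} and Section~\ref{s:app1}. Fix $\mu^i\in\M^-$ and, on a small neighborhood $U$ of $\R_+^{\mu^i}$, build $V(\bx)=-\sum_{j\notin I_{\mu^i}}\theta_j\log x_j$ with weights $\theta_j>0$ chosen so that $-\sum_j\theta_j\lambda_j(\mu^i)>0$. Continuity of $\mathcal{L}V$ together with ergodicity of the reduced process on $\R_+^{\mu^i}$ (Propositions~\ref{prop:1d}--\ref{prop:2d}) yields a uniform drift estimate $\mathcal{L}V\ge\gamma>0$ on $U$. Non-degeneracy of the diffusion (Assumption~\ref{a.nonde}(1)) gives accessibility of $U$ from any $\by\gg 0$ with positive probability, and a standard supermartingale argument then delivers exponential extinction of each $x_j(t)$ for $j\notin I_{\mu^i}$ together with weak$^*$ convergence of $\Pi_t$ to $\mu^i$ on an event of positive probability, proving $p_\by(\mu^i)>0$.

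Conclusion (4) -- that these probabilities sum to $1$ -- is the technical heart of the proof. The strategy is a Morse-decomposition analysis of $\U$: combining tightness of $(\Pi_t)$ from Assumption~\ref{a.nonde}(3) with a Birkhoff-type averaging argument applied to $d\log X_j=(f_j(\BX)-\tfrac12\sigma_{jj}g_j^2(\BX))\,dt+g_j(\BX)\,dE_j$ rules out any ergodic measure $\nu$ with $\lambda_j(\nu)>0$ for $j\notin I_\nu$ from appearing in the ergodic decomposition of any $\pi\in\U$ (otherwise $\liminf_{t\to\infty}t^{-1}\log X_j(t)>0$, contradicting tightness of $\BX$). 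The non-persistence hypothesis, via Theorem~\ref{thm:pers}, then excludes full-support ergodic measures from $\U$ as well; together these exclusions yield $\U\subset\Conv(\M^-)$ almost surely. Finally, the non-RPS assumption combined with the antichain structure from Step~1 and the local attractivity of each $\mu^i$ from Step~2 shows that no nontrivial convex combination of $\M^-$ elements can occur in $\U$: any such combination would encode a persistent stochastic heteroclinic cycle among boundary ergodic measures, which in three dimensions is precisely the rock-paper-scissors configuration excluded by Definition~\ref{deff:rps}. Hence $\U=\{\mu^i\}$ almost surely for some $i$, proving (4). \textbf{The main obstacle} is making this last exclusion rigorous: it requires quantitative control of the random excursion times near the unstable boundary ergodic measures, together with a proof that absence of an RPS sign pattern among the external Lyapunov exponents induces a partial order on the attractors that forces almost-sure commitment to a single $\mu^i$; the techniques developed in Section~\ref{s:rock-paper-scissors} should supply the relevant toolkit.
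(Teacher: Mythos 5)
The paper's own proof of Theorem~\ref{thm:exclude} is a one-liner: check that Assumptions~\ref{a.extn4} and~\ref{a.extn3} hold under the stated hypotheses (a verification carried out case by case in Section~\ref{s:results}) and then invoke Theorem~\ref{t:ex2}, i.e., Theorem~1.3 of \cite{HN16}, as a black box. You take a genuinely different route and attempt to re-derive the content of that black box. Your Step~1 (the antichain observation on $\{I_\mu:\mu\in\M^-\}$) is a clean and correct way to obtain conclusions (1), (2), and $k\le 3$ that the paper does not spell out; and indeed under Assumption~\ref{a:external} together with Propositions~\ref{prop:1d}--\ref{prop:2d}, your $\M^-$ coincides with the paper's $\M^1$ because condition \eqref{ae3.2} of Assumption~\ref{a.extn4} is automatic once $\mu$ exists. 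Your Step~2 is a plausible sketch of the HN16-style local Lyapunov argument for $\pmu_\by(\mu^i)>0$.

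However, Step~3 contains a genuine gap, and it is not only the heteroclinic exclusion you flag at the end. The pivotal claim --- that ``a Birkhoff-type averaging argument \dots rules out any ergodic measure $\nu$ with $\lambda_j(\nu)>0$ for $j\notin I_\nu$ from appearing in the ergodic decomposition of any $\pi\in\U$ (otherwise $\liminf t^{-1}\log X_j(t)>0$, contradicting tightness)'' --- is false as stated. If $\pi=\sum_\nu c_\nu\,\nu$ is a weak$^*$ limit of $\Pi_t$, the Birkhoff argument controls the signed average $\int\bigl(f_j-\tfrac12\sigma_{jj}g_j^2\bigr)\,d\pi=\sum_\nu c_\nu\lambda_j(\nu)$, not the individual $\lambda_j(\nu)$. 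A positive $\lambda_j(\nu)$ can be, and in fact often is, offset by negative contributions from other ergodic components; a positive sign on one term does not force exponential growth of $X_j$. The rock--paper--scissors extinction regime already demonstrates this: each single-species $\mu_i$ has exactly one positive external Lyapunov exponent, yet $\U\subset\Conv(\{\mu_1,\mu_2,\mu_3\})$ with probability one. Your claimed ``exclusion'' would contradict that. What the argument actually needs --- and what the paper postulates and verifies --- is the repulsion property in Assumption~\ref{a.extn3}: for every \emph{convex combination} $\nu\in\Conv(\M^2)$ one has $\max_i\lambda_i(\nu)>0$. This is a condition on the whole simplex $\Conv(\M^2)$, not on individual ergodics, and establishing it in the non-RPS, non-persistent regime is exactly the case-by-case analysis of Section~\ref{s:results} that your proposal skips. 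Without that, you cannot conclude $\U\subset\Conv(\M^-)$, and the remainder of Step~3 does not close.
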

\begin{rmk}
We can actually prove the stronger result which says that extinction is exponentially fast with rate given by the relevant external Lyapunov exponent $$\pmu_\by(\mu^\ell):=\PP_\by\left\{\U=\{\mu^\ell\}\,\text{ and }\,\lim_{t\to\infty}\dfrac{\ln X_i(t)}t=\lambda_i(\mu^\ell)<0, i\notin I_{\mu^\ell} \right\}>0, \by\gg0, \ell=1,\dots,k.$$
\end{rmk}
\begin{figure}
\includegraphics[width=0.3\textwidth]{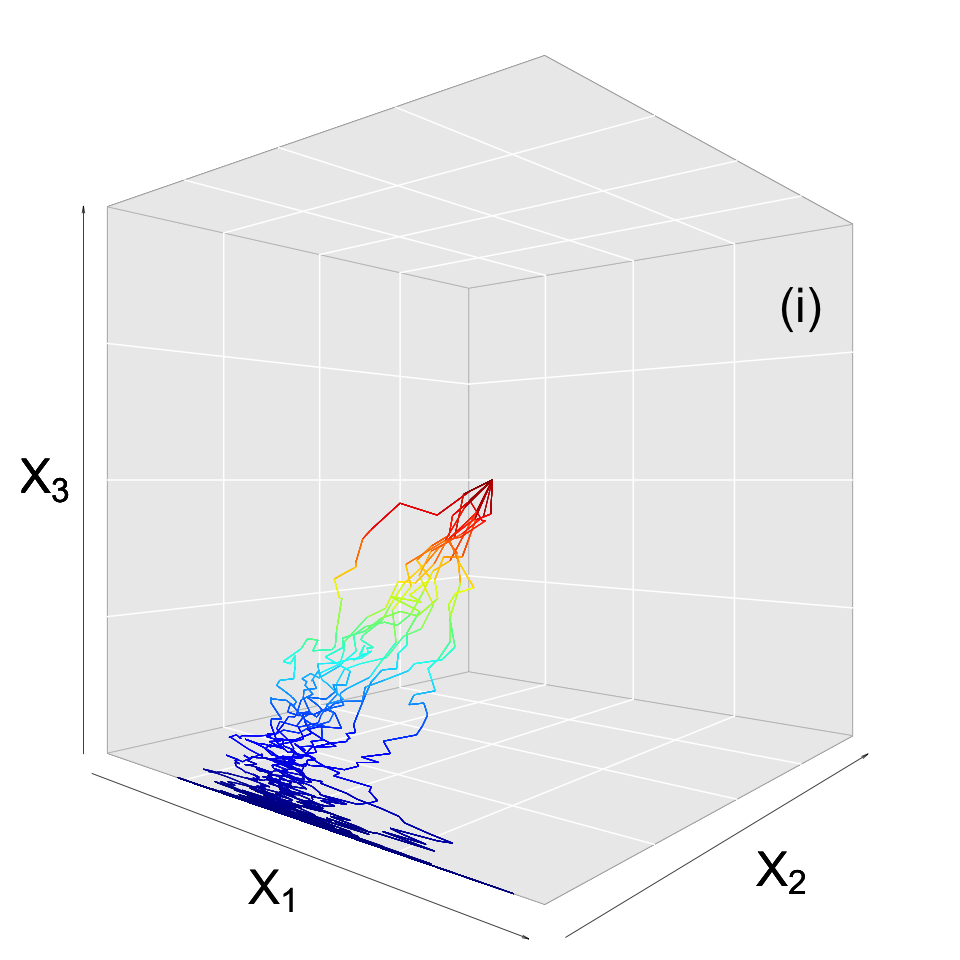}
\includegraphics[width=0.3\textwidth]{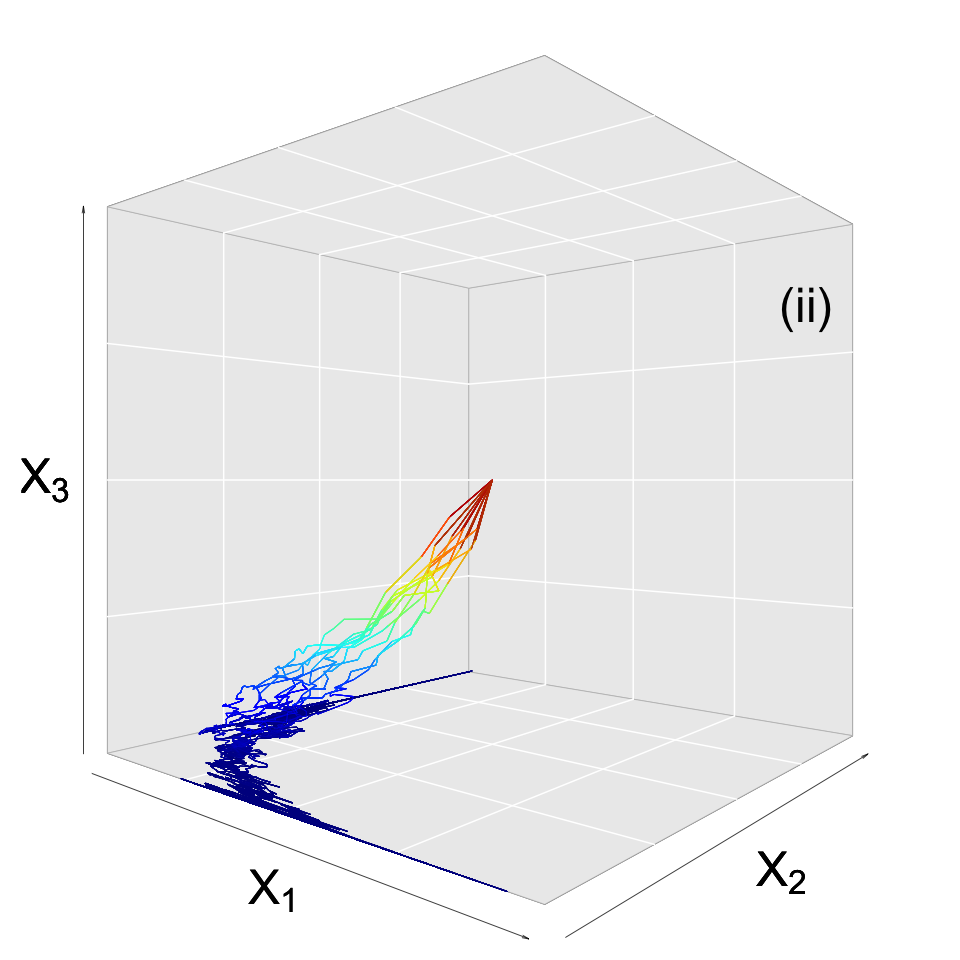}
\includegraphics[width=0.3\textwidth]{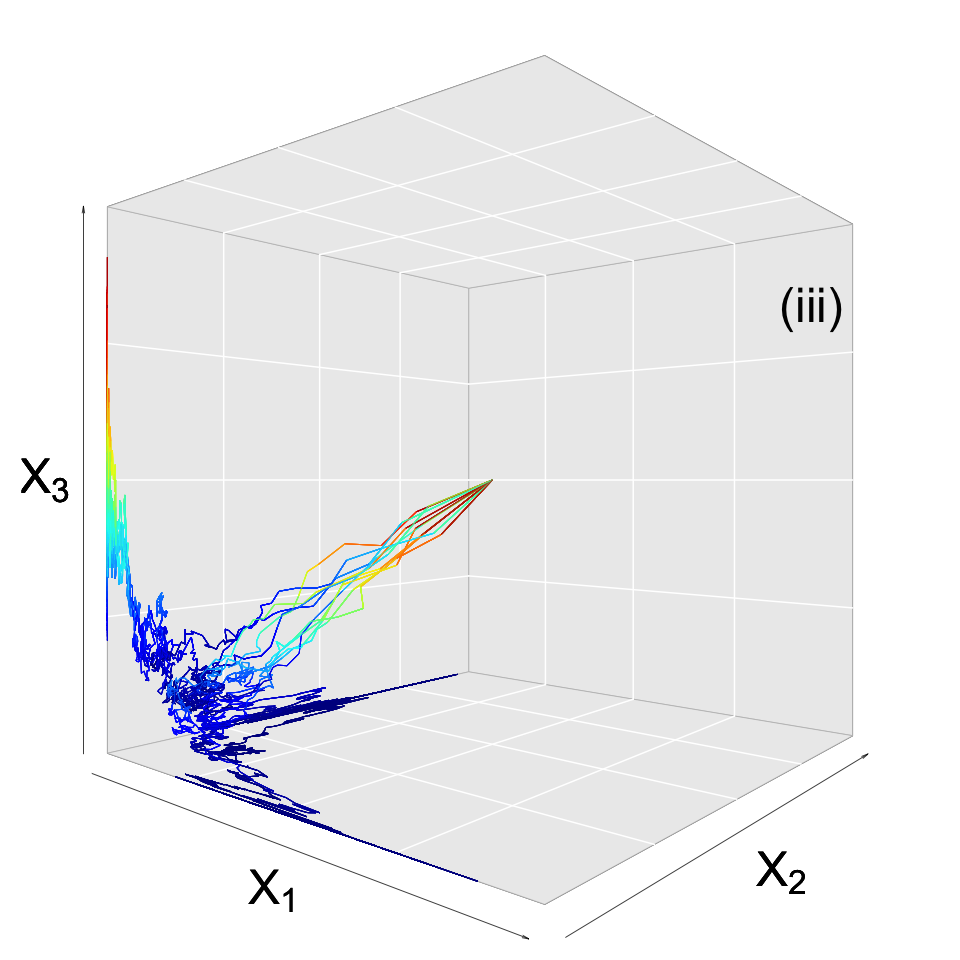}\\
\includegraphics[width=0.3\textwidth]{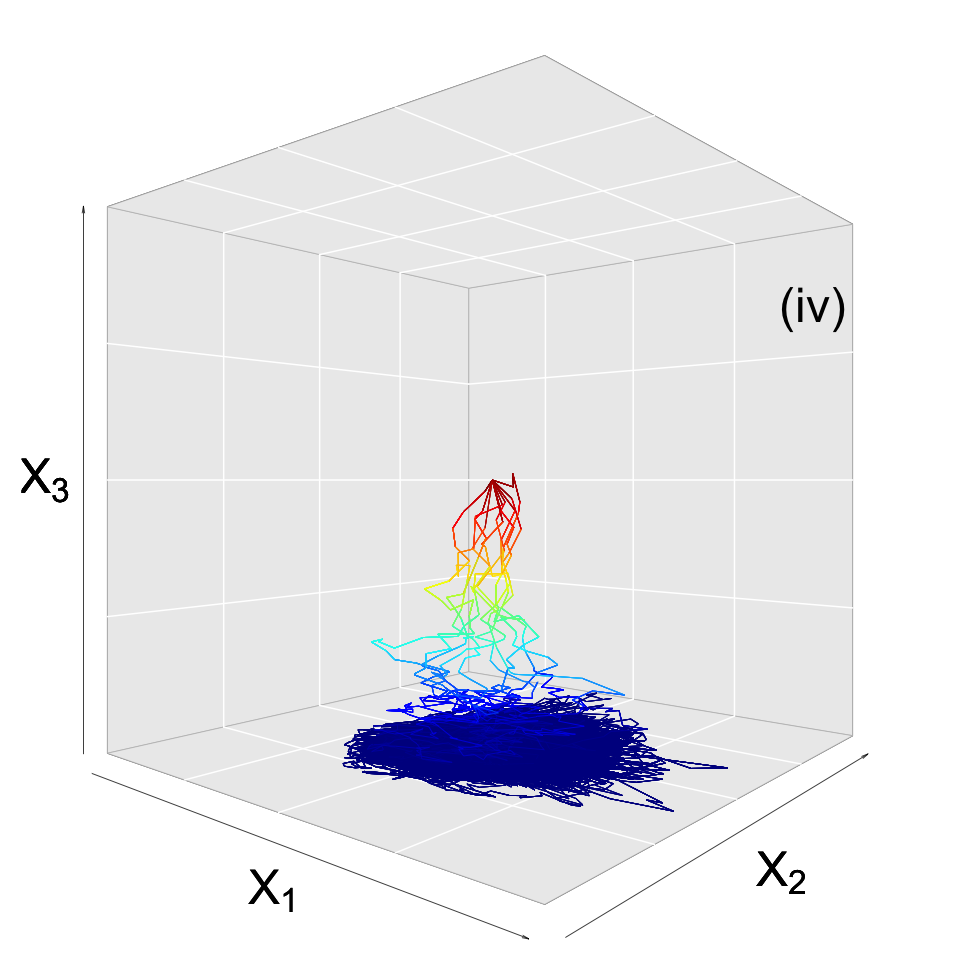}
\includegraphics[width=0.3\textwidth]{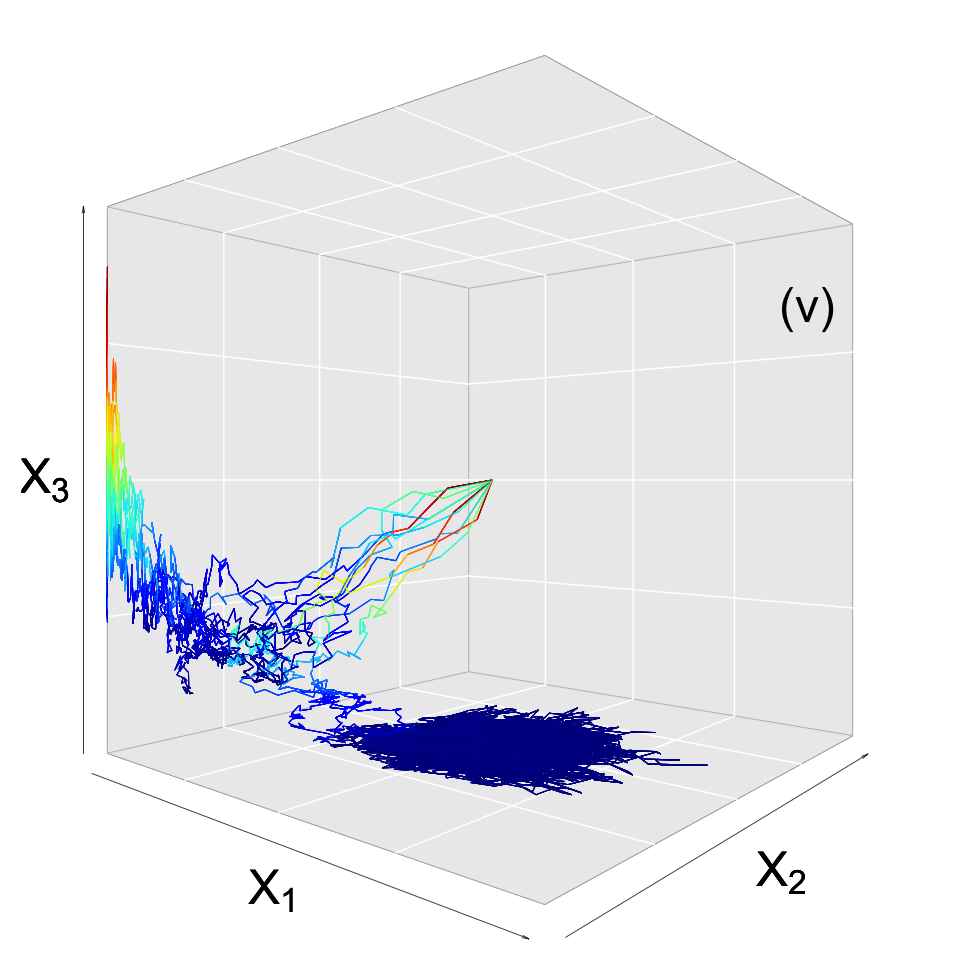}
\includegraphics[width=0.3\textwidth]{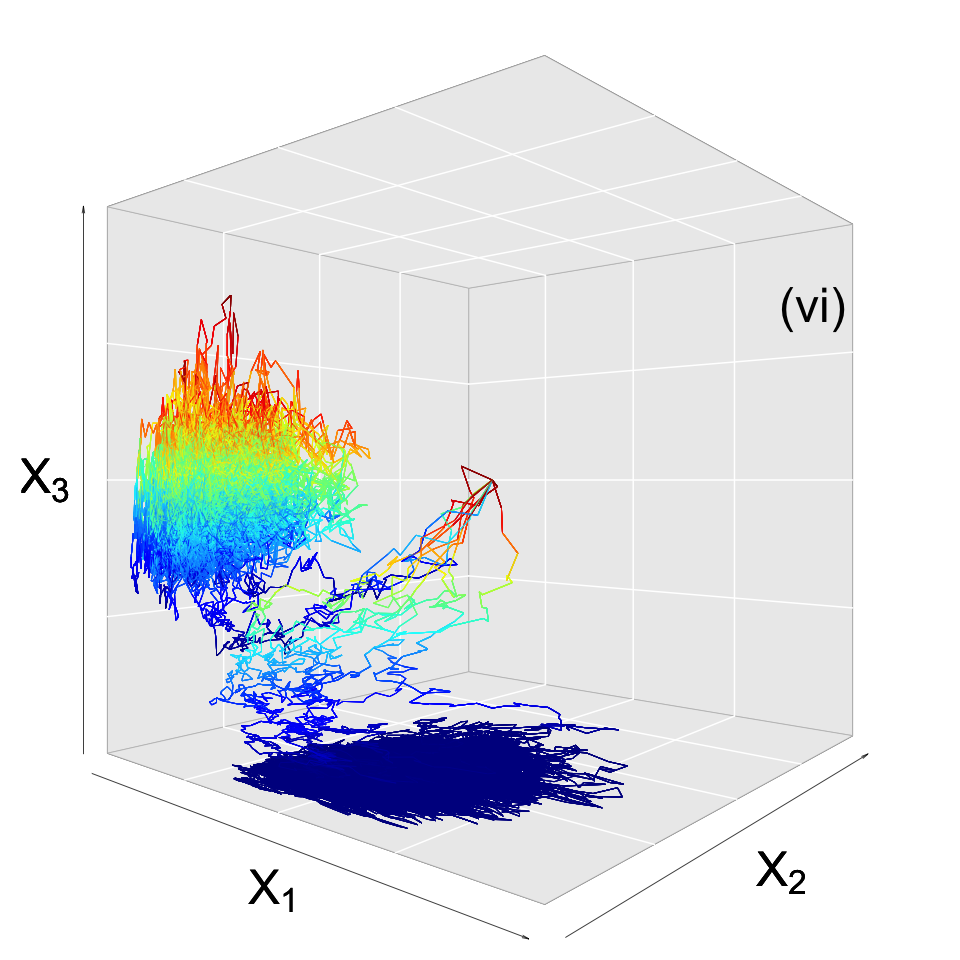}\\
\includegraphics[width=0.3\textwidth]{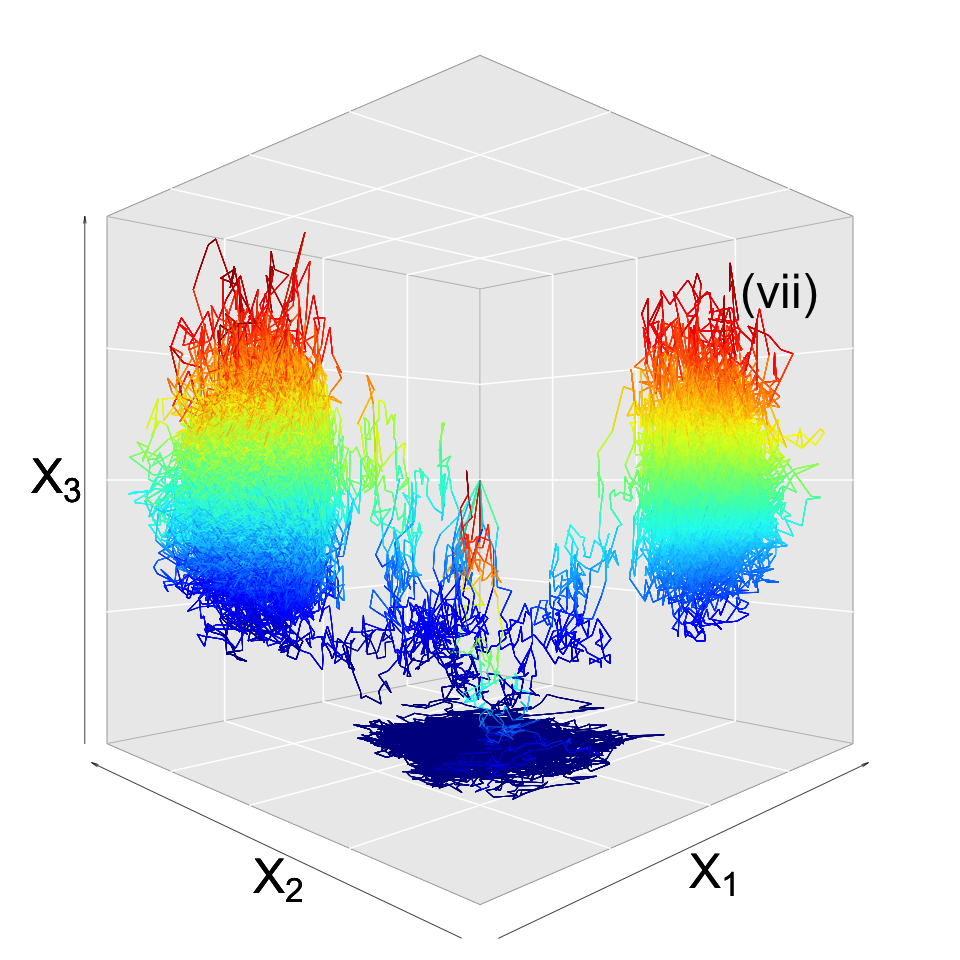}
\includegraphics[width=0.3\textwidth]{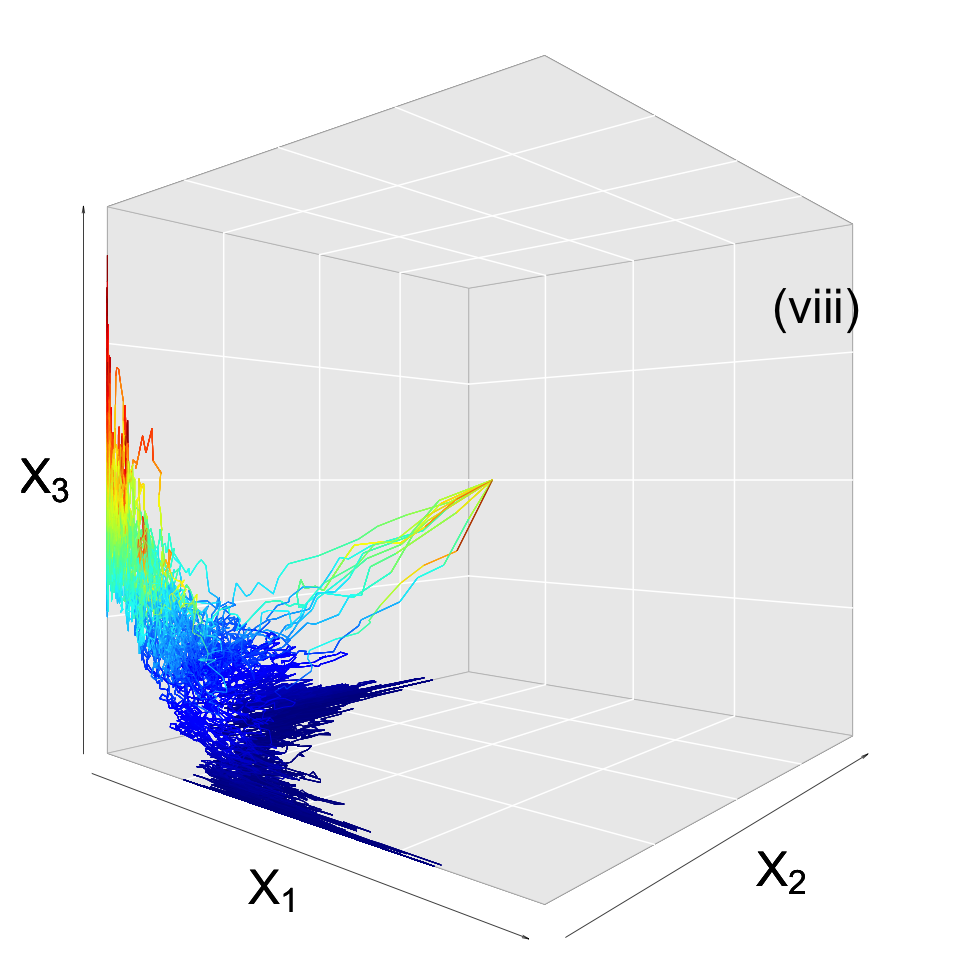}
\includegraphics[width=0.3\textwidth]{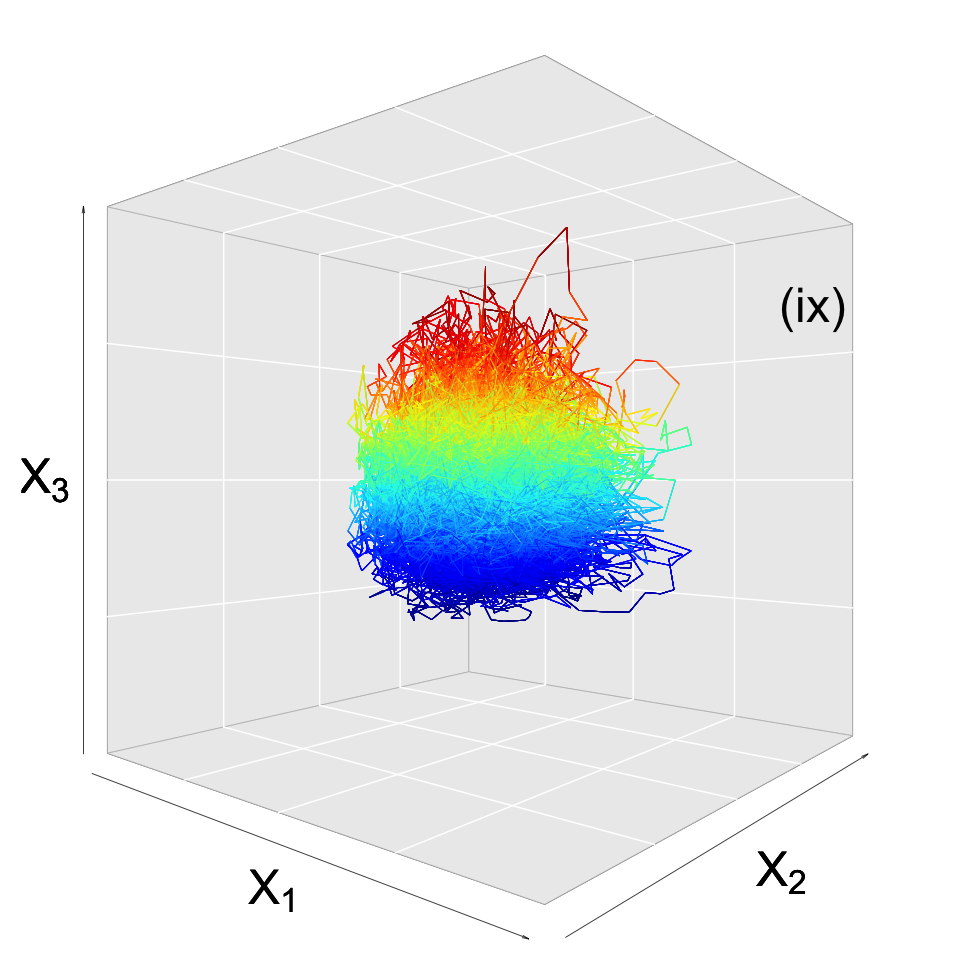}
\caption{The nine types of stochastic dynamics, up to permutation of indices, where at least one species persists. Drift functions are (i) $f(\bx)=(1-x_1,1-2x_1-x_2,1-2x_1-x_3)$, (ii) $f(\bx)=(1-x_1-2x_2,1-2x_1-x_2,1-2x_1-2x_2-x_3)$, (iii) $f(\bx)=(1-x_1-2x_2-2x_3,1-2x_1-x_2-2x_3,1-2x_1-2x_2-x_3)$, (iv) $f(\bx)=(1-x_1,1-x_2,1-x_1-x_2-x_3)$, (v) $f(\bx)=(1-x_1-2x_3,1-x_2-2x_3,1-x_1-x_2-x_3)$, (vi) $f(\bx)=(1-x_1-2x_3,-0.1+0.4x_1-0.5x_2+0.4x_3,1-2x_1-x_2)$, (vii) $f(\bx)=(1-x_1-4x_2x_3,1-x_2-4x_1x_3,1-x_3-x_1x_2)$, (viii) $f(\bx)=c(1-x_1-2x_2-0.8x_3,1-0.8x_1-x_2-2x_3,1-2x_1-0.8x_2-x_3)$, (ix)$f(\bx)=c(1-x_1,1-x_2,1-x_3)$. The diffusion term for species $i=1,2,3$ is $0.25X_idB_i(t)$ where $B_1(t),B_2(t),B_3(t)$ are independent, standard Brownian motions.}\label{fig:one}
\end{figure}

Up to permutations of the indices, these theorems characterize the asymptotic behavior of $\BX$ for $\BX(0)\gg 0$ into $10$ types. One type corresponds to all species going extinct, the other $9$ types where at least one species persists are shown in Figure~\ref{fig:one}. As shown in the proofs of the Theorems, all $10$ types of dynamics are characterized by the external Lyapunov exponents. For example, the case of $\mu^1,\mu^2,\mu^3$ with $I_{\mu^i}=\{i\}$ for Theorem~\ref{thm:exclude} occurs if and only if $\lambda_i(\bdelta^*)>0$ for all $i$, and $\max_{j\neq i}\lambda_j(\mu^i)<0$ for all $i$. Alternatively, the case of $\mu^1,\mu^2$ with $I_{\mu^1}=\{1,2\}$ and $I_{\mu^2}=\{3\}$ for Theorem~\ref{thm:exclude} occurs if and only if $\BX$ restricted to the first two species satisfies the strongly persistent condition (see, Remark~\ref{rmk:2d}), $\lambda_3(\mu^1)<0$, $\lambda_3(\bdelta^*)>0$, and $\max_{i=1,2}\lambda_i(\mu^2)<0$.

Finally, we show that Assumption~\ref{a:external} (i.e. all external Lyapunov exponents are non-zero) holds generically. In order to measure how far apart processes are from each other we need to define a topology on the stochastic differential equations~\ref{e:system}. To this end, we make the following definition.
\begin{deff} A process $\tilde \BX$ satisfying
\begin{equation}\label{e:system_2}
d\tilde X_i(t)=\tilde X_i(t) \tilde f_i(\tilde \BX(t))dt+\tilde X_i(t)\tilde g_i(\tilde\BX(t))dE_i(t), ~i=1,\dots,n
\end{equation}
and Assumptions \eqref{a.nonde} and \eqref{a.extn2} is a \emph{$\delta$-perturbation of \eqref{e:system}} for some $\delta>0$ if
\begin{equation}\label{robust1}
\sup_{\bx\in \R_+^n}\| \tilde f_i(\bx)- f_i(\bx)
\| + \sup_{\bx\in \R_+^n} \|\tilde g_i(\bx)- g_i(\bx) \| \leq \delta.
\end{equation}
\end{deff}

\begin{thm}\label{thm:generic}
Suppose \eqref{e:system} satisfies Assumptions \ref{a.nonde} and \ref{a.extn2}. For any $\delta>0$, there exist functions $\tilde f, \tilde g=g$ defining a process $\tilde \BX(t)$ by \eqref{e:system_2} such that
\begin{enumerate}
  \item $\tilde \BX(t)$ is a $\delta$-perturbation of $\BX(t)$,
  \item For every ergodic measure of $\tilde \BX(t)$ the external Lyapunov exponents are non-zero.
\end{enumerate}
\end{thm}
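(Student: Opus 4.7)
The plan is to perturb by additive constants only: set $\tilde g_i=g_i$ and $\tilde f_i(\bx)=f_i(\bx)+\eps_i$ for small constants $\eps_1,\dots,\eps_n$, and then show that for Lebesgue-almost every $\boldsymbol{\eps}=(\eps_1,\dots,\eps_n)$ in a small cube around the origin, the perturbed process $\tilde \BX$ has all external Lyapunov exponents non-zero. First I would verify that for $\max_i|\eps_i|$ sufficiently small, Assumption~\ref{a.nonde} still holds for $(\tilde f_i,\tilde g_i)$: substituting $\tilde f_i=f_i+\eps_i$ in~\eqref{a.tight} changes the bracketed expression by at most $(1+n\gamma_b)\max_i|\eps_i|$, which can be absorbed into the original strict inequality. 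Assumption~\ref{a.extn2} is unchanged since $\tilde g=g$. Thus on a neighborhood $(-\delta_0,\delta_0)^n$ of the origin, the whole theory of Section~\ref{s:models} applies to $\tilde\BX$, and the set $\widetilde{\M}$ of ergodic measures for $\tilde\BX$ is finite.

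The key structural observation is face invariance. For $I\subseteq\{1,\dots,n\}$, let $\widetilde{\M}_I(\boldsymbol{\eps})$ denote the set of ergodic measures of $\tilde\BX$ with species support $I$. Any such measure is supported on $\R_+^{I,\circ}$, and the dynamics on this face only involve the coefficients $\tilde f_j,\tilde g_j$ with $j\in I$. Hence $\widetilde{\M}_I(\boldsymbol{\eps})$ depends only on the subvector $(\eps_j)_{j\in I}$. Moreover, for any $\mu\in\widetilde{\M}_I(\boldsymbol{\eps})$ and any $i\notin I$,
\[
\tilde\lambda_i(\mu)=\int\left(f_i(\bx)-\frac{\sigma_{ii}g_i^2(\bx)}{2}\right)\mu(d\bx)+\eps_i,
\]
where the integral depends on $(\eps_j)_{j\in I}$ through $\mu$, but is manifestly independent of $\eps_i$ since $i\notin I$.

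The rest is a Fubini argument. For each $I\subsetneq\{1,\dots,n\}$ and each $i\notin I$, let
\[
B_{I,i}=\bigl\{\boldsymbol{\eps}\in(-\delta_0,\delta_0)^n : \tilde\lambda_i(\mu)=0 \text{ for some }\mu\in\widetilde{\M}_I(\boldsymbol{\eps})\bigr\}.
\]
For every fixed $(\eps_j)_{j\neq i}$, the $\eps_i$-slice of $B_{I,i}$ consists of the finitely many values $-\int\!(f_i-\sigma_{ii}g_i^2/2)\,d\mu$ as $\mu$ ranges over the finite set $\widetilde{\M}_I((\eps_j)_{j\in I})$. So $B_{I,i}$ has $n$-dimensional Lebesgue measure zero, and the bad set $B=\bigcup_{I,i}B_{I,i}$---being a finite union---also has measure zero. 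Any $\boldsymbol{\eps}\in(-\min(\delta,\delta_0),\min(\delta,\delta_0))^n\setminus B$ then yields $\tilde f_i=f_i+\eps_i$, $\tilde g_i=g_i$ satisfying both conclusions of the theorem.

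The main obstacle to turn into a clean proof is the control of $\widetilde{\M}_I(\boldsymbol{\eps})$ uniformly as $\boldsymbol{\eps}$ varies: the slicing step requires that $\widetilde{\M}_I$ be finite for \emph{every} parameter in $(-\delta_0,\delta_0)^n$, not only the unperturbed one. This in turn reduces to verifying that Assumption~\ref{a.nonde}(3) survives uniformly in $\boldsymbol{\eps}$ and that $\tilde\BX$ remains non-degenerate on each face $\R_+^{I,\circ}$---both are routine asymptotic estimates, but they must be carried out carefully before invoking the Fubini step.
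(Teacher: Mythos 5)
Your approach is genuinely different from the paper's. You perturb by additive constants $\tilde f_i=f_i+\eps_i$ and invoke a Fubini argument to conclude that almost every $\boldsymbol{\eps}$ works; the paper instead builds a specific perturbation inductively in $k=0,1,\dots,n-1$ using bump functions compactly supported near a point of each $(k+1)$-dimensional face and away from the $\le k$-dimensional faces. Both arguments hinge on the same structural observation that you correctly identify: the dynamics on $\R_+^{I,\circ}$, and hence the ergodic measure there, depend only on $\tilde f_j$ with $j\in I$, while for $i\notin I$ the external Lyapunov exponent $\tilde\lambda_i(\mu)$ is affine in $\eps_i$ with slope one. Your conclusion is even stronger than what the theorem asserts---Lebesgue-almost every constant shift works, not merely that some $\delta$-perturbation exists.

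There is, however, a gap in the Fubini step that you do not flag. To conclude from the finiteness of each $\eps_i$-slice that $B_{I,i}$ has measure zero, you need $B_{I,i}$ to be Lebesgue measurable; a set all of whose slices in a single direction are finite need not be measurable (Sierpi\'nski's construction under CH gives a counterexample). Establishing measurability requires showing that the assignment $(\eps_j)_{j\in I}\mapsto\mu_I(\boldsymbol{\eps})$---where $\mu_I(\boldsymbol{\eps})$ is the unique (by the paper's remark on ergodic supports) ergodic measure on $\R_+^{I,\circ}$ when it exists---is Borel in the weak$^*$ topology and that its domain of definition is a Borel set. This should follow from continuity of the Feller semigroup in $\boldsymbol{\eps}$, but it is a real piece of work, not one of the ``routine asymptotic estimates'' you defer to. The paper's bump construction avoids the issue entirely: each bump is supported away from the lower-dimensional faces, so the step-$(k+1)$ perturbation leaves all ergodic measures with $|I_\mu|\le k+1$ literally unchanged, and the newly shifted external exponent is computed exactly as $-\tfrac{\delta}{2}\int\phi^\mu\,d\mu<0$ with no genericity argument at all. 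Your other concern---that Assumptions~\ref{a.nonde}(3) and~\ref{a.extn2} survive uniformly for small $\boldsymbol{\eps}$---is correct and is handled by the estimate you sketch; non-degeneracy and local Lipschitzness are trivially preserved since $\tilde g=g$ and constants are smooth.
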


We note that the set of ergodic measures of the perturbed process $\tilde \BX(t)$ in Theorem~\ref{thm:generic} need not equal the ergodic measures of the unperturbed process $\BX(t)$.

\section{Proofs of Theorem \ref{thm:generic} and Propositions \ref{p:tight}, \ref{prop:LV}, \ref{p:rps}, \ref{prop:switch:ode}, \ref{prop:switch:sde}}\label{s:app1}
\begin{proof}[Proof of Theorem \ref{thm:generic}]
Let $\delta>0$ be given. To achieve the desired perturbation, we create a sequence of perturbations $\tilde f^0,\dots, \tilde f^{n-1}$ such that for all $0\le k \le n-1$, (i) $\tilde f^k, \tilde g=g$ is a $\delta$-perturbation  of $f,g$, (ii) for every ergodic invariant probability measure with $|I_\mu|\le k$, $\lambda_i(\mu)\neq 0$ for all $i\notin I_\mu$, and (iii) for $k\ge 1$, $\tilde f^k(\bx)=\tilde f^{k-1}(\bx)$ for all $\bx$ with $x_k=x_{k+1}=\dots=x_n=0$. Note that condition (iii) ensures that the processes associated with the $(f^{k-1},g)$ and $(f^k,g)$ perturbations have the same set of ergodic probability measures supported by the set $\{\bx:x_k=x_{k+1}=\dots=x_n=0\}$. Outside of this set, the ergodic probability measures of these two processes may not be the same. We prove the existence of this sequence inductively.

For $k=0$, the only ergodic invariant probability measure $\mu$ with $|I_\mu|=0$ is $\bdelta^*$. For the species $i$ such that $\lambda_i(\bdelta^*)\neq 0$, define $\tilde f_i^0=f_i$. For any species $i$ for which $\lambda_i(\bdelta^*)=0$, define\[
\tilde f_i^0(\bx)=f_i(\bx)-\frac{\delta}{2} \phi^0_i(\bx)
\]
where $\phi^0_i$ is a smooth, non-negative function that is $1$ at the origin, $0$ outside a small neighborhood of the origin, and $\|\phi^0_i\|_\infty =1$. After the perturbation
\[
\lambda_i(\bdelta^*)=-\int \frac{\delta}{2} \phi^0_i(x) \bdelta^*(d\bx) =-\frac{\delta}{2} <0,
\]
$\tilde f^0$ satisfies (i)--(iii).

Now assume there exist $\tilde f^0,\dots, \tilde f^k$ that satisfy (i)--(iii) and $k\le n-2$. We will construct $\tilde f^{k+1}$ that satisfies (i)--(iii). By Assumption~\ref{a.nonde}, for each $I\subset\{1,\dots,n\}$ there exists at most one ergodic invariant probability measure $\mu$ such that $I_\mu=I$. Let $J\subset \{1,\dots,n\}$ be the collection of $i$s such that $\lambda_i(\mu)\neq 0$ for any ergodic invariant probability measure $\mu$ with $|I_\mu|=k+1$ and $i\notin I_\mu$. For $i\in J$, define $\tilde f^{k+1}_i=\tilde f^k_i.$ For $i\notin J$, let $\mathcal{M}_i$ be the (finite) set of ergodic invariant probability measures $\mu$ such that $i\notin I_\mu, |I_\mu|=k+1$, and $\lambda_i(\mu)=0$. Let $e_1,\dots,e_n$ be the canonical basis vectors and set $\mathcal{M}_i:=\{\mu_i^1,\dots,\mu_i^\ell\}$ to be an order of $\mathcal{M}_i$. Do the following procedure in order from $\mu_i^1$ up to $\mu_i^\ell$. For $\mu_i^j\in \mathcal{M}_i$, let $\phi^{\mu_i^j}(\bx)$ be a smooth function taking values in $[0,1]$ such that $\phi^{\mu_i^j}(\sum_{i\in I_\mu}e_i)=1$, and the support of $\phi^{\mu_i^j}$ doesn't intersect any of the $\le k$ dimensional faces of $\partial \R^n_+$ nor the support of any of the previously defined $\phi$ functions. Define $\tilde f^{k+1}(\bx)=\tilde f^k(\bx)-\frac{\delta}{2}\sum_{i=1}^\ell \phi^{\mu_i^j}(\bx).$ Then $\lambda_i(\mu)=-\frac{\delta}{2}\int \phi_i^\mu(\bx)\mu(\bx)<0$ for all $\mu \in \mathcal{M}_i$. Note that, since the $\phi$'s have compact support, the perturbations of the drift terms will not violate Assumptions \ref{a.nonde} or \ref{a.extn2}. By construction,  $\tilde f^{k+1}$ satisfies (i)--(iii).

Let $\tilde \BX(t)$ be the solution of
\begin{equation*}
d\tilde X_i(t)=\tilde X_i(t) \tilde f^{n-1}(\tilde \BX(t))dt+\tilde X_i(t) g_i(\tilde\BX(t))dE_i(t), ~i=1,\dots,n.
\end{equation*}
Then $\tilde \BX(t)$ is a $\delta$-perturbation of $\BX(t)$ that has no zero external Lyapunov exponents.
\end{proof}

\begin{proof}[Proof of Proposition \ref{p:tight}]
If the system is competitive, so that $a_{ij}\leq 0$ for all $i,j=1,2,3$ then Example 1.1 from \cite{HN16} proves that such a triplet $(c_1,c_2,c_3)\in\R_+^{3,\circ}$ exists.

Suppose that $a_{12}<0, a_{13}\leq 0$ and $a_{23}\leq0$. In particular, this treats, after possibly reordering the indices, all the combinations of predator-prey and competitive interactions.
Let $c_1=M\dfrac{|a_{21}|+1}{|a_{12}|}, c_2=M>0, c_3=1$ and note that
$$
\begin{aligned}
\sum c_ix_if_i(x)\leq&
\left(M\dfrac{|a_{21}|+1}{|a_{12}|}m_1x_1+Mm_2x_2+m_3x_3\right)\\
&+M\dfrac{|a_{21}|+1}{|a_{12}|}(a_{11}x_1^2-|a_{12}|x_1x_2-|a_{13}|x_1x_3)\\
&+M(a_{22}x_2^2+a_{21}x_1x_2-|a_{23}|x_2x_3)+(a_{33}x_3+a_{32}x_1x_3+a_{32}x_2x_3)\\
\leq& \left(M\dfrac{|a_{21}|+1}{|a_{12}|}m_1x_1+Mm_2x_2+m_3x_3\right)\\
&+M\dfrac{|a_{21}|+1}{|a_{12}|}a_{11}x_1^2+Ma_{22}x_2^2
+a_{33}x_3^2+a_{32}x_1x_3+a_{32}x_2x_3\\
\leq& \left(M\dfrac{|a_{21}|+1}{|a_{12}|}m_1x_1+Mm_2x_2+m_3x_3\right) +\dfrac{a_{33}}2(x_1^2+x_2^2+x_3^2)\, \text{ (for sufficiently large $M$)}\\
\leq& K_M +\dfrac{a_{33}}2(x_1^2+x_2^2+x_3^2)\, \text{ (for sufficiently large $K_M$)}.\\
\end{aligned}
$$
Note that here we use the fact that $a_{ii}<0, i=1,2,3$.
As a result
$$
\dfrac{\sum c_ix_if_i(x)}{1+\bc^\top\bx}\leq \gamma_1 -\gamma_2(x_1+x_2+x_3)
$$
for some constants $\gamma_1,\gamma_2>0$.
Since $g_i(\bx)=1$, it is easy to see that
\eqref{a.tight} holds.
\end{proof}

\begin{proof}[Proof of Proposition \ref{prop:LV}] By Proposition \ref{p:zero} we have that for all $i\in I_\mu$
\[
0=\lambda_i(\mu)=\int_{\R^3_+}\left(m_i+\sum_{j=1}^3 a_{ij} x_j-\dfrac{\sigma_{ii}}2\right)\mu_i(d\bx) = m_i+
\sum_j a_{ij} \int_{\R^3_+} x_j\mu(d\bx)-\dfrac{\sigma_{ii}}2.
\]
By assumption, there exists a unique solution $\bar \bx$ to \eqref{e:sys}. Hence, $\bar x_i = \int_{\R^3_+} x_i \mu(d\bx)$ for all $i$ and the claimed expression for $\lambda_i(\mu)$ follows.
\end{proof}

\begin{proof}[Proof of Proposition \ref{p:rps}]
If $2>\alpha+\beta$, we have
\begin{equation}\label{e:SDE22}
\begin{split}
\lambda_2(\mu_1)\lambda_3(\mu_2)\lambda_1(\mu_3)&=\left(\frac{\sigma^3}{2 a_{11}a_{22}a_{33}}\right)(a_{11}-a_{21} )(  a_{22}-a_{32})(a_{33}-a_{13}) \\
&= \left(\frac{\sigma^3}{2 a_{11}a_{22}a_{33}}\right) (1-\beta)^3\\
&>\left(\frac{\sigma^3}{2 a_{11}a_{22}a_{33}}\right) (\alpha-1)^3\\
&= \left(\frac{\sigma^3}{2 a_{11}a_{22}a_{33}}\right) | (a_{22}-a_{12})( a_{33}-a_{23})( a_{11}-a_{31})| \\
&= |\lambda_3(\mu_1)\lambda_1(\mu_2)\lambda_2(\mu_3)|
\end{split}
\end{equation}
and by Theorem \ref{thm:rps} there is persistence. If $2<\alpha+\beta$ then from Theorem \ref{thm:rps} we have that with probability one $$\BX(t)\to \partial \R_+^3$$
as $t\to\infty$ and there is extinction.
\end{proof}

\begin{proof}[Proof of Proposition \ref{prop:switch:ode}]
The proof follows  from the proofs of Case C in Theorems 3.1 and 3.2 of \citet{hutson1984} (see also \citet{hutson_law1985}). The only difference between \eqref{e:switch:ode} and the models considered by \citet{hutson1984} is that our model includes the self-limitation term $-cX_3$ in the predator equation. The proofs of \citet{hutson1984} imply that permanence occurs if all the equilibria on the boundary $\partial \R^3_+$ have at least one positive external Lyapunov exponent with respect to the Dirac measure at the equilibrium. Alternatively, if all the external Lyapunov exponents are negative at one of the boundary equilibria, say $\bx^*$, then  there are positive initial conditions $\BX(0)\gg0$ such that $\lim_{t\to\infty} \BX(t)=\bx^*$ i.e. the system is impermanent.

The external Lyapunov exponents of the prey species at the origin are given by $\lambda_1(\bdelta^*)=\lambda_2(\bdelta^*)=r>0$. The only additional equilibria on the axes are given by $(r,0,0)$ and $(0,r,0)$ at which the predator's per-capita growth rate (the external Lyapunov exponent) equals $\lambda_3=r-d$ and the missing prey's per-capita growth rate equals $r(1-\beta)<0$. Hence, there is a positive external Lyapunov exponent at these equilibria if and only $r>d$ . The only other equilibrium in the $X_1$--$X_2$ plane is the unstable equilibrium $(r/(1+\beta),r/(1+\beta),0)$. At this equilibrium, the external Lyapunov exponent of the predator equals $r/(1+\beta)-d$ which is positive if and only if $r>(1+\beta)d$. When $r>d$, there are the equilibria $((rc+d)/(1+c),0,(r-d)/(1+c))$ and $(0,(rc+d)/(1+c),(r-d)/(1+c))$ in the $X_1$--$X_3$ and $X_2$--$X_3$ planes, respectively. The external Lyapunov exponent at these equilibria equal $r-\beta (rc+d)/(1+c)$. Hence, permanence occurs if and only if $r-\beta(rc+d)/(1+c)>0$ and $r/(1+\beta)>d$ which are equivalent to the stated conditions for permanence.
\end{proof}

\begin{proof}[Proof of Proposition \ref{prop:switch:sde}]
We begin by noting that while the functions $f_i(\bx)$ in \eqref{e:switch:sde} are not locally Lipschitz when $x_1=x_2=0$, the full drift functions $g_i(\bx)=x_if_i(\bx)$ can be uniquely extended to be locally Lipschitz functions at $x_1=x_2=0$ by defining $g_1(\bx)=g_2(\bx)=0$ and $g_3(\bx)=-d-cx_3.$ Hence, there is existence and uniqueness of strong solutions. Moreover, Theorem~\ref{thm:pers} still holds by making the change of coordinates $S=x_1+x_2$ and $y=x_1/S$ which by It\^{o}'s lemma yields
\[
\begin{aligned}
dS(t)=& S(t)\left(r-(1+\beta)S(t)-(y(t)^2+(1-y(t))^2)X_3(t)\right)dt+\varepsilon S(t)\left(y(t)dB_1(t)+(1-y(t))dB_2(t) \right)\\
dy(t)=& y(t)(1-y(t))(1-2y(t))\left(S(t)(1-\beta)+X_3(t)+\varepsilon^2 \right)dt +y(t)(1-y(t))\varepsilon(dB_1(t)-dB_2(t))\\
dX_3(t)=& X_3(t)\left(S(t)(y(t)^2+(1-y(t))^2-d-cX_3(t)  \right)dt+\varepsilon X_3(t)dB_3(t),
\end{aligned}
\]
and applying the arguments in Section~\ref{s:predprey} to this system whose state space is $[0,\infty)\times [0,1] \times [0,\infty)$ and where extinction of one or more species corresponds to $y(1-y)Sx_3=0$.

As Theorem~\ref{thm:pers} applies, we will identify when every ergodic invariant probability measure on the boundary has at least one positive external Lyapunov exponent. For the Dirac measure at the origin, $\lambda_i(\bdelta^*)=r-\varepsilon^2/2$ for $i=1,2$. Assume $0<\varepsilon<\sqrt{2r}$. Proposition~\ref{prop:1d} implies that for $i\in \{1,2\}$ there is a unique ergodic measures $\mu_i$ such that $I_{\mu_i}=\{i\}$. As the Lyapunov exponent $\lambda_3(\bdelta^*)=-d-\varepsilon^2$ is negative, Proposition~\ref{prop:1d} implies there is no additional ergodic invariant probability measure on the $x_3$ axis. The unique solution $\bar x_i$ for $i=1,2$ to $0=r-\bar x_i-\varepsilon^2/2$ is $\bar x_i=r-\varepsilon^2/2$. Using Proposition~\ref{prop:LV} we therefore get $\bar x_i = \int x_i \mu_i(d\bx)$ for $i=1,2.$ The external Lyapunov exponents at $\mu_i$ are $\lambda_j(\mu_i)=r-\beta \bar x_i -\varepsilon^2/2<0, j\in\{1,2\}\setminus\{i\}$ for the other prey species and $\lambda_3(\mu_i)=\bar x_i -d-\varepsilon^2/2=r-d-\varepsilon^2$. In the $x_1$$x_2$ plane, the negative external Lyapunov exponents for $\mu_1,\mu_2$ and Proposition~\ref{prop:2d} imply that there are no ergodic invariant probability measures $\mu$ with $I_\mu=\{1,2\}.$

Assume that the external Lyapunov exponents $\lambda_3(\mu_i)=r-d-\varepsilon^2/2$ are positive. Proposition~\ref{prop:2d} implies there exists a unique ergodic invariant probability measure $\mu_{i3}$ such that $I_{\mu_{i3}}=\{i,3\}$ for $i=1,2.$ Solving the linear equations $r-\hat x_1 - \hat x_3-\varepsilon^2/2=0=\hat x_1-d-c\hat x_3 - \varepsilon^2/2$ for $\hat x_1,\hat x_3$ yields $\hat x_3= (r-d-\varepsilon^2)/(1+c)$ and $\hat x_1= (rc+d+\varepsilon^2)/(1+c) -\varepsilon^2/2$. Proposition~\ref{prop:LV} implies that
\[
\lambda_1(\mu_{13})=\lambda_2(\mu_{23})=r-\beta\left( (rc+d+\varepsilon^2)/(1+c) -\varepsilon^2/2\right).
\]
For $\varepsilon>0$ sufficiently small, $\lambda_1(\mu_{13})>0$ if $\frac{r}{\beta}(1+c(1-\beta))>d$ in which case Theorem~\ref{thm:pers} implies the system is strongly stochastically persistent.
\end{proof}

\section{Proofs of Theorems \ref{thm:pers} and \ref{thm:exclude}}\label{s:app2}

To prove Theorems \ref{thm:pers} and \ref{thm:exclude}, we make use of two key results from \cite{HN16}. The first result provides a sufficient condition for strong, stochastic persistence in terms of the external Lyapunov exponents. The second result provides a sufficient condition for $\pmu_\by(\mu)>0$ for $\by\gg 0$ and an ergodic measure $\mu\in \M$ supporting a subset of species. These results, however, do not cover two special cases. The first of these special cases corresponds to two prey-single predator systems. For this special case, the sufficient condition of \cite{HN16} for stochastic persistence does not apply. Hence, Theorem \ref{thm2.1} in Section \ref{s:predprey} provides the necessary and sufficient condition (under the assumption of non-zero external Lyapunov exponents) for stochastic persistence. The second special case corresponds to rock-paper-scissor systems as defined in Definition \ref{deff:rps}. For this special case, the condition for the boundary to be attracting doesn't follow from \cite{HN16}. Hence, Theorem \ref{thm4.1} from Section \ref{s:rock-paper-scissors} provides the necessary result for this case.

Let $\M$ be the set of ergodic invariant probability measures of $\BX$ supported on the boundary $\partial\R^3_+:=\R_+^3\setminus \R_+^{3,\circ}$. Denote by $\Conv(\M)$ the invariant probability measures supported on $\partial\R^3_+$,
i.e. the probability measures $\pi$ of the form
$\pi(\cdot)=\sum_{\nu\in\M}p^\nu\nu(\cdot)$
with $p^\nu\geq 0,\sum_{\nu\in\M}p^\nu=1$.

The following condition ensures strong stochastic persistence.
\begin{asp}\label{a.coexn}
For any $\mu\in\Conv(\M)$ one has
$$\max_i\lambda_i(\mu)>0.$$
\end{asp}
We note \citep{SBA11, HN16, BS18} that Assumption \ref{a.coexn} is equivalent to the following assumption.
\begin{asp}\label{a:hof}
There exist numbers $p_i\geq0$ such that
\[
\sum_{i}p_i\lambda_i(\mu)>0, \mu\in \M.
\]
\end{asp}
\begin{thm}\label{t:pers2}
Suppose that Assumptions \ref{a.nonde} and \ref{a.coexn} hold. Then $\BX$ is strongly stochastically persistent and converges exponentially fast to a unique invariant probability measure $\pi^*$ which is supported on $\R_+^{3,\circ}$.
\end{thm}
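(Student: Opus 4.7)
The plan is to follow the Lyapunov-function framework developed in \cite{HN16} for $n=2$, adapting it to $n=3$. The scheme combines (i) the tightness at infinity provided by Assumption \ref{a.nonde}(3) with (ii) a drift that pushes trajectories away from the boundary, supplied by the coexistence Assumption \ref{a.coexn}.

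First, I would exploit the equivalence (cited from \cite{SBA11, HN16, BS18}) between Assumptions \ref{a.coexn} and \ref{a:hof} to pick non-negative weights $p_1, p_2, p_3$ and a constant $\eta>0$ such that $\sum_i p_i \lambda_i(\mu) \geq \eta$ for every $\mu \in \M$. Define $V(\bx) := -\sum_i p_i \log x_i$ on $\R_+^{3,\circ}$. It\^o's formula gives
\[
dV(\BX(t)) = -\sum_{i=1}^3 p_i \Big( f_i(\BX(t)) - \tfrac{\sigma_{ii}}{2} g_i^2(\BX(t)) \Big) dt + dM_t,
\]
where $M_t$ is a local martingale whose quadratic variation is controllable under Assumption \ref{a.nonde}. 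Along any (random) subsequence $t_k\to\infty$ for which the occupation measures $\Pi_{t_k}$ converge weakly to some $\mu\in\Conv(\M)$, the ergodic decomposition combined with the definition of the $\lambda_i$ yields $\frac{1}{t_k}\int_0^{t_k} \sum_i p_i(f_i - \tfrac{\sigma_{ii}}{2} g_i^2)(\BX(s))\,ds \to \sum_i p_i \lambda_i(\mu) \geq \eta$. This forces $V(\BX(t_k))/t_k \to -\eta < 0$, which is incompatible with $V$ being bounded along trajectories that remain in any fixed compact subset of $\R_+^3$ (the tightness guaranteed by Assumption \ref{a.nonde}(3)). Consequently no accumulation point of $\Pi_t$ can put mass on $\partial\R_+^3$, and combining this with tightness produces an invariant probability measure $\pi^*$ supported on $\R_+^{3,\circ}$.

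For uniqueness and exponential convergence, the non-degeneracy of the diffusion (Assumption \ref{a.nonde}(1)) together with local Lipschitz regularity (Assumption \ref{a.nonde}(2)) yields strong Feller and irreducibility on $\R_+^{3,\circ}$. Combining these with the geometric drift supplied by $V$ (augmented by the Foster function from Assumption \ref{a.nonde}(3)) places the process within the Meyn--Tweedie framework on compacta of $\R_+^{3,\circ}$, giving uniqueness of $\pi^*$ and exponential convergence in total variation. The main obstacle is the boundary-excursion control in the persistence step: since $V$ is unbounded on $\partial\R_+^3$, a direct Foster--Lyapunov argument fails, and one must combine an exponential martingale estimate with a decomposition of trajectories into \emph{far from boundary} and \emph{near boundary} segments, exploiting the continuity of $\mu\mapsto \sum_i p_i \lambda_i(\mu)$ on the compact set $\Conv(\M)$ to produce a uniformly positive averaged drift in some neighborhood of $\partial\R_+^3$.
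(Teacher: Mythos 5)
The paper's own proof of Theorem~\ref{t:pers2} is a one-line citation: ``This follows by Theorem 1.1 from \cite{HN16}.'' Your proposal, by contrast, sketches the internals of that result, so the two are not directly comparable as proofs, but it is worth assessing whether your sketch is faithful to the mechanism that actually makes HN16's Theorem~1.1 work.

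Your outline captures the right high-level ingredients: the passage from Assumption~\ref{a.coexn} to Assumption~\ref{a:hof} (weights $p_i$ with $\sum_i p_i\lambda_i(\mu)\ge\eta>0$ uniformly over $\M$), an occupation-measure/contradiction step, and a Meyn--Tweedie argument via strong Feller plus irreducibility for uniqueness and exponential convergence. However, there are two places where the argument as written would not close, and where the actual HN16 proof (and the proof of Theorem~\ref{thm2.1} in Section~\ref{s:predprey} of this paper, which follows the same template) does something structurally different.

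First, the contradiction step is not tight. You argue that if $\Pi_{t_k}$ converges to $\mu\in\Conv(\M)$ then $V(\BX(t_k))/t_k\to-\eta<0$, and call this ``incompatible with $V$ being bounded along trajectories that remain in any fixed compact subset.'' But Assumption~\ref{a.nonde}(3) only gives tightness of the transition laws, not pathwise confinement to a deterministic compact set; moreover $V=-\sum p_i\log x_i$ is not bounded from above near $\partial\R_+^3$, so ``$V(\BX(t_k))/t_k\to-\eta<0$'' would actually mean $\sum_i p_i\log X_i(t_k)\to+\infty$, which you would need to rule out using the at-infinity drift --- it is not automatic. The HN16 machinery resolves this by never separating the near-boundary and near-infinity controls: it works directly with the single coercive function
\[
V_{\bp}(\bx)=\frac{1+\bc^\top\bx}{\prod_i x_i^{p_i}},
\]
which blows up both at $\partial\R_+^3$ and at infinity, raised to a small power $\theta\in(0,\delta_0/2)$ chosen after bounding the log-Laplace transform of the increment (this is exactly the role of Lemma~\ref{lm2.5} and Proposition~\ref{prop2.1} here). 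That yields a genuine Foster--Lyapunov inequality $\E_\bx V_{\bp}^\theta(\BX(T))\le\kappa V_{\bp}^\theta(\bx)+K$ with $\kappa<1$ on all of $\R_+^{3,\circ}$, which is what feeds the Meyn--Tweedie theory. Your phrase ``augmented by the Foster function'' gestures at this but leaves out the essential step of exponentiating $V$ to a small power $\theta$ to turn an averaged drift estimate into a geometric contraction; without it you get at best a law-of-large-numbers persistence statement, not exponential convergence in total variation.

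Second, even the averaged drift estimate is subtler than the sketch suggests: the occupation-measure limits may be non-ergodic elements of $\Conv(\M)$, so one must establish uniformity of $\sum_i p_i\lambda_i(\cdot)\ge\eta$ over the \emph{convex hull} (which you do note), and one must also control the integrals against $\Pi_{t_k}$ of the unbounded integrands $f_i-\tfrac{\sigma_{ii}}{2}g_i^2$ in the weak-$^*$ limit; this is precisely what Lemma~\ref{lm3.3} and its analogues in \cite{HN16} furnish, under Assumption~\ref{a.extn2}. Your sketch cites only Assumption~\ref{a.nonde}, but the cited theorem and its use elsewhere in this paper tacitly rely on the moment control coming from the tightness inequality~\eqref{a.tight} and the subpolynomial growth of $g_i^2$. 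So the architecture of your proof is the right one, but you should replace the two-function ansatz by the single function $V_\bp^\theta$, insert the log-Laplace/Taylor step (Lemma~\ref{lm2.5}, Proposition~\ref{prop2.1}), and state explicitly the tightness-of-integrands lemma that licenses the passage to the limit in the occupation-measure argument.
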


\begin{proof}
This follows by Theorem 1.1 from \cite{HN16}.
\end{proof}
\zero*
\begin{proof}
 This follows by \cite{HN16}.
\end{proof}

\begin{asp}\label{a.extn4}
There exists an ergodic measure $\mu\in\M$ such that
\begin{equation}\label{ae3.1}
\max_{i\in I_\mu^c}\lambda_i(\mu)<0.
\end{equation}
If $\R_+^\mu\ne\{\0\}$, suppose further that
for any $\nu\in\Conv(\M_\mu)$, we have
\begin{equation}\label{ae3.2}
\max_{i\in I_\mu}\lambda_i(\nu)>0
\end{equation}
where $\M_\mu:=\{\nu'\in\M:\suppo(\nu')\subset\partial\R_+^\mu\}.$
\end{asp}
We call an ergodic measure satisfying Assumption \ref{a.extn4} a \textit{transversal attractor}. This means that $\mu$ attracts all directions that are not among the directions from its support $I_\mu$. Note that by Proposition \ref{p:zero} we always have $\lambda_i(\mu)=0, i\in I_\mu$. Assumption \ref{a.extn4} says that there exists at least one transversal attractor. Define
\begin{equation}\label{e:M1}
\M^1:=\left\{\mu\in\M : \mu ~~\text{satisfies Assumption} ~~\ref{a.extn4}\right\},
\end{equation}
and
\begin{equation}\label{e:M2}
\M^2:=\M\setminus\M^1.
\end{equation}
We need an additional assumption which ensures that apart from those in $\Conv(\M^1)$,
invariant probability measures are \textit{repellers}.
\begin{asp}\label{a.extn3}
Suppose that one of the following is true
\begin{itemize}
  \item $\M^2=\emptyset$

  \item For any $\nu\in\Conv(\M^2)$, $\max_i \lambda_i(\nu)>0.$
\end{itemize}
\end{asp}

\begin{thm}\label{t:ex2}
Suppose that Assumptions \ref{a.nonde},  \ref{a.extn2}, \ref{a.extn4} and \ref{a.extn3} are satisfied and $\M^1\neq \emptyset$.
Then for any $\bx\in\R^{3,\circ}_+$
\begin{equation}\label{e0-thm4.2}
\sum_{\mu\in\M^1} \pmu_\bx(\mu)=1
\end{equation}
where
\[
\pmu_\bx(\mu)=\PP_\bx\left(\U=\{\mu\} \mbox{ and } \limsup_{t\to\infty}\frac{1}{t}\log X_i(t)=\lambda_i(\mu) \mbox{ for all }i\notin I_\mu \right).
\]

\end{thm}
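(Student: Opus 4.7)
The plan is to adapt the boundary-extinction analysis developed in \cite{HN16} for two-dimensional systems to the richer invariant structure available on $\partial\R^3_+$. The set $\M$ of boundary ergodic measures splits as $\M^1 \sqcup \M^2$: members of $\M^1$ are \emph{transverse attractors} (stable in every excluded direction, with their face dynamics avoiding lower-dimensional traps), whereas members of $\M^2$ fail the transversal-attraction condition and are effectively repelling by Assumption~\ref{a.extn3}. The target is to show that for $\bx \in \R^{3,\circ}_+$ the occupation measure $\Pi_t$ concentrates almost-surely on exactly one $\mu \in \M^1$, with $\limsup_{t\to\infty} t^{-1}\log X_i(t) = \lambda_i(\mu)$ for all $i \notin I_\mu$.

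First I would rule out $\Conv(\M^2)$ as a long-run destination via a Hofbauer-type argument. By compactness of $\Conv(\M^2)$ and continuity of $\nu \mapsto \lambda_i(\nu)$, Assumption~\ref{a.extn3} supplies a single weight vector $(p_i)$ with $p_i \geq 0$ and a constant $\eta>0$ such that $\sum_i p_i \lambda_i(\nu) \geq \eta$ uniformly on $\Conv(\M^2)$. The function $V(\bx)=\sum_i p_i \log x_i$ combined with It\^o's formula has drift bounded below by $\eta/2$ in a neighborhood of $\suppo(\Conv(\M^2))$, and a stopping-time argument identical to Section~5 of \cite{HN16} forces the trajectory to spend vanishing time in that neighborhood. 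Consequently, no weak-$*$ limit of $\Pi_t$ can charge $\Conv(\M^2)$.

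Next, for each $\mu \in \M^1$ I would construct a transverse Lyapunov function. Because $\max_{i\in I_\mu^c}\lambda_i(\mu)<0$, every excluded species has a strictly negative growth rate at $\mu$, so positive weights $q_i$ can be chosen with $\sum_{i\notin I_\mu} q_i \lambda_i(\mu) =: -\kappa<0$. The function $W_\mu(\bx) = -\sum_{i\notin I_\mu} q_i \log x_i$ then has drift bounded above by $-\kappa/2$ throughout a tubular neighborhood of $\R_+^{\mu,\circ}$, producing transverse attraction. The second clause of Assumption~\ref{a.extn4}, namely $\max_{i\in I_\mu}\lambda_i(\nu)>0$ for $\nu\in\Conv(\M_\mu)$, is exactly the analogue of Assumption~\ref{a.coexn} for the subsystem living on $\R_+^\mu$; Theorem~\ref{t:pers2} applied to that subsystem then shows the face-dynamics is strongly stochastically persistent, so the trajectory is not drawn into lower-dimensional invariant sets within $\partial\R_+^\mu$. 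Combining the two estimates yields positive-probability absorption into a neighborhood of each $\mu\in\M^1$; on that event, the ergodic theorem applied to the It\^o decomposition of $\log X_i(t)$ gives the sharp rate $\lambda_i(\mu)$ and the weak-$*$ convergence $\Pi_t \Rightarrow \mu$.

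The main obstacle is the exhaustiveness claim $\sum_{\mu\in\M^1}\pmu_\bx(\mu)=1$: almost every trajectory must eventually commit to exactly one transverse attractor. Since distinct $\mu,\mu'\in\M^1$ can share a boundary face, excursions may wander among their transverse neighborhoods through $\R^{3,\circ}_+$ or past the repellers in $\M^2$. I would close the argument with a Borel--Cantelli / geometric-trial scheme: the Stage~1 estimate forces quick passage past any neighborhood of $\Conv(\M^2)$, while a Harris-type recurrence argument on compact subsets of $\R^{3,\circ}_+$ produces a uniform lower bound on the probability of entering a prescribed neighborhood of $\M^1$ on each excursion. Finiteness of $\M^1$, which follows from non-degeneracy in Assumption~\ref{a.nonde}, then yields that one attractor is selected with probability one. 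I would emphasize that the rock--paper--scissors configurations of Definition~\ref{deff:rps} are automatically absent here: in that setting every axis measure $\mu_i$ has some positive transverse Lyapunov exponent, forcing $\M^1=\emptyset$ and contradicting the hypothesis $\M^1\neq\emptyset$; this is precisely why the heteroclinic-cycle pathology that would otherwise block the patching step does not arise.
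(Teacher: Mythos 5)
The paper's own proof of Theorem~\ref{t:ex2} is a single sentence: it cites Theorem~1.3 of \cite{HN16} and stops. There is therefore no in-paper argument to compare against; what you have produced is a reconstruction of the cited theorem's proof, and your outline does capture the right two-stage strategy from \cite{HN16} (repel away from $\Conv(\M^2)$ via a Hofbauer-type weighted Lyapunov function, then establish transversal attraction toward each $\mu\in\M^1$ together with persistence of the face subsystem, then patch). Your closing observation that $\M^1=\emptyset$ in the rock-paper-scissors configuration is correct and is precisely why that case is handled separately in Section~\ref{s:rock-paper-scissors} rather than by this theorem.

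Two cautions on the details. First, when you assert that $V(\bx)=\sum_i p_i\log x_i$ has ``drift bounded below by $\eta/2$ in a neighborhood of $\suppo(\Conv(\M^2))$'' (and similarly that $W_\mu$ has drift bounded above by $-\kappa/2$ in a tubular neighborhood), this cannot hold pointwise: the generator applied to $\log x_i$ equals $f_i(\bx)-\sigma_{ii}g_i^2(\bx)/2$, which generally changes sign on the support of $\nu$; the sign of $\sum_i p_i\lambda_i(\nu)$ is only an \emph{integral} statement. The correct mechanism, as in Lemma~\ref{lm3.1} of this paper and its analogue in \cite{HN16}, is a time-averaged drift bound obtained by a compactness/contradiction argument over finite horizons $T\in[T^*,n^*T^*]$ combined with Lemma~\ref{lm3.3}-type semicontinuity; this is then upgraded to an exponential Lyapunov estimate via a log-Laplace (Lemma~\ref{lm2.5}) and Taylor argument as in Proposition~\ref{prop2.1}. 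Your sketch elides this and replaces it with a pointwise drift inequality that is simply false in general. Second, ``Harris-type recurrence on compact subsets of $\R^{3,\circ}_+$'' is the wrong notion here: under the hypotheses the chain is \emph{transient} in the interior and has no interior invariant measure, so Harris recurrence fails. What one actually needs is a uniform lower bound on the probability of reaching a boundary neighborhood in bounded time from a compact set (via irreducibility and the support theorem), combined with the fact that any weak$^*$-limit of $\Pi_t$ is a boundary-supported invariant measure and with the Stage-1 estimate to exclude measures that charge $\Conv(\M^2)$, before the supermartingale argument yields absorption into one $\mu\in\M^1$ with total probability one. With those two corrections, your outline is faithful to the route taken in \cite{HN16}.
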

\begin{proof}
This follows from Theorem 1.3 in \cite{HN16}.
\end{proof}

\pers* 
\begin{proof}
Suppose we are in the setting from Section \ref{s:predprey}. This means that there are two prey species and one predator such that:

$$\lambda_1(\bdelta^*)>0, \lambda_2(\bdelta^*)>0, \lambda_3(\bdelta^*)<0,$$
$$\lambda_2(\mu_1)<0, \lambda_1(\mu_2)<0, \lambda_3(\mu_1)>0, \lambda_3(\mu_2)>0,$$
and
$$\lambda_2(\mu_{13})>0, \lambda_1(\mu_{23})>0.$$
In this special case the result follows from Theorem \ref{thm2.1} below.

Suppose that we are not in the setting from Section \ref{s:predprey} or in the rock-paper-scissors setting from Section \ref{s:rock-paper-scissors}. Then one can check, case by case like we do in Section \ref{s:results}, that
\[
\max_i \lambda_i(\mu)>0, \mu\in\M
\]
is equivalent to the existence of $p_i\geq 0$ such that
\[
\sum_{i}p_i\lambda_i(\mu)>0, \mu\in \M
\]
which is equivalent to Assumption \ref{a.coexn}. This allows us to use Theorem \ref{t:pers2} and finish the proof.
\end{proof}
\exclude*
\begin{proof}
This follows from Theorem \ref{t:ex2} by noting that Assumptions \ref{a.extn4} and \ref{a.extn3} hold.
\end{proof}

\subsection{Two prey and one predator}\label{s:predprey}

Throughout this subsection we make the following assumption.
\begin{asp}\label{a:2p1p}
 There are two prey species $1,2$ and one predator $3$ such that:
$$\lambda_1(\bdelta^*)>0, \lambda_2(\bdelta^*)>0, \lambda_3(\bdelta^*)<0.$$
The two prey species cannot coexist without the predator. However, each prey species can coexist with the predator:
$$\lambda_2(\mu_1)<0, \lambda_1(\mu_2)<0, \lambda_3(\mu_1)>0, \lambda_3(\mu_2)>0.$$
As a result of Proposition \ref{prop:2d} there exist unique ergodic measures $\mu_{13}$ and $\mu_{23}$ on the interiors of the positive $x_1x_3$ and $x_2x_3$ planes.
Furthermore, each prey species can invade the stationary system of the other prey species and the predator:
$$\lambda_2(\mu_{13})>0, \lambda_1(\mu_{23})>0.$$
\end{asp}

We note that in this case we cannot use Theorem \ref{t:pers2} because Assumption \ref{a.coexn} does not hold. The goal of this section is to prove persistence in this special case.

\begin{thm}\label{thm2.1}
Suppose that Assumptions \ref{a.nonde} and \ref{a:2p1p} hold.
There exist $\theta$ (see Proposition \ref{prop2.1}),  $n^*\in \N$ (see equation \eqref{e:n*}) and constants $\kappa=\kappa(\theta,T^*)\in(0,1)$ and $K= K(\theta,T^*)>0$ such that
\begin{equation}\label{e:lya}
\E_\bx V^\theta(\BX(n^*T^*))\leq \kappa V^\theta(x)+ K\,\text{ for all }\, \bx\in\R^{3,\circ}_+.
\end{equation}
As a result,
$\BX$ is strongly stochastically persistent. The convergence of the transition probability of $\BX$ in total variation to its unique probability measure $\pi^*$ on $\R^{3,\circ}_+$ is
exponentially fast. Moreover, for any initial value $\mathbf{x}\in\R^{3,\circ}_+$ and any $\pi^*$-integrable function $f$ we have
\begin{equation}\label{slln}
\PP_\bx\left\{\lim\limits_{T\to\infty}\dfrac1T\int_0^Tf\left(\BX^{}(t)\right)dt=\int_{\R_+^{3,\circ}}f(\mathbf{u})\pi^*(d\mathbf{u})\right\}=1.
\end{equation}
\end{thm}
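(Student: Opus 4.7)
The plan is to first establish the geometric drift inequality \eqref{e:lya}; once it is in hand, the remaining three conclusions — strong stochastic persistence, exponential convergence in total variation to a unique $\pi^\ast$ supported in $\R^{3,\circ}_+$, and the strong law \eqref{slln} — are routine consequences of Meyn--Tweedie theory. Assumption~\ref{a.nonde}(1) together with Lemma~\ref{lm2.0} yield the strong Feller property and irreducibility of $\BX$ on $\R^{3,\circ}_+$, so the sub-level sets of $V^\theta$ furnish petite sets for the skeleton chain $(\BX(kn^\ast T^\ast))_{k\in\N}$, and \eqref{e:lya} then gives geometric ergodicity of this skeleton; continuous-time TV convergence and the SLLN follow by interpolation.

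The bulk of the work is the proof of \eqref{e:lya}. I would use the Lyapunov function $V$ and exponent $\theta$ provided by Proposition~\ref{prop2.1}, which I expect to be a hybrid of a tightness piece of the form $(1+\bc^{\top}\bx)^{\gamma}$ (driven by Assumption~\ref{a.nonde}(3)) and a persistence piece that blows up near each face of $\partial\R^3_+$ (driven by Assumption~\ref{a:2p1p}). The reason the one-step Lyapunov argument of Theorem~\ref{t:pers2} cannot be invoked directly is that, in the two-prey/one-predator regime, Assumption~\ref{a.coexn} (equivalently Assumption~\ref{a:hof}) can fail: a convex combination of the form $\alpha_1\bdelta^\ast+\alpha_2\mu_1+\alpha_3\mu_2$ can be built with $\max_i\lambda_i(\mu)\le 0$ whenever the ratios $\lambda_j(\bdelta^\ast)/|\lambda_j(\mu_i)|$ and $|\lambda_3(\bdelta^\ast)|/\lambda_3(\mu_i)$ are sufficiently unbalanced, so no single vector of weights $p_i\ge 0$ satisfies $\sum_i p_i\lambda_i(\mu)>0$ uniformly on $\Conv(\M)$.

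To circumvent this I would iterate the one-step estimate. First, choose $T^\ast$ large enough that, starting from any $\bx$, the normalized occupation measure $\Pi_{T^\ast}$ lies in a small weak-$^\ast$ neighborhood of $\Conv(\M)$ with high probability — this uses Assumption~\ref{a.extn2} together with the tightness from Assumption~\ref{a.nonde}(3). For each extreme point $\mu\in\{\bdelta^\ast,\mu_1,\mu_2,\mu_{13},\mu_{23}\}$, Assumption~\ref{a:2p1p} designates an invading species $i(\mu)$ with $\lambda_{i(\mu)}(\mu)>0$: namely $i(\bdelta^\ast)\in\{1,2\}$, $i(\mu_j)=3$ for $j=1,2$, and $i(\mu_{j3})\in\{1,2\}\setminus\{j\}$. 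Applying It\^o's formula to $\log X_{i(\mu)}$ along a near-$\mu$ excursion of length $T^\ast$ produces an expected gain close to $\lambda_{i(\mu)}(\mu)T^\ast$, and hence a multiplicative contraction of the corresponding factor of $V^\theta$. Then choose $n^\ast$ via \eqref{e:n*} large enough that the cumulative contraction along the invading directions, summed over $n^\ast$ consecutive windows of length $T^\ast$, dominates any adverse logarithmic excursion in the remaining coordinates; this delivers $\kappa<1$ and $K<\infty$ in \eqref{e:lya}.

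The principal obstacle lies in the third step: because the invading species $i(\mu)$ depends on $\mu$, a face-by-face bound loses control when the trajectory drifts between different faces of $\partial\R^3_+$ within an $n^\ast T^\ast$ window. The remedy is to work with the realized occupation measure $\Pi_{T^\ast}$ at each step, decompose it along the extreme points of $\Conv(\M)$, and use the quantitative invasion rates of Assumption~\ref{a:2p1p} to bound the net change of $\log V$. Taking $\theta$ small (as fixed in Proposition~\ref{prop2.1}) converts these logarithmic gains into a multiplicative contraction of $V^\theta$ while absorbing the quadratic-variation error terms coming from It\^o's formula. The careful bookkeeping across the $n^\ast$ windows — showing that \emph{some} coordinate always grows strictly despite the failure of Assumption~\ref{a.coexn} on convex combinations — is where the main technical labor resides.
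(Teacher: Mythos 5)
Your second paragraph correctly identifies the obstruction: under Assumption~\ref{a:2p1p} the hypothesis of Theorem~\ref{t:pers2} (Assumption~\ref{a.coexn}, equivalently Assumption~\ref{a:hof}) can fail for convex combinations $\alpha_1\bdelta^*+\alpha_2\mu_1+\alpha_3\mu_2$, so the one-shot weighted-log Lyapunov argument does not run. That diagnosis matches the paper. The proposed remedy, however, diverges from the paper's and, as written, has a genuine gap precisely where you locate "the main technical labor."

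Your plan is to iterate window estimates, decompose the realized occupation measure $\Pi_{T^*}$ over the extreme points of $\Conv(\M)$, and for each extreme point use It\^o's formula on $\log X_{i(\mu)}$ for a designated invading species $i(\mu)$. The difficulty is that when $\Pi_{T^*}$ is close to a \emph{non-extremal} convex combination $\alpha_1\bdelta^*+\alpha_2\mu_1+\alpha_3\mu_2$ chosen so that all three averaged exponents are nonpositive — exactly the combinations you identified as breaking Assumption~\ref{a.coexn} — there is no single coordinate $i$ whose $\log X_i$ is guaranteed to gain over that window, so "applying It\^o's formula to $\log X_{i(\mu)}$ along a near-$\mu$ excursion" does not produce a contraction. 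Declaring the resolution to be "bookkeeping across the $n^*$ windows" labels the missing step rather than supplying it; the claim that "\emph{some} coordinate always grows strictly despite the failure of Assumption~\ref{a.coexn}" is precisely what is not true in that regime, so something other than a coordinate-by-coordinate decomposition is needed.

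The paper's mechanism is different and is embedded in the Lyapunov function itself, which is \emph{not} the hybrid of the form you anticipate. Proposition~\ref{prop2.1} uses
\[
V_{\bp}(\bx)=\dfrac{1+\bc^\top\bx}{(x_1+x_2)^{p_0}\prod_{i=1}^3 x_i^{p_i}},
\]
with an extra factor $(x_1+x_2)^{-p_0}$ tracking the \emph{combined prey} density. Two facts make this work. First, inequality \eqref{e2.4},
\[
\dfrac{x_1f_1+x_2f_2}{x_1+x_2}-\dfrac{\sum_{i,j\le 2}\sigma_{ij}x_ix_jg_ig_j}{2(x_1+x_2)^2}\ \ge\ \min\Bigl\{f_1-\tfrac{\sigma_{11}g_1^2}{2},\ f_2-\tfrac{\sigma_{22}g_2^2}{2}\Bigr\},
\]
shows that the per-capita drift of $x_1+x_2$ is at least the minimum of the two prey per-capita drifts, so near $\bdelta^*$ the $p_0$-term contributes a strictly positive drift $\ge p_0\min\{\lambda_1(\bdelta^*),\lambda_2(\bdelta^*)\}$ \emph{without} needing to single out which prey is growing. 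Second, by the second identity of Lemma~\ref{lm3.2}, the $p_0$-term integrates to zero against each of $\mu_1,\mu_2,\mu_{13},\mu_{23}$, so on those ergodic measures the requirement reduces to $\sum_{i=1}^3 p_i\lambda_i(\pi)>0$, which one can arrange directly from Assumption~\ref{a:2p1p} (equation \eqref{c1e1}). The definition of $\rho^*$ in \eqref{e2.3} then gives a uniform margin over all of $\Conv(\M)$, the contradiction argument in Lemma~\ref{lm3.1} transfers this to finite-time occupation measures, and the log-Laplace control of Lemma~\ref{lm2.5} converts the drift of $\log V$ into the multiplicative contraction of $V^\theta$ in Proposition~\ref{prop2.1}. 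Combining Proposition~\ref{prop2.1} with the large-$\|\bx\|$ estimate \eqref{e2.8} yields \eqref{e:lya}, after which the persistence, total-variation convergence, and SLLN conclusions follow as you describe. In short: your obstruction is correctly identified, but the key idea that resolves it — building $(x_1+x_2)^{-p_0}$ into $V$ so that the "combined prey" direction absorbs the failure of Assumption~\ref{a.coexn} at $\bdelta^*$ — is absent from your proposal, and the replacement you sketch does not get past the very case you flagged.
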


We start with a series of lemmas and propositions.

\begin{lm}\label{lm3.2}
For any invariant probability measure $\pi$ of $\BX$ one has
\begin{equation}\label{lm3.2-e1}
\int_{\R^3_+}\left(\dfrac{\sum c_ix_if_i(\bx)}{1+\bc^\top\bx}-\dfrac{1}2\dfrac{\sum \sigma_{ij}c_ic_jx_ix_jg_i(\bx)g_j(\bx)}{(1+\bc^\top\bx)^2}\right)\pi(d\bx)=0.
\end{equation}
Furthermore,
$$\int_{\R^3_+}\left(\dfrac{x_1f_1(\bx)+x_2f_2(\bx)}{x_1+x_2}-\dfrac{\sum_{i,j=1}^2\sigma_{ij}x_ix_jg_i(\bx)g_j(\bx)}{2(x_1+x_2)^2}\right)\pi(d\bx)=0, \pi\in\{\mu_1,\mu_2,\mu_{13},\mu_{23}\}.$$
\end{lm}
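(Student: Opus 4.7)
The plan is to derive both identities via the Echeverria--Weiss characterization of invariant measures: for a suitable $C^2$ function $V$, one has $\int \mathcal{L}V\,d\pi = 0$ where $\mathcal{L}$ is the generator of $\BX$. The two displayed integrands are exactly $\mathcal{L}V(\bx)$ for $V_1(\bx) = \log(1+\bc^\top\bx)$ and $V_2(\bx) = \log(x_1+x_2)$, respectively, so the task reduces to applying It\^o, localizing the resulting local martingale, and justifying the exchange of limits via the integrability bounds coming from Assumption~\ref{a.nonde}.

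\textbf{First equality.} I would compute by It\^o's formula
\[
\log(1+\bc^\top\BX(t)) - \log(1+\bc^\top\BX(0)) = \int_0^t \mathcal{L}V_1(\BX(s))\,ds + M_t,
\]
where $M_t$ is a local martingale with quadratic variation $\int_0^t \frac{\sum \sigma_{ij}c_ic_jX_iX_jg_ig_j}{(1+\bc^\top\BX)^2}ds$, and $\mathcal{L}V_1$ is exactly the integrand in the first identity. Assumption~\ref{a.nonde}(3) furnishes the key estimate: the bracket in \eqref{a.tight} being negative for large $\|\bx\|$ yields
\[
\mathcal{L}V_1(\bx) \le -\gamma_b\Bigl(1+\sum_i(|f_i(\bx)|+g_i^2(\bx))\Bigr) + C
\]
for a constant $C$, so $\mathcal{L}V_1$ is bounded above and, more importantly, $\int(|f_i|+g_i^2)d\pi < \infty$ for every invariant probability measure $\pi$ (a standard consequence). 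I would then localize with $\tau_n := \inf\{t: V_1(\BX(t))+\langle M\rangle_t > n\}$, take expectations from a $\pi$-distributed starting point, apply Fubini together with the invariance $\pi P_s = \pi$, and obtain
\[
0 = \int\E_\bx V_1(\BX(t\wedge\tau_n))\,\pi(d\bx) - \int V_1 d\pi = \int\E_\bx\!\!\int_0^{t\wedge\tau_n}\!\!\mathcal{L}V_1(\BX(s))\,ds\,\pi(d\bx).
\]
Sending $n\to\infty$ by dominated/monotone convergence (the upper bound on $\mathcal{L}V_1$ and $V_1\ge 0$ do the work), then dividing by $t$, yields $\int\mathcal{L}V_1\,d\pi = 0$.

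\textbf{Second equality.} The same scheme is applied with $V_2(\bx) = \log(x_1+x_2)$, for which a direct It\^o computation recovers the stated integrand as $\mathcal{L}V_2$. The new obstacle is that $V_2$ is unbounded \emph{below} near $\{x_1+x_2=0\}$, so naive localization fails. I would exploit that each of $\mu_1,\mu_2,\mu_{13},\mu_{23}$ has topological support contained in a face on which $x_1+x_2>0$, so $\mu(\{x_1+x_2=0\})=0$ and the process started under any of these measures stays in $\{x_1+x_2>0\}$ by Lemma~\ref{lm2.0}. Concretely I would work with the regularization $V_2^\eps(\bx) = \log(\eps + x_1+x_2)$, apply the argument above (where $\mathcal{L}V_2^\eps$ is bounded above uniformly in $\eps$ by \eqref{a.tight}), obtain $\int \mathcal{L}V_2^\eps\,d\pi = 0$, and pass $\eps\downarrow 0$. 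Integrability of the negative part of $\mathcal{L}V_2$ against these specific measures is what permits the passage: on the one-dimensional supports (e.g.\ $\mu_1$ on the $x_1$-axis) this reduces to finiteness of $\int(f_1 - \sigma_{11}g_1^2/2)\,d\mu_1$, which is already known from Proposition~\ref{p:zero}; on the two-dimensional supports of $\mu_{13},\mu_{23}$ one uses that $x_1$ (or $x_2$) is bounded below by positive moments inherited from the non-degeneracy and tightness assumptions.

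The main obstacle I expect is the uniform-in-$\eps$ control required to pass $\eps\downarrow 0$ in the second part, i.e.\ dominating the negative part of $\mathcal{L}V_2^\eps$ near the face $\{x_1+x_2=0\}$ by a $\pi$-integrable function for $\pi\in\{\mu_1,\mu_2,\mu_{13},\mu_{23}\}$. Once this integrability is in hand, both identities follow from the same invariance-plus-localization template, and they will be used downstream to translate the $\lambda_i(\mu)=0$ constraints of Proposition~\ref{p:zero} into quantitative information about the moments of the prey under the boundary invariant measures.
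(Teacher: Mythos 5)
Your plan---apply the Echeverria--Weiss identity $\int \mathcal L V\,d\pi=0$ with $V_1=\log(1+\bc^\top\bx)$ and $V_2=\log(x_1+x_2)$, localize, and justify the limits via \eqref{a.tight}---is sound, and the two integrands are indeed $\mathcal L V_1$ and $\mathcal L V_2$. But it is not the route taken in the paper. The paper delegates the first identity to \cite[Lemma 3.3]{HN16} and, for the second, explicitly invokes ``a contradiction argument similar to Lemma 3.3 and Lemma 5.1 of \cite{HN16}'': one assumes $\int\mathcal L V_2\,d\pi=\lambda\ne 0$, uses the ergodic theorem together with Lemma~\ref{lm4.9}-type martingale estimates to conclude $t^{-1}\log\bigl(X_1(t)+X_2(t)\bigr)\to\lambda$ a.s.\ under $\pi$, and then derives a contradiction with tightness (if $\lambda>0$) or with recurrence of the restricted nondegenerate diffusion (if $\lambda<0$). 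Your approach is forward and pointwise-in-time, theirs is backward and pathwise; both ultimately rest on the same integrability facts ($\int(|f_i|+g_i^2)\,d\pi<\infty$ from \eqref{a.tight}--\ref{a.extn2}).

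Two places where your write-up is looser than it needs to be. First, after sending $n\to\infty$ you assert dominated/monotone convergence ``does the work,'' but $V_1\ge 0$ plus an upper bound on $\mathcal L V_1$ gives only Fatou, not the equality $\E V_1(\BX(t\wedge\tau_n))\to \E V_1(\BX(t))$; you need a genuine uniform-integrability argument (e.g.\ a second moment bound for $V_1$, or truncating $V_1$ itself as in \cite[Lemma 3.4]{HN16}). Second, your discussion of the negative part of $\mathcal L V_2^\eps$ near $\{x_1+x_2=0\}$ is more complicated than necessary: on $\R_+^{\mu_{13}}$ (where $x_2\equiv 0$) one has, uniformly in $\eps\in(0,1)$, $|\mathcal L V_2^\eps|=\bigl|\tfrac{x_1}{\eps+x_1}f_1-\tfrac12\tfrac{x_1^2}{(\eps+x_1)^2}\sigma_{11}g_1^2\bigr|\le|f_1|+\tfrac12\sigma_{11}g_1^2$, so the dominated-convergence step requires only the $\pi$-integrability of $|f_1|+g_1^2$ already in hand; the ``positive moments of $x_1$'' you invoke are not needed, because the potential singularity is in the Lyapunov function $V_2$, not in its generator restricted to the relevant face. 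With those two points tightened, your argument is a correct, and arguably cleaner, alternative to the paper's contradiction route.
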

\begin{rmk}
Note that even though $\frac{1}{x_1+x_2}$ is undefined on the set $E_0:=\{(x_1,x_2,x_3)\in \R_+^3~|~x_1+x_2=0\}$ this does not matter since none of the measures $\{\mu_1,\mu_2,\mu_{13},\mu_{23}\}$ put any mass on the set $E_0$.
\end{rmk}
\begin{proof}
We show in \cite[Lemma 3.3]{HN16} that
\begin{equation}\label{lm3.2-e1}
\int_{\R^3_+}\left(\dfrac{\sum c_ix_if_i(\bx)}{1+\bc^\top\bx}-\dfrac{1}2\dfrac{\sum \sigma_{ij}c_ic_jx_ix_jg_i(\bx)g_j(\bx)}{(1+\bc^\top\bx)^2}\right)\pi(d\bx)=0
\end{equation}
for any invariant probability measure $\pi$.
For the second part of the  lemma one can use a contradiction argument similar to \cite[Lemma 3.3 and Lemma 5.1]{HN16}.
\end{proof}
\begin{lm}\label{l:lyapunov}
For any ergodic measure $\mu\in\M$ we have that $\lambda_i(\mu)$ is well defined and finite. Furthermore,
\[
\lambda_i(\mu)=0,~i\in I_\mu.
\]
\end{lm}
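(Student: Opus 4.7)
The plan is the standard Ito-plus-Birkhoff recipe from \cite{HN16}. I would first establish finiteness of $\lambda_i(\mu)$ via the Lyapunov bound \eqref{a.tight}, then apply Ito's formula to $\ln X_i(t)$ for $i\in I_\mu$ started from $\mu$, and finally use stationarity of $X_i(t)$ under $\PP_\mu$ to rule out either sign of $\lambda_i(\mu)$.

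For finiteness, set
\[
\Psi(\bx):=\frac{\sum_k c_k x_k f_k(\bx)}{1+\bc^\top\bx}-\frac{1}{2}\frac{\sum_{k,\ell}\sigma_{k\ell}c_k c_\ell x_k x_\ell g_k(\bx)g_\ell(\bx)}{(1+\bc^\top\bx)^2}.
\]
Lemma \ref{lm3.2} gives $\int\Psi\,d\mu=0$, while Assumption \ref{a.nonde}(3) combined with local boundedness from Assumption \ref{a.nonde}(2) supplies a constant $C<\infty$ with $\Psi(\bx)+\gamma_b\bigl(1+\sum_k(|f_k(\bx)|+g_k^2(\bx))\bigr)\le C$ uniformly on $\R^n_+$. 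Integrating against $\mu$ yields $\int\sum_k(|f_k|+g_k^2)\,d\mu<\infty$, so $\lambda_i(\mu)=\int(f_i-\sigma_{ii}g_i^2/2)\,d\mu$ is well defined and finite.

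Now fix $i\in I_\mu$ and start $\BX$ from the stationary law $\mu$. Since $\mu$ is supported in $\R_+^{I_\mu,\circ}$ and that face is invariant under the flow by Lemma \ref{lm2.0}, we have $X_i(t)>0$ for all $t\ge 0$ almost surely, so Ito's formula gives
\[
\ln X_i(t)-\ln X_i(0)=\int_0^t\!\Bigl(f_i(\BX(s))-\frac{\sigma_{ii}}{2}g_i^2(\BX(s))\Bigr)ds+M_i(t),
\]
where $M_i(t):=\int_0^t g_i(\BX(s))\,dE_i(s)$ is a continuous local martingale with $\langle M_i\rangle_t=\sigma_{ii}\int_0^t g_i^2(\BX(s))\,ds$. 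Dividing by $t$, Birkhoff's ergodic theorem (using the integrability just established and ergodicity of $\mu$) shows the drift average converges $\PP_\mu$-a.s.\ to $\lambda_i(\mu)$, while $\langle M_i\rangle_t/t\to\sigma_{ii}\int g_i^2\,d\mu<\infty$ yields $M_i(t)/t\to 0$ via the strong law for continuous local martingales. Hence $\ln X_i(t)/t\to\lambda_i(\mu)$ $\PP_\mu$-a.s.

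The main step is to conclude $\lambda_i(\mu)=0$ from stationarity. If $\lambda_i(\mu)>0$ then $X_i(t)\to\infty$ a.s., so for each fixed $N$, $\PP_\mu(X_i(t)>N)\to 1$ as $t\to\infty$; but stationarity forces $\PP_\mu(X_i(t)>N)=\mu(\{x_i>N\})$ to be independent of $t$, giving $\mu(\{x_i>N\})=1$ for every $N$, which contradicts $\mu(\{x_i>N\})\to 0$ as $N\to\infty$. Symmetrically, $\lambda_i(\mu)<0$ forces $X_i(t)\to 0$ a.s., and stationarity then gives $\mu(\{x_i<\varepsilon\})=1$ for every $\varepsilon>0$, contradicting $\mu(\R_+^{I_\mu,\circ})=1$. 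The only part requiring real care is the integrability bookkeeping in the first step---matching \eqref{a.tight} to Lemma \ref{lm3.2}---since the Birkhoff and martingale SLLN steps are then completely standard.
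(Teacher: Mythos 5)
Your reconstruction is correct and tracks the argument the paper relies on: the paper's own ``proof'' is just a citation to Lemma~5.1 of \cite{HN16}, and that lemma is established by exactly your recipe --- use \eqref{a.tight} plus the zero-average identity of Lemma~\ref{lm3.2} to get $\mu$-integrability of the $f_i$ and $g_i^2$, apply It\^o to $\ln X_i(t)$ under $\PP_\mu$, invoke Birkhoff and the martingale strong law to get $\ln X_i(t)/t\to\lambda_i(\mu)$ a.s., and rule out $\lambda_i(\mu)\ne 0$ by contradiction with stationarity. Nothing to add beyond noting that, as you flag yourself, the one real subtlety is that Lemma~\ref{lm3.2} already implicitly packages the integrability of $\Psi$, so your ``integrating against $\mu$'' step is using the HN16 machinery rather than proving it from scratch.
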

\begin{proof}
The proof is the same as the proof of \cite{HN16}[Lemma 5.1].
\end{proof}

We start by proving some general results due to \eqref{a.tight}. In view of \eqref{a.tight},
there is $M>0$
such that
\begin{equation}\label{e2.5}
\left[\dfrac{\sum c_ix_if_i(\bx)}{1+\sum c_ix_i}-\dfrac12\dfrac{\sum \sigma_{ij}c_ic_jx_ix_jg_i(\bx)g_j(\bx)}{(1+\sum c_ix_i)^2}+\gamma_b\left(1+\sum_{i=1}^n (|f_i(\bx)|+g_i^2(\bx))\right)\right]<0
\end{equation}
if $\|\bx\|\geq M$.
Since
\[
|g_i(\bx)g_j(\bx)\sigma_{ij}|\leq 2|\sigma_{ij}|(|g_i(\bx)|^2+|g_j(\bx)|^2)
\]
we can find $\delta_0\in(0,0.5\gamma_b)$ such that
\begin{equation}\label{e2.6}
3\delta_0\sum |g_i(\bx)g_j(\bx)\sigma_{ij}|+\delta_0\sum g_i^2(\bx)\leq \gamma_b\sum g_i^2(\bx)\,,\,\bx\in\R^n_+.
\end{equation}
In view of \eqref{e2.5} and \eqref{e2.6}, we have
\begin{equation}\label{e2.0}
\begin{aligned}
\dfrac{\sum c_ix_if_i(\bx)}{1+\bc^\top\bx}&-\dfrac{1}2\dfrac{\sum \sigma_{ij}c_ic_jx_ix_jg_i(\bx)g_j(\bx)}{(1+\sum c_ix_i)^2}+\gamma_b+\delta_0\sum(2|f_i(\bx)|+g^2_i(\bx))\\
&+3\delta_0\sum |g_i(\bx)g_j(\bx)\sigma_{ij}|<0\,\text{ for all }\, \|\bx\|\geq M.
\end{aligned}
\end{equation}
Using \eqref{e2.0} one can define
\begin{equation}\label{e:H}
\begin{aligned}
H:=\sup\limits_{\bx\in\R^{3}_+}\Bigg\{&\dfrac{\sum c_ix_if_i(\bx)}{1+\bc^\top\bx}-\dfrac{1}2\dfrac{\sum \sigma_{ij}c_ic_jx_ix_jg_i(\bx)g_j(\bx)}{(1+\sum c_ix_i)^2}\\
&+\gamma_b+\delta_0\sum(2|f_i(\bx)|+g_i^2(\bx))
+3\delta_0\sum |g_i(\bx)g_j(\bx)\sigma_{ij}|\Bigg\}<\infty.
\end{aligned}
\end{equation}

\begin{lm}\label{lm3.3}
Suppose the following
\begin{itemize}
\item  The sequences $(\bx_k)_{k\in N}\subset \R_+^3, (T_k)_{k\in \N}\subset \R_+$ are such that $\|\bx_k\|\leq M$, $T_k>1$ for all $k\in \N$ and $\lim_{k\to\infty}T_k=\infty$.

\item The sequence $(\Pi^{\bx_k}_{T_k})_{k\in \N}$ converges weakly to an invariant probability measure
$\pi$.

\item The function $h:\R^3_+\to\R$ is any upper semi-continuous function satisfying
$|h(\bx)|<K_h(1+\bc^\top\bx)^\delta(1+\sum_i(|f_i(\bx)|+|g_i(\bx)|^2))$, $\bx\in \R^n_+$,
for some $K_h\geq 0$, $\delta<\delta_0$.
\end{itemize}
Then one has
\begin{equation}\label{lm3.3-e1}
\lim_{k\to\infty}\int_{\R^n_+}h(\bx)\Pi^{\bx_k}_{T_k}(d\bx)\leq \int_{\R^n_+}h(\bx)\pi(d\bx).
\end{equation}
\end{lm}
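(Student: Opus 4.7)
The plan is to pair weak convergence of $\Pi^{\bx_k}_{T_k}$ to $\pi$ with a uniform integrability estimate that dominates the unbounded factor $\phi(\bx):=1+\sum_i(|f_i(\bx)|+g_i^2(\bx))$ in the bound on $|h|$. First I would introduce the Lyapunov function $V(\bx):=(1+\bc^\top\bx)^{\delta_0}$ and compute $\mathcal{L}V$ via It\^o, which after dividing by $V$ gives the identity $\mathcal{L}V/V=\delta_0 A-\delta_0(1-\delta_0)B/2$, where $A,B$ denote the drift and diffusion fractions appearing in \eqref{a.tight}. Combining this with \eqref{e2.0} (valid for $\|\bx\|\geq M$) and the trivial bound $c_ix_j\leq 1+\bc^\top\bx$ which forces $B\leq\sum|\sigma_{ij}g_ig_j|$, the coefficients conspire so that for $\|\bx\|\geq M$,
\[
\mathcal{L}V(\bx)\leq -\delta_0\gamma_b V(\bx)-\delta_0^2 V(\bx)\sum_i(2|f_i(\bx)|+g_i^2(\bx)).
\]
On $\{\|\bx\|\leq M\}$ everything is bounded by continuity, so there exist constants $C_1,C_2>0$ with
\[
\mathcal{L}V(\bx)\leq -C_1 V(\bx)\phi(\bx)+C_2\qquad\text{for all }\bx\in\R^3_+.
\]

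Applying Dynkin's formula, dividing by $T_k$, and using $\|\bx_k\|\leq M$ and $T_k\geq 1$ produces the uniform occupation bound
\[
\int_{\R^3_+} V(\bx)\phi(\bx)\,\Pi^{\bx_k}_{T_k}(d\bx)\leq \frac{\sup_{\|\bz\|\leq M}V(\bz)+C_2T_k}{C_1 T_k}\leq K
\]
for some $k$-independent $K$. Furthermore, since $V\phi$ is continuous, testing weak convergence against the bounded continuous truncations $(V\phi)\wedge N$ and letting $N\to\infty$ by monotone convergence yields $\int V\phi\,d\pi\leq K$ as well.

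The main step is a truncation argument. Set $h_R(\bx):=(h(\bx)\wedge R)\vee(-R)$; since $h$ is upper semicontinuous, so is $h_R$, and $h_R$ is bounded. By the portmanteau theorem for bounded upper semicontinuous functions,
\[
\limsup_{k\to\infty}\int h_R\,d\Pi^{\bx_k}_{T_k}\leq \int h_R\,d\pi.
\]
For the tail, rewrite the hypothesis as $|h(\bx)|\leq K_h V(\bx)\phi(\bx)(1+\bc^\top\bx)^{\delta-\delta_0}$. Because $\delta<\delta_0$, for any $\varepsilon>0$ we can pick $L$ with $(1+\bc^\top\bx)^{\delta-\delta_0}\leq\varepsilon$ on $\{\|\bx\|\geq L\}$, and then pick $R>\sup_{\|\bx\|\leq L}|h(\bx)|$ (finite by continuity of $f_i,g_i$), so that $\{|h|>R\}\subset\{\|\bx\|>L\}$. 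Thus
\[
\int|h|\,\mathbf{1}_{|h|>R}\,d\Pi^{\bx_k}_{T_k}\leq \varepsilon K_h \int V\phi\,d\Pi^{\bx_k}_{T_k}\leq \varepsilon K_h K,
\]
and the same bound holds with $\pi$ in place of $\Pi^{\bx_k}_{T_k}$, so $\int|h-h_R|\,d\pi\to 0$ as $R\to\infty$ by dominated convergence. Combining,
\[
\limsup_{k\to\infty}\int h\,d\Pi^{\bx_k}_{T_k}\leq \limsup_{k\to\infty}\int h_R\,d\Pi^{\bx_k}_{T_k}+\varepsilon K_h K\leq \int h\,d\pi + 2\varepsilon K_h K,
\]
and letting $\varepsilon\to 0$ yields the claim.

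The only delicate point I anticipate is calibrating the Lyapunov inequality in the first paragraph: the exponent $\delta_0$ must be small enough (per \eqref{e2.6}) that the quadratic variation term $\delta_0(1-\delta_0)B/2$ does not spoil the negativity gained from \eqref{a.tight}. The definition of $\delta_0$ in \eqref{e2.6} is precisely engineered so that the surplus $\delta_0^2 B/2$ is absorbed into the $-3\delta_0\sum|\sigma_{ij}g_ig_j|$ slack, leaving the clean bound above; everything else is standard machinery (Dynkin, portmanteau, and dominated convergence).
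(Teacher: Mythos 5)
Your proof is correct and takes essentially the same route the paper delegates to \cite{HN16}: derive a uniform occupation-measure bound on $V\phi$ via a Lyapunov function and Dynkin's formula (this is the ``uniform bound'' of HN16's Lemma~3.3), then truncate and apply the portmanteau theorem for upper semi-continuous functions (HN16's Lemma~3.4). The exponent calculus leveraging \eqref{e2.0}, the tail estimate via $(1+\bc^\top\bx)^{\delta-\delta_0}\to 0$, and the USC portmanteau are all correctly calibrated, so this is a faithful fleshing-out of the paper's terse proof sketch.
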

\begin{proof}
If the function $h$ is bounded and upper continuous,
\eqref{lm3.3-e1} is obtained from the Portmanteau theorem.
In case $h$ satisfies $|h(\bx)|<K_h(1+\bc^\top\bx)^\delta(1+\sum_i(|f_i(\bx)|+|g_i(\bx)|^2))$, $\bx\in \R^n_+$,
for some $K_h\geq 0$, $\delta<\delta_0$,
we use the uniform bound in \cite[Lemma 3.3]{HN16} the truncated arguments
in \cite[Lemma 3.4]{HN16} to obtain \eqref{lm3.3-e1}.
The details are omitted here.
\end{proof}
It is easy to show that, there exist $p_1, p_2, p_3>0$ such that
\begin{equation}\label{c1e1}
\sum_{i=1}^3p_i\lambda_i(\pi)>0, \pi\in\{\mu_1,\mu_2,\mu_{13},\mu_{23}\}.
\end{equation}

Let $p_0$ be sufficiently large (compared to $p_1,p_2,p_3$) such that
\begin{equation}\label{e2.3}
p_0\min\{\lambda_1(\bdelta^*),\lambda_2(\bdelta^*)\}+\sum_{i=1}^3p_i\lambda_i(\bdelta^*)>0.
\end{equation}

By rescaling $p_0,\dots,p_3$,
we can assume that $\sum_{i=0}^3 p_i\leq\dfrac{\delta_0}4.$
Let
\begin{equation}\label{e2.3}
2\rho^*:=\min\left\{p_0\min\{\lambda_1(\bdelta^*),\lambda_2(\bdelta^*)\}+\sum_{i=1}^3p_i\lambda_i(\bdelta^*), \sum_{i=1}^3p_i\lambda_i(\pi), \pi\in\{\mu_1,\mu_2,\mu_{13},\mu_{23}\}\right\}>0
\end{equation}
and
$P_{\delta}=\left\{\hat \bp:=(\hat p_0,\cdots,\hat p_3)\in\R^4: |\hat p_0|+|\hat p_1|+|\hat p_2|+|\hat p_3|\leq\dfrac{\delta_0}4\right\}$.
For any $\hat \bp$ define the function $V_{\hat\bp}: \R^{3,\circ}_+\to\R_+$ by
\begin{equation}\label{e:V}
V_{\hat \bp}(\bx)=\dfrac{1+\bc^\top\bx}{(x_1+x_2)^{\hat p_0}\prod_{i=1}^3 x_i^{\hat p_i}}.
\end{equation}
Note that if
\[
Z:=\ln  V_{\hat \bp} = \ln(1+\bc^\top\bx) -\hat p_0 \ln(x_1+x_2) -\sum_{i=1}^3 \hat p_i\ln x_i
\]
 then we can write $V_{\hat \bp}^{\delta_0} = e^{\delta_0Z}$. Taking derivatives yields $$\frac{\partial\left( V_{\hat \bp}^{\delta_0}\right)}{\partial x_i}=\delta_0V_{\hat \bp} \frac{\partial Z}{\partial x_i}$$ and
\[
\frac{\partial^2\left( V_{\hat \bp}^{\delta_0}\right)}{\partial x_i\partial x_j}=\delta_0V_{\hat \bp} \left(\delta_0 \frac{\partial Z}{\partial x_i} \frac{\partial Z}{\partial x_j} +  \frac{\partial^2 Z}{\partial x_i \partial x_j}\right).
\]
Using these expressions and the definition of the generator $\Lom$ one can show, after some computations, that
\begin{equation}\label{lv_delta}
\begin{aligned}
\Lom V_{\hat \bp}^{\delta_0}(\bx)=\delta_0V_{\hat \bp}^{\delta_0}(\bx)\bigg[&\dfrac{\sum_{i=1}^3 c_ix_if_i(\bx)}{1+\bc^\top\bx}+\dfrac{\delta_0-1}2\dfrac{\sum_{i,j=1}^3 \sigma_{ij}c_ic_jx_ix_jg_i(\bx)g_j(\bx)}{(1+\bc^\top\bx)^2}\\
&-\sum_{i=1}^3\left(\hat p_if_i(\bx)-\dfrac{\hat p_ig_i^2(\bx)\sigma_{ii}}2\right)\\
&-\hat p_0\left(\dfrac{x_1f_1(\bx)+x_2f_2(\bx)}{x_1+x_2}-\dfrac{\sum_{i,j=1}^2\sigma_{ij}x_ix_jg_i(\bx)g_j(\bx)}{2(x_1+x_2)^2}\right)\\
&+\dfrac{\delta_0}2\hat p_0^2\dfrac{\sum_{i,j=1}^2\sigma_{ij}x_ix_jg_i(\bx)g_j(\bx)}{(x_1+x_2)^2}\\
&+\delta_0\hat p_0\sum_{i=1}^2\sum_{j=1}^3\sigma_{ij}\dfrac{x_ig_i(\bx)}{x_1+x_2}g_j(\bx)-\delta_0\sum_{i=1}^3\dfrac{c_i\hat p_ix_i\sigma_{ij}g_i(\bx)g_j(\bx)}{(1+\bc^\top\bx)}
\\&+\dfrac{\delta_0}2\sum_{i=1}^3 \hat p_i\hat p_j\sigma_{ij}g_i(\bx)g_j(\bx)-\delta_0p_0\sum_{i=1}^2\sum_{j=1}^3\sigma_{ij}\dfrac{x_ic_jx_jg_i(\bx)g_j(\bx)}{(x_1+x_2)(1+\bc^\top\bx)}\bigg].
\end{aligned}
\end{equation}
In virtue of \eqref{e2.0}, we have
\begin{equation}\label{e2.8}
\Lom V_{\hat \bp}^{\delta_0}(\bx)<-\gamma_b\delta_0 V^{ \delta_0}_{\hat\bp}(\bx) \text{ for } \bx\in\R_+^{3,\circ},  \|\bx\|>M, \hat\bp\in P_{\delta_0}.
\end{equation}
Analogously, using \eqref{e:H}
\begin{equation}\label{e2.9}
\Lom V_{\hat\bp}^{ \delta_0}(\bx)<H\delta_0 V^{\delta_0}_{\hat\bp}(\bx),\bx\in\R^{3,\circ}_+, \hat\bp\in P_{\delta_0}.
\end{equation}

Let $\bp=(p_0,\cdots,p_3)$ satisfy \eqref{e2.3} and
consider the function
$$
V(\bx):=V_{ \bp}(\bx)=\dfrac{1+\bc^\top\bx}{(x_1+x_2)^{ p_0}\prod_{i=1}^3 x_i^{ p_i}}.
$$
Let $y_i=\dfrac{x_i}{x_1+x_2}, i=1,2$. Since $y_1+y_2=1$ we have the following estimate
$$
\begin{aligned}
&\dfrac{x_1f_1(\bx)+x_2f_2(\bx)}{x_1+x_2}-\dfrac{\sum_{i,j=1}^2\sigma_{ij}x_ix_jg_i(\bx)g_j(\bx)}{2(x_1+x_2)^2}\\
&\qquad=\sum_{i=1}^2y_i\left(f_i(\bx)-\dfrac{g_i^2(\bx)\sigma_{ii}}2\right) -y_1y_2\sigma_{12}g_1(\bx)g_2(\bx) \\
&\qquad=\sum_{i=1}^2y_i\left(f_i(\bx)-\dfrac{g_i^2(\bx)\sigma_{ii}}2\right)+\dfrac12(y_1+y_2)\left(y_1g_1^2(\bx)\sigma_{11}+y_2g_2^2(\bx)\sigma_{22}\right)\\
&\qquad\qquad+\dfrac12\left(-y_1^2g_1^2(\bx)\sigma_{11}-y_2^2g_2^2(\bx)\sigma_{11}-2y_1y_2\sigma_{12}g_1(\bx)g_2(\bx)\right)\\
&\qquad=y_1\left(f_1(\bx)-\dfrac{g_1^2}2(\bx)\right) + y_2\left(f_2(\bx)-\dfrac{g_2^2}2(\bx)\right)
+\dfrac12y_1y_2\left(g_1^2(\bx)\sigma_{11}+g_2^2(\bx)\sigma_{22}-2g_1(\bx)g_2(\bx)\sigma_{12}\right).
\end{aligned}
$$
Since $(g_i(\bx)g_j(\bx)\sigma_{ij})_{3\times 3}$ is positive definite it is clear that $$g_1^2(\bx)\sigma_{11}+g_2^2(\bx)\sigma_{22}-2\sigma_{12}g_1(\bx)g_2(\bx)\geq 0.$$
Thus, we have
\begin{equation}\label{e2.4}
 \dfrac{x_1f_1(\bx)+x_2f_2(\bx)}{x_1+x_2}-\dfrac{\sum_{i,j=1}^2\sigma_{ij}x_ix_jg_i(\bx)g_j(\bx)}{2(x_1+x_2)^2}
\geq \min\left\{f_1(\bx)-\dfrac{\sigma_{11}g_1^2(\bx)}2, f_2(\bx)-\frac{\sigma_{22}g_2^2(\bx)}2\right\}.
\end{equation}
Define $\Phi:\R_+^{3}\setminus\{(x_1,x_2,x_3)\in\R_+^3~|~x_1+x_2=0\}\mapsto\R$ by
\begin{equation*}
\begin{aligned}
\Phi(\bx)=&\dfrac{\sum_{i=1}^3 c_ix_if_i(\bx)}{1+\bc^\top\bx}-\dfrac{1}2\dfrac{\sum_{i, j=1}^3 \sigma_{ij}c_ic_jx_ix_jg_i(\bx)g_j(\bx)}{(1+\bc^\top\bx)^2}-\sum_{i=1}^3\left(p_if_i(\bx)-\dfrac{p_ig_i^2(\bx)\sigma_{ii}}2\right)\\
&-p_0\left(\dfrac{x_1f_1(\bx)+x_2f_2(\bx)}{x_1+x_2}-\dfrac{\sum_{i,j=1}^2\sigma_{ij}x_ix_jg_i(\bx)g_j(\bx)}{2(x_1+x_2)^2}\right).
\end{aligned}
\end{equation*}
Let $\hat\Phi:\R^3_+\mapsto\R$ be the function
\begin{equation}\label{ehatphi}
\begin{aligned}
\hat\Phi(\bx)=&\dfrac{\sum_{i=1}^3 c_ix_if_i(\bx)}{1+\bc^\top\bx}-\dfrac{1}2\dfrac{\sum_{i,j=1}^3 \sigma_{ij}c_ic_jx_ix_jg_i(\bx)g_j(\bx)}{(1+\bc^\top\bx)^2}-\sum_{i=1}^3\left(p_if_i(\bx)-\dfrac{p_ig_i^2(\bx)\sigma_{ii}}2\right)\\
&-p_0\min\left\{f_1(\bx)-\dfrac{\sigma_{11}g_1^2(\bx)}2, f_2(\bx)-\frac{\sigma_{22}g_2^2(\bx)}2\right\}.
\end{aligned}
\end{equation}
Define $\widetilde\Phi:\R^3_+\mapsto\R$ by
$$
\widetilde\Phi(\bx)=
\begin{cases}
&U(\bx),  \text{ if } ~~~x_1+x_2=0.\\
&\Phi(\bx),  \text{ if } ~~~x_1+x_2\ne 0.
\end{cases}
$$
In view of \eqref{e2.4}, $\widetilde\Phi(\bx)$ is an upper semi-continuous function.

Let $n^*\in\N$ such that
\begin{equation}\label{e:n*}
\gamma_b(n^*-1)>H.
\end{equation}
\begin{lm}\label{lm3.1}
Suppose that Assumptions \ref{a.coexn} and \ref{a.extn2} hold. Let $\bp$ and $\rho^*$ be as in \eqref{e2.3}.
There exists a $T^*>0$ such that, for any $T>T^*$, $\bx\in\partial\R^3_+, \|\bx\|\leq M$ one has
\begin{equation}\label{lm3.1-e1}
\dfrac1T\int_0^T\E_\bx\widetilde\Phi(\BX(t))dt\leq-\rho^*.
\end{equation}
As a corollary,
there is a $\tilde\delta>0$ such that
\begin{equation}\label{lm3.1-e2}\dfrac1T\int_0^T\E_\bx\Phi(\BX(t))dt\leq-\dfrac34\rho^*, T\in[T^*,n^*T^*]
\end{equation}
for any $\bx\in\R^{3,\circ}_+$ satisfying $\|\bx\|\leq M$ and dist$(\bx,\partial\R^3_+)<\tilde\delta.$
\end{lm}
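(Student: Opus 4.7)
The plan is to prove~\eqref{lm3.1-e1} by contradiction and then deduce the corollary~\eqref{lm3.1-e2} by an upper semi-continuity argument. Suppose there exist sequences $\bx_k\in\partial\R^3_+$ with $\|\bx_k\|\le M$ and $T_k\to\infty$ such that $\frac{1}{T_k}\int_0^{T_k}\E_{\bx_k}\widetilde\Phi(\BX(t))\,dt > -\rho^*$ for every $k$. By the Lyapunov bound in Assumption~\ref{a.nonde}(3) the family of expected occupation measures $\{\Pi_{T_k}^{\bx_k}\}$ is tight on $\R^3_+$, and combined with the Feller property from Lemma~\ref{lm2.0} a standard martingale-problem argument lets one pass to a subsequence with $\Pi_{T_k}^{\bx_k}\Rightarrow\pi$ for an invariant probability measure $\pi$. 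Since each $\bx_k$ lies on the invariant set $\partial\R^3_+$ (Lemma~\ref{lm2.0}), $\pi$ is concentrated on $\partial\R^3_+$.

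The second step invokes Lemma~\ref{lm3.3} with $h=\widetilde\Phi$. Upper semi-continuity of $\widetilde\Phi$ holds on all of $\R^3_+$: away from $\{x_1+x_2=0\}$ the function equals the continuous $\Phi$, while at any $\bx_0$ with $x_1+x_2=0$ the inequality~\eqref{e2.4} gives $\widetilde\Phi(\bx)=\Phi(\bx)\le\hat\Phi(\bx)$ for $x_1+x_2>0$, and continuity of $\hat\Phi$ together with $\widetilde\Phi(\bx_0)=\hat\Phi(\bx_0)$ yields $\limsup_{\bx\to\bx_0}\widetilde\Phi(\bx)\le\widetilde\Phi(\bx_0)$. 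The growth bound in Lemma~\ref{lm3.3} holds with $\delta=0<\delta_0$, since each summand of $\widetilde\Phi$ is dominated by a constant multiple of $1+\sum_i(|f_i|+g_i^2)$. Lemma~\ref{lm3.3} therefore forces $-\rho^*\le\int\widetilde\Phi\,d\pi$. Using the ergodic decomposition $\pi=\sum_{\nu\in\M}c_\nu\,\nu$, where $\M\subset\{\bdelta^*,\mu_1,\mu_2,\mu_{13},\mu_{23}\}$ by Assumption~\ref{a:2p1p}, I compute $\int\widetilde\Phi\,d\nu$ termwise. The first bracket of $\widetilde\Phi$ vanishes by the first identity of Lemma~\ref{lm3.2}; the linear combination $-\sum_i p_i(f_i-\sigma_{ii}g_i^2/2)$ integrates to $-\sum_i p_i\lambda_i(\nu)$ by definition of $\lambda_i(\nu)$; and for $\nu\neq\bdelta^*$ the $p_0$-bracket integrates to $0$ by the second identity of Lemma~\ref{lm3.2}, while at $\nu=\bdelta^*$ it evaluates to $-p_0\min\{\lambda_1(\bdelta^*),\lambda_2(\bdelta^*)\}$ from the definition of $\hat\Phi$. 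The choice of $(\bp,\rho^*)$ in~\eqref{e2.3} then gives $\int\widetilde\Phi\,d\nu\le-2\rho^*$ for every ergodic $\nu\in\M$, whence $\int\widetilde\Phi\,d\pi\le-2\rho^*$, contradicting the earlier inequality.

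For the corollary, since $\widetilde\Phi=\Phi$ on $\R^{3,\circ}_+$, it suffices to upgrade the bound from part one to a $\tilde\delta$-neighborhood of $\partial\R^3_+\cap\{\|\bx\|\le M\}$. Feller continuity combined with upper semi-continuity of $\widetilde\Phi$ (approximate $\widetilde\Phi$ from above by a decreasing sequence of bounded continuous functions and invoke monotone convergence) makes $\bx\mapsto\E_\bx\widetilde\Phi(\BX(t))$ upper semi-continuous for each fixed $t$; Fubini and dominated convergence then render $G(\bx,T):=\frac{1}{T}\int_0^T\E_\bx\widetilde\Phi(\BX(t))\,dt$ upper semi-continuous jointly in $(\bx,T)$ on $\R^3_+\times(0,\infty)$. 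The compact set $K:=(\partial\R^3_+\cap\{\|\bx\|\le M\})\times[T^*,n^*T^*]$ satisfies $G\le-\rho^*$ on $K$ by~\eqref{lm3.1-e1}, so the open set $\{G<-\tfrac{3}{4}\rho^*\}$ contains $K$, and the Lebesgue number lemma produces $\tilde\delta>0$ with the required property.

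The principal obstacle throughout is the directional singularity of $\Phi$ on the axis $\{x_1=x_2=0\}$: it forces the replacement of $\Phi$ by the regularization $\widetilde\Phi$ via $\hat\Phi$, it makes inequality~\eqref{e2.4} indispensable both for upper semi-continuity of $\widetilde\Phi$ at axis points and for evaluating $\hat\Phi(\0)=-\sum_i p_i\lambda_i(\bdelta^*)-p_0\min\{\lambda_1(\bdelta^*),\lambda_2(\bdelta^*)\}$, and it is precisely this evaluation that links the ergodic computation for $\nu=\bdelta^*$ to the first term in~\eqref{e2.3}, guaranteeing the strict sign.
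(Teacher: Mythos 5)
Your proof is correct and follows the same strategy as the paper's: a weak-convergence contradiction argument for \eqref{lm3.1-e1} using tightness of the occupation measures, Lemma~\ref{lm3.3}, the ergodic decomposition, and the identities of Lemmas~\ref{lm3.2} and~\ref{l:lyapunov}; then extension to a $\tilde\delta$-neighborhood for the corollary. The one place you genuinely diverge is the corollary step: the paper splits the boundary into $\{x_1+x_2=0\}$ (where it works with the continuous $\hat\Phi$) and $\{x_1+x_2\neq 0\}$ (where $\widetilde\Phi=\Phi$ is already continuous), establishing separate neighborhoods and intersecting them, whereas you establish upper semi-continuity of $\widetilde\Phi$ globally via $\hat\Phi$ and then run a single compactness argument on $(\partial\R^3_+\cap\{\|\bx\|\le M\})\times[T^*,n^*T^*]$. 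This unification is cleaner conceptually, though it shifts a bit more weight onto the step ``upper semi-continuous $\widetilde\Phi$ $\Rightarrow$ $G(\bx,T)$ upper semi-continuous,'' which, as $\widetilde\Phi$ is unbounded above, requires a truncation/uniform-integrability argument along the lines of Lemma~\ref{lm3.3} to make the monotone-approximation rigorous (the paper dodges this by invoking Feller continuity for the continuous $\hat\Phi$, but faces the same integrability issue implicitly). Also note the small slip: what you need at the end is the tube lemma (or a direct compactness argument covering the compact set by finitely many basic open boxes), not the Lebesgue number lemma per se.
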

\begin{proof}
We argue by contradiction to obtain \eqref{lm3.1-e1}. Suppose that the conclusion of this lemma is not true.
Then, we can find $\bx_k\in\partial\R^3_+, \|\bx_k\|\leq M$
and $T_k>0$, $\lim_{k\to\infty} T_k=\infty$
such that
\begin{equation}\label{e3.9}
\dfrac1T_k\int_0^{T_k}\E_{\bx_k}\widetilde\Phi(\BX(t))dt>-\rho^*\,,\,k\in\N.
\end{equation}
Define the measures $\Pi^{\bx}_{t}$ by
\[
\Pi^{\bx}_{t}(d\by):=\frac{1}{t}\int_0^t\PP_{\bx}\{\BX(s)\in d\by\}\,ds.
\]
It follows from \cite[Lemma 4.1]{HN16} that $\left(\Pi^{\bx_k}_{T_k}\right)_{k\in \N}$ is tight. As a result
$\left(\Pi^{\bx_k}_{T_k}\right)_{k\in \N}$ has a convergent subsequence in the weak$^*$-topology.
Without loss of generality, we can suppose that $\left(\Pi^{\bx_k}_{T_k}\right)_{k\in \N}$
is a convergent sequence in the weak$^*$-topology.
It can be shown (see Lemma 4.1 from \cite{HN16} or Theorem 9.9 from \cite{EK09}) that its limit is an invariant probability measure $\mu$ of $\BX$. Since $\bx_k\in\partial\R^3_+$, the support of $\mu$ lies in $\partial \R_+^3$.
As a consequence of Lemma \ref{lm3.3}
$$\lim_{k\to\infty}\dfrac1T_k\int_0^{T_k}\E_{\bx_k}\widetilde\Phi(\BX(t))dt\leq\int_{\R^3_+}\widetilde\Phi(\bx)\mu(d\bx).$$
Using Lemmas \ref{lm3.2} and \ref{l:lyapunov},  together with equation \eqref{e2.3} we get that
$$\lim_{k\to\infty}\dfrac1T_k\int_0^{T_k}\E_{\bx_k}\widetilde\Phi(\BX(t))dt\leq -2\rho^*.$$
This contradicts \eqref{e3.9}, which means \eqref{lm3.1-e1} is proved.

With $\hat\Phi$ defined in \eqref{ehatphi}, we have $\hat\Phi(\bx)\geq\Phi(\bx)$ for $x_1+x_2\ne0$ and $\hat\Phi(\bx)=\widetilde\Phi(\bx)$ if $x_1+x_2=0$. As a result of \eqref{e2.3}
Thus
\begin{equation}
 \dfrac1T\int_0^T\E_{(0,0,x_3)}\hat\Phi(\BX(t))dt=
\dfrac1T\int_{0}^T\E_{(0,0,x_3)}
\widetilde\Phi(\BX(t))dt\leq-\rho^*, x_3\leq M, T\geq T^*.
\end{equation}
Due to the Feller property of $(\BX(t))$ on $\R^3_+$ and the continunity of $\hat\Phi$ on $\R^3_+$, there is an $\hat\eps>0$ such that
\begin{equation}\label{lm3.1-e5}
 \dfrac1T\int_0^T\E_{\bx}\hat\Phi(\BX(t))dt\leq -\frac34\rho^*,\text{ if } x_1+x_2\leq\hat\eps, \bx\in\partial\R^3_+,\,\|\bx\|\leq M, T\in[T^*,n^*T^*].
\end{equation}
Together with $\Phi(\bx)\leq\hat\Phi(\bx), x_1+x_2\ne0,$ this implies
$$
 \dfrac1T\int_0^T\E_{\bx}\Phi(\BX(t))dt\leq -\frac34\rho^*,\,\, \bx\in\R^{3,\circ}_+,x_1+x_2\leq\hat\eps, \|\bx\|\leq M, T\in[T^*,n^*T^*].
$$
If $x_1+x_2\ne 0$, then $$\PP_{\bx}\left\{\widetilde\Phi(\BX(t))=\Phi(\BX(t)), t\geq 0 \right\}=1.$$
Using the Feller property of $(\BX(t))$ on $\{(x_1,x_2,x_3)\in\R_+^3~|~x_1+x_2\ne 0\}$, equation \eqref{lm3.1-e1} and the continuity of $\Phi(t)=\wtd\Phi(t)$ on $\{(x_1,x_2,x_3)\in\R_+^3~|~x_1+x_2\ne 0\}$ one can see that there exists $\tilde\delta\in(0,\hat\eps)$ for which
\begin{equation}\label{lm3.1-e6}
 \dfrac1T\int_0^T\E_{\bx}\Phi(\BX(t))dt\leq -\frac34\rho^*, \bx\in\R^{3,\circ}_+, x_1+x_2\geq\hat\eps, \|\bx\|\leq M, \text{dist}(\bx,\partial\R^3_+)<\tilde\delta, T\in[T^*,n^*T^*].
\end{equation}
Combining \eqref{lm3.1-e5} and \eqref{lm3.1-e6} yields \eqref{lm3.1-e2}.

\end{proof}
\begin{lm}\label{lm2.5}
Let $Y$ be a random variable, $\theta_0>0$ a constant, and suppose $$\E \exp(\theta_0 Y)+\E \exp(-\theta_0 Y)\leq K_1.$$
Then the log-Laplace transform
$\phi(\theta)=\ln\E\exp(\theta Y)$
is twice differentiable on $\left[0,\frac{\theta_0}2\right)$ and
$$\dfrac{d\phi}{d\theta}(0)= \E Y,$$
$$0\leq \dfrac{d^2\phi}{d\theta^2}(\theta)\leq K_2\,, \theta\in\left[0,\frac{\theta_0}2\right)$$
 for some $K_2>0$ depending only on $K_1$.
\end{lm}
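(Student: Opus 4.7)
The plan is a standard moment-generating-function calculation: establish finiteness of $\E[|Y|^k e^{\theta Y}]$ for $k\in\{0,1,2\}$ on the interval of interest, justify differentiation under the expectation by dominated convergence, compute $\phi'$ and $\phi''$ via the chain rule, and identify $\phi''$ as the variance under an exponentially tilted measure.

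First, I would prove a domination lemma: for every $k\in\{0,1,2\}$ and every $\theta\in[0,\theta_0/2]$,
\[
|y|^k e^{\theta y} \;\le\; C_k\bigl(e^{\theta_0 y}+e^{-\theta_0 y}\bigr) \qquad \text{for all } y\in\R,
\]
where $C_k$ depends only on $k$ and $\theta_0$. This follows by splitting into $y\ge 0$ and $y\le 0$: in the first case $|y|^k e^{\theta y}\le y^k e^{\theta_0 y/2}\le C_k e^{\theta_0 y}$ since $y\mapsto y^k e^{-\theta_0 y/2}$ is bounded on $[0,\infty)$, and similarly in the second case. Integrating against the law of $Y$ and using the hypothesis yields $\E[|Y|^k e^{\theta Y}]\le C_k K_1$ uniformly in $\theta\in[0,\theta_0/2]$.

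With this uniform domination in hand, two applications of the dominated convergence theorem show that $M(\theta):=\E[e^{\theta Y}]$ belongs to $C^2([0,\theta_0/2))$ with $M'(\theta)=\E[Y e^{\theta Y}]$ and $M''(\theta)=\E[Y^2 e^{\theta Y}]$. Since $M>0$, $\phi=\ln M$ is also $C^2$ and
\[
\phi'(\theta)=\frac{M'(\theta)}{M(\theta)}, \qquad \phi''(\theta)=\frac{M''(\theta)}{M(\theta)}-\left(\frac{M'(\theta)}{M(\theta)}\right)^{2}.
\]
Evaluating at $\theta=0$ (where $M(0)=1$) gives $\phi'(0)=\E Y$. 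To see $\phi''(\theta)\ge 0$, I would recognize the right-hand side as the variance of $Y$ under the exponentially tilted probability measure $d\tilde\PP_\theta = e^{\theta Y}/M(\theta)\,d\PP$.

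For the upper bound $\phi''(\theta)\le K_2$, I need to control $M''(\theta)/M(\theta)$ from above uniformly on $[0,\theta_0/2)$. The numerator is bounded by $C_2 K_1$ by the first step. For the denominator, Jensen's inequality applied to $e^{\pm\theta_0 Y}$ gives $|\E Y|\le (\ln K_1)/\theta_0$, and then Jensen once more yields $M(\theta)\ge e^{\theta\E Y}\ge e^{-\theta_0|\E Y|/2}\ge K_1^{-1/2}$ for all $\theta\in[0,\theta_0/2)$. Setting $K_2:=C_2 K_1^{3/2}$ (or any similar explicit expression in $K_1$ and $\theta_0$) completes the proof. The only delicate point, rather than an obstacle, is that the choice of the half-interval $[0,\theta_0/2)$ is exactly what provides the slack needed for the polynomial factor $|Y|^2$ to be absorbed into the exponentials $e^{\pm\theta_0 Y}$ uniformly in $\theta$.
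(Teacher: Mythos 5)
Your proof is correct and gives exactly the standard self-contained argument one would expect: establish uniform domination of $|Y|^k e^{\theta Y}$ by $e^{\theta_0 Y}+e^{-\theta_0 Y}$ using the slack afforded by the factor $1/2$, differentiate under the integral sign by dominated convergence, and then bound $\phi'' = M''/M - (M'/M)^2$ by controlling the numerator via the domination estimate and the denominator via Jensen. The paper itself does not prove the lemma; it simply cites Lemma~3.5 of \cite{HN16}, so there is no internal proof to compare against, but your argument is the canonical one and is what such a reference would contain.

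One small but worth-noting imprecision: your final constant $K_2 = C_2 K_1^{3/2}$ depends on $\theta_0$ through $C_2$, while the lemma as stated claims $K_2$ ``depends only on $K_1$.'' This dependence on $\theta_0$ is in fact unavoidable --- for instance take $Y = \pm a$ with probability $1/2$ each and $\theta_0 a = 1$, so that $K_1 = 2\cosh(1)$ is fixed but $\phi''(0) = a^2 = 1/\theta_0^2 \to \infty$ as $\theta_0 \to 0$. So this is a slight inaccuracy in the lemma's phrasing (in the paper's actual application $\theta_0 = \delta_0$ is a fixed constant, so nothing goes wrong), not a defect in your proof.
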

\begin{proof}
See Lemma 3.5 in \cite{HN16}.
\end{proof}
\begin{prop}\label{prop2.1}
Let $V$ be defined by \eqref{e:V} with $\bp$ and $\rho^*$ satisfying \eqref{e2.3} and $T^*>0$ satisfying the assumptions of Lemma \ref{lm3.1}.
There are $\theta\in\left(0,\frac{\delta_0}2\right)$, $K_\theta>0$, such that for any $T\in[T^*,n^*T^*]$ and $\bx\in\R^{3,\circ}_+, \|\bx\|\leq M$,
$$\E_\bx V^\theta(\BX(T))\leq \exp(-0.5\theta \rho^*T) V^\theta(\bx)+K_\theta.$$
\end{prop}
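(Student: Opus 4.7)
The plan is to realize $\E_\bx V^\theta(\BX(T)) = V^\theta(\bx)\,\E_\bx \exp(\theta Y_T)$ with $Y_T := \log V(\BX(T)) - \log V(\bx)$, and to analyze the log-Laplace transform $\phi(\theta) := \log \E_\bx \exp(\theta Y_T)$ by combining Lemma \ref{lm2.5} with Lemma \ref{lm3.1}. Applying It\^{o}'s formula to $Z := \log V$---equivalently, reading off the $\delta_0$-independent terms inside the bracket of \eqref{lv_delta} with $\hat\bp = \bp$---produces the decomposition
\begin{equation*}
Y_T = \int_0^T \Phi(\BX(s))\,ds + N_T,
\end{equation*}
where $N$ is a local martingale and $\Phi$ is exactly the function whose time-average is controlled by Lemma \ref{lm3.1}.

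The first step is to establish uniform two-sided exponential moment bounds on $Y_T$. Since the rescaling of $\bp$ enforces $\sum_{i=0}^3 p_i \leq \delta_0/4$, both $\hat\bp = \pm\bp$ lie in $P_{\delta_0}$, and applying Dynkin's formula together with Gronwall to \eqref{e2.9} gives $\E_\bx V_{\pm\bp}^{\delta_0}(\BX(T)) \leq \exp(H\delta_0 n^* T^*)\, V_{\pm\bp}^{\delta_0}(\bx)$ for $T \in [T^*, n^*T^*]$. Because $V_{\pm\bp}^{\delta_0}$ and $V^{\pm\delta_0}$ differ only by the factor $(1+\bc^\top\bx)^{\pm 2\delta_0}$, which is bounded above and below on $\{\|\bx\|\leq M\}$, this yields
\begin{equation*}
\E_\bx \exp(\pm\delta_0 Y_T) \leq K_1
\end{equation*}
uniformly in $\bx$ with $\|\bx\|\leq M$ and $T \in [T^*, n^*T^*]$. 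Via a standard localization argument, these two-sided exponential bounds also upgrade $N$ to a true martingale, so $\E_\bx N_T = 0$ and hence $\E_\bx Y_T = \E_\bx \int_0^T \Phi(\BX(s))\,ds$. Lemma \ref{lm2.5} then supplies the Taylor-type estimate
\begin{equation*}
\phi(\theta) \leq \theta\, \E_\bx Y_T + \tfrac{K_2}{2}\theta^2, \qquad \theta \in \bigl[0,\tfrac{\delta_0}{2}\bigr),
\end{equation*}
with $K_2$ depending only on $K_1$.

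To conclude, I would split according to the distance of $\bx$ from $\partial\R^3_+$ using the threshold $\tilde\delta$ from Lemma \ref{lm3.1}. If $\|\bx\|\leq M$ and $\dist(\bx,\partial\R^3_+) < \tilde\delta$, then \eqref{lm3.1-e2} forces $\E_\bx Y_T \leq -\tfrac{3}{4}\rho^* T$ on $T\in[T^*, n^*T^*]$, and choosing $\theta := \min\{\delta_0/2,\, \rho^* T^*/(2K_2)\}$ lets the quadratic error absorb into a quarter of the linear gain, producing $\phi(\theta) \leq -\tfrac{1}{2}\theta\rho^* T$ and hence $\E_\bx V^\theta(\BX(T)) \leq \exp(-\tfrac{1}{2}\theta\rho^* T)\, V^\theta(\bx)$. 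If instead $\dist(\bx,\partial\R^3_+) \geq \tilde\delta$ and $\|\bx\|\leq M$, then $V^\theta(\bx)$ is uniformly bounded above by a constant depending only on $\tilde\delta$, $M$, $\theta$, and another application of \eqref{e2.9} with Gronwall gives $\E_\bx V^\theta(\BX(T)) \leq K_\theta$ for a suitable constant. Adding the two cases produces the claimed inequality. The main technical subtlety is securing the \emph{lower} exponential bound $\E_\bx \exp(-\delta_0 Y_T) \leq K_1$---which is precisely what the normalization $\sum p_i \leq \delta_0/4$ was arranged to permit via the choice $\hat\bp = -\bp \in P_{\delta_0}$ in \eqref{e2.9}---since without a two-sided control of $Y_T$ one cannot invoke Lemma \ref{lm2.5} to justify the crucial Taylor expansion of $\phi(\theta)$ around $\theta = 0$.
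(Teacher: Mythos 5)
Your proposal is correct and follows essentially the same route as the paper's proof: decompose $\log V(\BX(T))-\log V(\bx)$ via It\^{o}'s formula into a time-average of $\Phi$ plus a (local) martingale, get two-sided exponential moment bounds by applying Dynkin/Gronwall together with \eqref{e2.9} for both $\bp$ and $-\bp$ (the paper does this via $\hat V=V_{-\bp}$ and the identity $V^{-\delta_0}=\hat V^{\delta_0}(1+\bc^\top\bx)^{-2\delta_0}$), feed these into Lemma~\ref{lm2.5} and the uniform negativity from Lemma~\ref{lm3.1}, Taylor-expand the log-Laplace transform, and finish by splitting according to $\dist(\bx,\partial\R^3_+)\gtrless\tilde\delta$. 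The only small imprecision is in the claim that $V_{\pm\bp}^{\delta_0}$ and $V^{\pm\delta_0}$ differ by $(1+\bc^\top\bx)^{\pm 2\delta_0}$: the factor is trivial for $V_{\bp}^{\delta_0}=V^{\delta_0}$ and equals $(1+\bc^\top\bx)^{-2\delta_0}$ for $V^{-\delta_0}$ vs.\ $V_{-\bp}^{\delta_0}$, but since you only need an upper bound on $\{\|\bx\|\leq M\}$ the argument goes through unchanged.
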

\begin{proof}
We have from It\^o's formula that
\begin{equation}\label{e:G}
\ln V(\BX(T))=\ln V(\BX(0)) + G(T)
\end{equation}
where
\begin{equation}\label{e:GG}
\begin{aligned}
G(T)=&\int_0^T\Phi(\BX(t))dt+p_0\int_0^T\dfrac{X_1(t)g_1(\BX(t))dE_1(t)+X_2(t)g_2(\BX(t))dE_2(t)}{X_1(t)+X_2(t)}\\
&+ \int_0^T\left[\dfrac{\sum_ic_iX_i(t)g_i(\BX(t))dE_i(t)}{1+\bc^\top\BX(t)}-\sum_ip_ig_i(\BX(t))dE_i(t)\right].
\end{aligned}
\end{equation}
In view of Dynkin's formula, equations \eqref{e:G}, \eqref{e2.9} and Gronwall's inequality
\begin{equation}\label{e3.4_2}
\E_\bx \exp(\delta_0 G(T))=\dfrac{\E_\bx V^{\delta_0}(\BX(T))}{V^{\delta_0}(\bx)}\leq \dfrac{V^{\delta_0}(\bx)+\E_\bx \int_0^t\Lom V^{\delta_0}(\BX(s))\,ds}{V^{\delta_0}(\bx)}\leq  \exp(\delta_0 HT).
\end{equation}
Let $\hat V(\bx):=V_{-\bp}=(1+\bc^\top\bx)(x_1+x_2)^{p_0}\prod_{i=1}^n x_i^{p_1}$.
By virtue of \eqref{e2.9}, we have
\begin{equation}\label{vhat-1}
\dfrac{\E_\bx \hat V^{\delta_0}(\BX(T))}{\hat V^{\delta_0}(\bx)}\leq  \exp(\delta_0 HT).
\end{equation}
Note that
\begin{equation}\label{vhat-2}
V^{-\delta_0}(\bx)=\hat V^{\delta_0}(\bx)(1+\bc^\top\bx)^{-2}\leq \hat V^{\delta_0}(\bx).
\end{equation}
It follows from \eqref{vhat-2} and \eqref{vhat-1} that
\begin{equation}\label{e3.5}
\begin{aligned}
\E_\bx \exp(-\delta_0 G(T))=&\dfrac{\E_\bx V^{-\delta_0}(\BX(T))}{V^{-\delta_0}(\bx)}\\
\leq&\dfrac{\E_\bx\hat V^{\delta_0}(\BX(T))}{V^{-\delta_0}(\bx)}\\
\leq& \dfrac{\E_\bx\hat V^{\delta_0}(\BX(T))}{\hat V^{\delta_0}(\bx)}(1+\bc^\top\bx)^{2}\\
\leq& (1+\bc^\top\bx)^{2}\exp(\delta_0 HT).
\end{aligned}
\end{equation}

By \eqref{e3.4_2} and \eqref{e3.5} the assumptions of Lemma \ref{lm2.5} hold for the random variable $G(T)$. Therefore,
there is $\tilde K_2\geq 0$ such that
\begin{equation}\label{e:phi''}
0\leq \dfrac{d^2\tilde\phi_{\bx,T}}{d\theta^2}(\theta)\leq \tilde K_2\,\text{ for all }\,\theta\in\left[0,\frac{\delta_0}2\right),\, \bx\in\R^{3,\circ}_+, \|\bx\|\leq M, T\in [T^*,n^*T^*]
\end{equation}
where
$$\tilde\phi_{\bx,T}(\theta)=\ln\E_\bx \exp(\theta G(T)).$$

An application of Lemma \ref{lm3.1}, and equation \eqref{e:GG} yields
\begin{equation}\label{e:phi'}
\dfrac{d\tilde\phi_{\bx,T}}{d\theta}(0)=\E_\bx G(T)\leq -\dfrac34\rho^*T
\end{equation}
for all $\bx\in\R^{3,\circ}_+$ satisfying $\|\bx\|\leq M$ and dist$(\bx,\partial\R^3_+)<\tilde\delta.$
By a Taylor expansion around $\theta=0$, for $\|\bx\|\leq M, \dist(\bx,\partial\R^n_+)<\tilde\delta, T\in [T^*,n^*T^*]$ and $\theta\in\left[0,\frac{\delta_0}2\right)$ and using \eqref{e:phi''}-\eqref{e:phi'} we have
$$\tilde\phi_{\bx,T}(\theta)=\tilde\phi_{\bx,T}(0)+\dfrac{d\tilde\phi_{\bx,T}}{d\theta}(0)\theta+ \frac{1}{2} \dfrac{d^2\tilde\phi_{\bx,T}}{d\theta^2}(\xi) (\theta-\xi)^2\leq -\dfrac34\rho^*T\theta+\theta^2\tilde K_2 .$$
If we choose any $\theta\in\left(0,\frac{\delta_0}2\right)$ satisfying
$\theta<\frac{\rho^*T^*}{4\tilde K_2}$, we obtain that
\begin{equation}\label{e3.10}
\tilde\phi_{\bx,T}(\theta)\leq -\dfrac12\rho^*T\theta\,\,\text{ for all }\,\bx\in\R^{3,\circ},\|\bx\|\leq M, \dist(\bx,\partial\R^n_+)<\tilde\delta, T\in [T^*,n^*T^*],
\end{equation}
which leads to
\begin{equation}\label{e3.11}
\dfrac{\E_\bx V^\theta(\BX(T))}{V^\theta(\bx)}=\exp \tilde\phi_{\bx,T}(\theta)\leq\exp(-0.5p^*T\theta).
\end{equation}
In view of \eqref{e2.9},
we have for $\bx$ satisfying $\|\bx\|\leq M, \dist(\bx,\partial\R^n_+)\geq\tilde\delta$ and $T\in  [T^*,n^*T^*]$ that
\begin{equation}\label{e3.12}
\E_\bx V^\theta(\BX(T))\leq \exp(\theta n^*T^*H)\sup\limits_{\|\bx\|\leq M, \dist(\bx,\partial\R^n_+)\geq\tilde\delta}\{V(\bx)\}=:K_\theta<\infty.
\end{equation}
The proof can be finished by combining \eqref{e3.11} and \eqref{e3.12}.
\end{proof}

\begin{proof}[Proof of Theorem \ref{thm2.1}]
Having equation \eqref{e2.8} and Proposition \ref{prop2.1} in hand,
one can mimic the proof of \cite[Theorem 4.1]{HN16}.
\end{proof}
%
%
%
%
%
%
\section{Proof of Theorem \ref{thm:rps}}\label{s:rock-paper-scissors}
Throughout this section we suppose we are in the rock-paper-scissors situation from Definition \ref{deff:rps}.
We note that in this case we cannot use the extinction result \ref{t:ex2} because Assumption \ref{a.extn4} does not hold. Similarly to Lemma \ref{lm3.2},
we can show that
\begin{equation}\label{e.sum}
\int_{\R^3_+}\left(\dfrac{\sum_{i=1}^3 x_if_i(\bx)}{x_1+x_2+x_3}-\dfrac{\sum_{i,j=1}^3\sigma_{ij}x_ix_jg_i(\bx)g_j(\bx)}{2(x_1+x_2+x_3)^2}\right)\pi(d\bx)=0, \text{ for } \pi\in\{\mu_1,\mu_2,\mu_{3}\}.
\end{equation}
\begin{lm}\label{l:ineq}
If $\lambda_2(\mu_1)\lambda_3(\mu_2)\lambda_1(\mu_3)+\lambda_3(\mu_1)\lambda_1(\mu_2)\lambda_2(\mu_3)>0$
then there exist
$p_1, p_2, p_3>0$ such that
\begin{equation}\label{c2e1}
\sum_{i=1}^3p_i\lambda_i(\pi)>0, \pi\in\{\mu_1,\mu_2,\mu_{3}\}.
\end{equation}
If $\lambda_2(\mu_1)\lambda_3(\mu_2)\lambda_1(\mu_3)+\lambda_3(\mu_1)\lambda_1(\mu_2)\lambda_2(\mu_3)<0$
then there exist
$p_1, p_2, p_3>0$ such that
\begin{equation}\label{c2e4}
\sum_{i=1}^3p_i\lambda_i(\pi)<0, \pi\in\{\mu_1,\mu_2,\mu_{3}\}.
\end{equation}
\end{lm}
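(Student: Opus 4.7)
The key idea is that by Proposition~\ref{p:zero} we have $\lambda_i(\mu_i)=0$, so the condition $\sum_{i=1}^3 p_i\lambda_i(\mu_j)>0$ reduces to a \emph{two-term} inequality in each of the three indices $j=1,2,3$. I will reduce the search for $p_1,p_2,p_3>0$ to a single multiplicative consistency condition and check that this condition matches exactly the hypothesis of the lemma.

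Consider case (a) of Definition~\ref{deff:rps}, in which $\lambda_3(\mu_1),\lambda_1(\mu_2),\lambda_2(\mu_3)>0$ and $\lambda_2(\mu_1),\lambda_3(\mu_2),\lambda_1(\mu_3)<0$. Then $\sum_i p_i\lambda_i(\mu_j)>0$ becomes the cyclic system
\begin{equation*}
p_3\lambda_3(\mu_1)>p_2|\lambda_2(\mu_1)|,\qquad p_1\lambda_1(\mu_2)>p_3|\lambda_3(\mu_2)|,\qquad p_2\lambda_2(\mu_3)>p_1|\lambda_1(\mu_3)|.
\end{equation*}
Multiplying these three strict inequalities and cancelling the common factor $p_1p_2p_3>0$ produces the necessary condition
\begin{equation*}
\lambda_3(\mu_1)\lambda_1(\mu_2)\lambda_2(\mu_3)\;>\;|\lambda_2(\mu_1)\lambda_3(\mu_2)\lambda_1(\mu_3)|\;=\;-\lambda_2(\mu_1)\lambda_3(\mu_2)\lambda_1(\mu_3),
\end{equation*}
which is precisely $\lambda_2(\mu_1)\lambda_3(\mu_2)\lambda_1(\mu_3)+\lambda_3(\mu_1)\lambda_1(\mu_2)\lambda_2(\mu_3)>0$. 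Conversely, when this inequality holds one constructs admissible weights explicitly: fix $p_2=1$, take $p_3$ just above $|\lambda_2(\mu_1)|/\lambda_3(\mu_1)$, take $p_1$ just above $p_3|\lambda_3(\mu_2)|/\lambda_1(\mu_2)$, and observe that the third inequality $p_2\lambda_2(\mu_3)>p_1|\lambda_1(\mu_3)|$ holds with room to spare because the product of the ratios is strictly less than one.

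For case (b) the signs swap, so the cyclic inequalities become $p_2\lambda_2(\mu_1)>p_3|\lambda_3(\mu_1)|$, $p_3\lambda_3(\mu_2)>p_1|\lambda_1(\mu_2)|$, $p_1\lambda_1(\mu_3)>p_2|\lambda_2(\mu_3)|$; multiplying yields the same condition $\lambda_2(\mu_1)\lambda_3(\mu_2)\lambda_1(\mu_3)+\lambda_3(\mu_1)\lambda_1(\mu_2)\lambda_2(\mu_3)>0$, confirming that both definitions cover the same hypothesis. The second part of the lemma (the strict $<0$ situation) follows by reversing each of the three inequalities throughout: the multiplicative argument now yields $\lambda_2(\mu_1)\lambda_3(\mu_2)\lambda_1(\mu_3)+\lambda_3(\mu_1)\lambda_1(\mu_2)\lambda_2(\mu_3)<0$ as the necessary and sufficient condition, and the explicit construction of $p_i>0$ is obtained by the analogous cyclic choice.

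There is no real obstacle here beyond careful bookkeeping of the signs in both cases (a) and (b); the lemma is essentially a linear-algebraic lemma about when a homogeneous system of three strict linear inequalities (two of which have one positive and one negative coefficient) admits a strictly positive solution, and the product-of-cyclic-ratios identity is the only ingredient needed.
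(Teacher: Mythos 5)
Your proof is correct, and it is worth noting that the paper states Lemma~\ref{l:ineq} without supplying a proof, so your argument fills a genuine gap rather than retreading an existing one. The route you take is the natural one: Proposition~\ref{p:zero} kills the diagonal terms $\lambda_j(\mu_j)$, each constraint collapses to a two-term cyclic inequality, and a strictly positive $(p_1,p_2,p_3)$ exists precisely when the product of the three ratios around the cycle is below (respectively, above) $1$, which is equivalent to $\lambda_2(\mu_1)\lambda_3(\mu_2)\lambda_1(\mu_3)+\lambda_3(\mu_1)\lambda_1(\mu_2)\lambda_2(\mu_3)\gtrless 0$ once signs are tracked.

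Two points could be tightened. First, the sufficiency step (``take $p_3$ just above \dots, observe that the third inequality holds with room to spare'') would be cleaner with an explicit choice: with $a_1 = |\lambda_2(\mu_1)|/\lambda_3(\mu_1)$, $a_2 = |\lambda_3(\mu_2)|/\lambda_1(\mu_2)$, $a_3 = |\lambda_1(\mu_3)|/\lambda_2(\mu_3)$ and $a_1a_2a_3<1$, set $r_i := a_i\,(a_1a_2a_3)^{-1/3}$ so that $r_i>a_i$ and $r_1r_2r_3=1$; the $p_i$'s are then recovered from the ratios $p_3/p_2=r_1$, $p_1/p_3=r_2$, $p_2/p_1=r_3$. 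This makes the existence claim uniform and avoids the inductive ``just above'' hedging. (The analogous $r_i := a_i\,(a_1a_2a_3)^{-1/3} < a_i$ handles the reverse case.) Second, while your check of type (b) in Definition~\ref{deff:rps} is a nice verification that the scalar criterion is symmetric in the two RPS orientations, note that Theorem~\ref{thm:rps} and Section~\ref{s:rock-paper-scissors} are stated only for type (a), so that part of your argument is a correct bonus rather than a logical requirement for the lemma's intended use.
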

\subsection{Case 1: $\lambda_2(\mu_1)\lambda_3(\mu_2)\lambda_1(\mu_3)+\lambda_3(\mu_1)\lambda_1(\mu_2)\lambda_2(\mu_3)>0$.}

\begin{thm}\label{thm3.1}
Suppose that Assumption \ref{a.nonde} holds and
\begin{equation}\label{e:per}
\lambda_2(\mu_1)\lambda_3(\mu_2)\lambda_1(\mu_3)+\lambda_3(\mu_1)\lambda_1(\mu_2)\lambda_2(\mu_3)>0.
\end{equation}
Then $\BX(t)$ is strongly stochastically persistent.
\end{thm}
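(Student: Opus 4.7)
The plan is to reduce the theorem to a direct application of Theorem \ref{t:pers2}, by verifying Assumption \ref{a.coexn} (equivalently, Assumption \ref{a:hof}) for every ergodic invariant probability measure supported on $\partial\R^3_+$. The key input is Lemma \ref{l:ineq}, which translates the algebraic inequality \eqref{e:per} into the existence of positive weights $p_1,p_2,p_3>0$ satisfying $\sum_i p_i\lambda_i(\pi)>0$ for $\pi\in\{\mu_1,\mu_2,\mu_3\}$; combined with a bookkeeping step identifying all of $\M$, this should give Assumption \ref{a:hof} for the full boundary.

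The first step is to enumerate $\M$. Since we are in the rock-paper-scissors setting of Definition \ref{deff:rps} (say case (a)), we have $\lambda_i(\bdelta^*)>0$ for $i=1,2,3$, so by Proposition \ref{prop:1d} there exist unique ergodic measures $\mu_1,\mu_2,\mu_3$ on the coordinate axes. On each two-dimensional face, exactly one of the relevant cross-exponents $\lambda_j(\mu_i)$ is positive and the other is negative, e.g.\ $\lambda_1(\mu_2)>0>\lambda_2(\mu_1)$ on the $x_1x_2$-face. Proposition \ref{prop:2d} (case (2), see Remark \ref{rmk:2d}) then rules out any ergodic measure supported on the interior of any two-dimensional face. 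Therefore
\begin{equation*}
\M=\{\bdelta^*,\mu_1,\mu_2,\mu_3\}.
\end{equation*}

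The second step is to produce the Hofbauer weights. By Lemma \ref{l:ineq}, condition \eqref{e:per} yields $p_1,p_2,p_3>0$ with
\begin{equation*}
\sum_{i=1}^3 p_i\lambda_i(\mu_j)>0\quad\text{for }j=1,2,3.
\end{equation*}
Because all three $\lambda_i(\bdelta^*)$ are positive and the $p_i$ are positive, the same weights satisfy $\sum_i p_i\lambda_i(\bdelta^*)>0$. Combined with the enumeration of $\M$ above, this verifies Assumption \ref{a:hof} for every ergodic boundary measure, which by the equivalence noted in the paper (and in \cite{SBA11,HN16,BS18}) is Assumption \ref{a.coexn}: $\max_i\lambda_i(\mu)>0$ for every $\mu\in\Conv(\M)$.

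The final step is to invoke Theorem \ref{t:pers2}, which under Assumptions \ref{a.nonde} and \ref{a.coexn} concludes that $\BX$ is strongly stochastically persistent, converges exponentially fast in total variation to a unique invariant probability measure $\pi^*$ on $\R_+^{3,\circ}$, and in particular delivers \eqref{e:tv}. There is no real obstacle here since all the hard analytic work is packaged inside Theorem \ref{t:pers2}; the only subtlety is making sure that the ``small face'' ergodic measures do not exist, which is exactly what the rock-paper-scissors invasion pattern guarantees via Proposition \ref{prop:2d}. In an analogous case (b) the roles of the $\mu_i$ cycle in the opposite direction and the identical argument applies after relabeling.
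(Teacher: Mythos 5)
Your proposal is correct and follows essentially the same route as the paper: invoke Lemma \ref{l:ineq} to obtain positive Hofbauer weights $p_1,p_2,p_3$, observe that $\lambda_i(\bdelta^*)>0$ makes the same weights work at the origin, verify Assumption \ref{a.coexn}, and conclude via Theorem \ref{t:pers2}. The only difference is that you explicitly enumerate $\M=\{\bdelta^*,\mu_1,\mu_2,\mu_3\}$ via Propositions \ref{prop:1d}--\ref{prop:2d} (a step the paper leaves implicit), which is a welcome clarification but not a different argument.
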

\begin{proof}
	Note that $\lambda_i(\bdelta^*)>0, i=1,2,3$. Combining this property with \eqref{c2e1} implies that
\[
\sum_i p_i \lambda_i(\mu)>0  \text{ for all } \mu \in \Conv(\M).
\]
This shows that Assumption \ref{a.coexn} holds: $\max_{i=1,2,3}\{\lambda_i(\mu)\}>0$ for any $\mu \in \Conv(\M)$. The proof is completed by using Theorem \ref{t:pers2}.
\end{proof}

\subsection{Case 2: $\lambda_2(\mu_1)\lambda_3(\mu_2)\lambda_1(\mu_3)+\lambda_3(\mu_1)\lambda_1(\mu_2)\lambda_2(\mu_3)<0$.}

By Lemma \ref{l:ineq} we can find $p_0,p_1,p_2,p_3>0$ such that $|p_0|+|p_1|+|p_2|+|p_3|<\frac{\delta_0}4$
and
\begin{equation}\label{e.p2}
2\rho^*:=\min\left\{p_0\min_{i\in\{1,2,3\}}\{\lambda_i(\bdelta^*)\}+\sum_{i=1}^3p_i\lambda_i(\bdelta^*); -\sum_{i=1}^3p_i\lambda_i(\pi), \pi\in\{\mu_1,\mu_2,\mu_{3}\}\right\}>0.
\end{equation}
Using the $H$ from \eqref{e:H} define $n_e\in\N$ such that
\begin{equation}\label{e:n_e}
\gamma_b(n_e-1)>H.
\end{equation}
\begin{prop}\label{prop4.1}
Let $U:\R^3_+\setminus\{\0\}$ be defined by
$$U(\bx)=\dfrac{(1+\bc^\top\bx)x_1^{p_1}x_2^{p_2}x_3^{p_3}}{(x_1+x_2+x_3)^{p_0}}$$
 with $\bp$ and $\rho^*$ satisfying \eqref{e.p2}.
There exist constants $T^e>0$, $\theta\in\left(0,\dfrac{\delta_0}2\right)$, $\hat\delta_e>0$, such that for any $T\in[T^e,n^eT^e]$ and $\bx\in\R^{3,\circ}_+, \|\bx\|\leq M$, $\dist(\bx,\partial\R^3_+)<\hat\delta_e$,
$$\E_\bx U^\theta(\BX(T))\leq \exp(-0.5\theta \rho^*T^e) U^\theta(\bx).$$
\end{prop}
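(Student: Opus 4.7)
The proof follows the same architecture as Proposition~\ref{prop2.1}, but with $U$ playing the role of a boundary-attracting Lyapunov function that vanishes on each face of $\partial\R_+^3$ rather than blowing up there. The plan is to first apply It\^o's formula to $\ln U(\BX(t))$ to obtain $\ln U(\BX(T)) = \ln U(\BX(0)) + G(T)$, where $G(T) = \int_0^T \Phi(\BX(t))\,dt + M(T)$, $M(T)$ is a diffusion local martingale, and the drift is
$$\Phi(\bx) = \dfrac{\sum c_i x_i f_i(\bx)}{1+\bc^\top\bx} - \dfrac12\dfrac{\sum \sigma_{ij}c_ic_jx_ix_jg_i(\bx)g_j(\bx)}{(1+\bc^\top\bx)^2} + \sum_{i=1}^3 p_i\Big(f_i(\bx) - \dfrac{\sigma_{ii}g_i^2(\bx)}{2}\Big) - p_0\,\Psi(\bx),$$
with $\Psi(\bx) = \dfrac{\sum x_i f_i(\bx)}{x_1+x_2+x_3} - \dfrac{\sum \sigma_{ij}x_ix_jg_ig_j}{2(x_1+x_2+x_3)^2}$. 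The sign conventions in $\Phi$ are reversed relative to Proposition~\ref{prop2.1} because $U$ carries $x_i^{p_i}$ in the numerator, not the denominator.

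The heart of the argument is a boundary-drift lemma analogous to Lemma~\ref{lm3.1}: one must show there exist $T^e>0$ and $\hat\delta_e>0$ such that $T^{-1}\int_0^T \E_\bx \Phi(\BX(t))\,dt \leq -\tfrac34\rho^*$ for all $T\in[T^e,n^eT^e]$ and all $\bx\in\R_+^{3,\circ}$ with $\|\bx\|\leq M$ and $\dist(\bx,\partial\R_+^3)<\hat\delta_e$. This is proved by contradiction and tightness: along any sequence $\bx_k \to \partial\R_+^3$, $T_k\to\infty$ for which the estimate fails, the occupation measures $\Pi_{T_k}^{\bx_k}$ have a weak-$*$ limit that is an invariant probability measure $\mu$ supported on $\partial\R_+^3$, hence by ergodic decomposition a convex combination of $\bdelta^*,\mu_1,\mu_2,\mu_3$. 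Lemma~\ref{lm3.2} kills the first two terms of $\Phi$ against each $\mu_i$; identity~\eqref{e.sum} kills the $-p_0\Psi$ term against each $\mu_i$; and \eqref{e.p2} gives $\int\Phi\,d\mu_i = \sum_j p_j\lambda_j(\mu_i)\leq -2\rho^*$. The function $\Psi$ is undefined at the origin, but can be bounded below by $\min_i(f_i-\sigma_{ii}g_i^2/2)$ (the three-species analog of the decomposition in~\eqref{e2.4}, obtained from positive definiteness of $(\sigma_{ij}g_ig_j)_{i,j=1}^3$ on the simplex $\{\sum y_i=1\}$), producing an upper semi-continuous extension $\widetilde\Phi$ for which Lemma~\ref{lm3.3} and the $p_0\min_i\lambda_i(\bdelta^*)$ piece of~\eqref{e.p2} yield $\int\widetilde\Phi\,d\bdelta^*\leq -2\rho^*$, the contradiction. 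The Feller property and continuity of $\Phi$ off the origin then promote the estimate from $\partial\R_+^3$ to a $\hat\delta_e$-neighborhood exactly as in Lemma~\ref{lm3.1}.

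The remainder of the proof is a direct transcription of Proposition~\ref{prop2.1}. Generator bounds as in~\eqref{e2.9}, applied both to $U^{\delta_0}$ and to a complementary weight $\hat U$ obtained by flipping the signs of $(p_0,\dots,p_3)$, give $\E_\bx\exp(\pm\delta_0 G(T))\leq C(1+\bc^\top\bx)^2\exp(\delta_0 HT)$ uniformly for $T\in[T^e,n^eT^e]$. Lemma~\ref{lm2.5} then produces a uniform bound $\widetilde K_2$ on the second derivative of $\tilde\phi_{\bx,T}(\theta):=\ln\E_\bx\exp(\theta G(T))$ on $[0,\delta_0/2)$, while the boundary-drift lemma gives $\tilde\phi_{\bx,T}'(0)=\E_\bx G(T)\leq -\tfrac34\rho^*T$. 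Taylor expansion at $\theta=0$ and choosing $\theta\in(0,\delta_0/2)$ with $\theta<\rho^*T^e/(4\widetilde K_2)$ yield $\tilde\phi_{\bx,T}(\theta)\leq -\tfrac12\rho^*T\theta$, which is exactly the claimed bound $\E_\bx U^\theta(\BX(T))\leq \exp(-\tfrac12\theta\rho^*T^e)U^\theta(\bx)$. The main obstacle is the boundary-drift lemma, specifically constructing the upper semi-continuous extension of $\Phi$ to the origin that respects both identity~\eqref{e.sum} along each axial measure and the $p_0\min_i\lambda_i(\bdelta^*)$ condition at $\bdelta^*$ simultaneously; everything else is a mechanical adaptation of Section~\ref{s:predprey}.
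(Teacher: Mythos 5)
Your proposal is essentially a correct unpacking of the paper's terse one-line proof, which indeed says ``proved in the same manner as Proposition \ref{prop2.1} in view of \eqref{e.sum} and \eqref{e.p2}.'' You have the right architecture throughout: the It\^o decomposition of $\ln U$, the boundary-drift lemma by contradiction and tightness, the three-variable analog of \eqref{e2.4} (which does hold: $\Psi(\bx)-\sum_i y_i\lambda_i = \tfrac14\sum_{i,j}y_iy_j\bigl(\sigma_{ii}g_i^2+\sigma_{jj}g_j^2-2\sigma_{ij}g_ig_j\bigr)\ge 0$ by positive definiteness, with $y_i = x_i/\sum_j x_j$), the complementary weight $\hat U$ with flipped signs, the log-Laplace/Taylor argument via Lemma~\ref{lm2.5}.

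One sign subtlety deserves attention, and it is exactly at the spot you flag as the ``main obstacle.'' Because $\ln U$ carries $+p_i\ln x_i$ rather than $-p_i\ln x_i$, the drift near the origin is $\sum_i p_i\lambda_i(\bdelta^*)-p_0\Psi(\bx)$ with $\limsup_{\bx\to\0}\bigl(-p_0\Psi(\bx)\bigr)=-p_0\min_i\lambda_i(\bdelta^*)$. Thus the upper semi-continuous extension takes the value $\widetilde\Phi(\0)=\sum_i p_i\lambda_i(\bdelta^*)-p_0\min_i\lambda_i(\bdelta^*)$ at the origin, and to have $\int\widetilde\Phi\,d\bdelta^*\le -2\rho^*$ you need
$$p_0\min_i\lambda_i(\bdelta^*)\;-\;\sum_{i=1}^3 p_i\lambda_i(\bdelta^*)\;\ge\;2\rho^*,$$
whereas \eqref{e.p2} as printed reads $p_0\min_i\lambda_i(\bdelta^*)\;+\;\sum_i p_i\lambda_i(\bdelta^*)\ge 2\rho^*$, which is trivially true here (all $\lambda_i(\bdelta^*)>0$, all $p_i>0$) and does not give the needed bound. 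The sign in \eqref{e.p2} should be a minus; the requirement is still achievable by taking $p_0$ large relative to $p_1,p_2,p_3$ and then rescaling, but your phrase ``the $p_0\min_i\lambda_i(\bdelta^*)$ piece of \eqref{e.p2} yields $\int\widetilde\Phi\,d\bdelta^*\le -2\rho^*$'' takes \eqref{e.p2} at face value, which doesn't close the gap. With that sign corrected, the rest of your proof goes through as written.
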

\begin{proof}
In view of \eqref{e.sum} and \eqref{e.p2}, this Proposition is proved in the same manner as Proposition \ref{prop2.1}.
\end{proof}

\begin{thm}\label{thm4.1}
Suppose Assumption \ref{a.nonde} holds and
\[
\lambda_2(\mu_1)\lambda_3(\mu_2)\lambda_1(\mu_3)+\lambda_3(\mu_1)\lambda_1(\mu_2)\lambda_2(\mu_3)<0.
\]
For any $\delta<\delta_0$
and any $\bx\in\R^{3,\circ}_+$ we have
\begin{equation}\label{e.extinction}
\lim_{t\to\infty}\E_\bx \bigwedge_{i=1}^3 X_i^{\delta}(t)=0,
\end{equation}
where $\bigwedge_{i=1}^3a_i:=\min_{i=1,\dots,3}\{a_i\}.$
\end{thm}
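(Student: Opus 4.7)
The plan is to use $U^\theta$ from Proposition \ref{prop4.1} as a global Lyapunov function, promote its local contraction into a decay of $\E_\bx U^\theta(\BX(t))$, and then compare $U^\theta$ to $\bigwedge_{i=1}^3 X_i^\delta$ on compact sets to conclude.

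First, I would patch Proposition \ref{prop4.1} into a global Foster--Lyapunov inequality over $\R_+^{3,\circ}$. The near-boundary bounded region $\{\|\bx\|\leq M,\ \dist(\bx,\partial\R^3_+)<\hat\delta_e\}$ is handled by Proposition \ref{prop4.1}. On the intermediate bounded region $\{\|\bx\|\leq M,\ \dist(\bx,\partial\R^3_+)\geq\hat\delta_e\}$, $U^\theta$ is uniformly bounded and the estimate \eqref{e2.9} controls $\E_\bx U^\theta(\BX(T))$ by a constant. For $\|\bx\|>M$, the tightness bound \eqref{e2.8}, applied to an auxiliary $V_{\hat\bp}^{\delta_0}$ chosen so that $U^\theta\leq c\,V_{\hat\bp}^{\delta_0}$, together with Gronwall, gives exponential decay of $\E_\bx U^\theta(\BX(T))$. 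Patching these three regimes produces
\[
\E_\bx U^\theta(\BX(n^eT^e))\leq \kappa\,U^\theta(\bx)+K,\qquad \bx\in\R_+^{3,\circ},
\]
for some $\kappa\in(0,1)$ and $K\geq 0$; iterating yields $\sup_{t\geq 0}\E_\bx U^\theta(\BX(t))<\infty$.

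Second, I would upgrade boundedness to decay via a weak-limit argument. Extract any weak-$*$ cluster point $\nu$ of the mean occupation measures $\Pi^\bx_{t_n}(d\by)=t_n^{-1}\int_0^{t_n}\PP_\bx\{\BX(s)\in d\by\}\,ds$ along $t_n\to\infty$; by Lemma \ref{lm3.3} and the Feller property $\nu$ is an invariant probability of $\BX$. Under Definition \ref{deff:rps}(a), the sign structure on $\lambda_i(\mu_j)$ rules out coexistence on any two-dimensional face, so the only boundary ergodic measures are $\bdelta^*,\mu_1,\mu_2,\mu_3$; each of these assigns zero mass to $\{U^\theta>0\}$. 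To conclude $\int U^\theta\,d\nu=0$ one must rule out an interior ergodic component $\mu^*$ of $\nu$ with $I_{\mu^*}=\{1,2,3\}$: Proposition \ref{p:zero} yields $\lambda_i(\mu^*)=0$, Lemma \ref{l:ineq} supplies $p_i>0$ with $\sum p_i\lambda_i<0$ on the three axial measures, and combining these via a convex-combination/occupation-measure computation in the spirit of Lemma \ref{lm3.1} gives a strict-sign contradiction. Hence $\int U^\theta\,d\nu=0$ for every such $\nu$, and combined with uniform integrability (from Step 1 plus a truncation using Assumption \ref{a.extn2}) and Feller continuity of $t\mapsto\E_\bx U^\theta(\BX(t))$, one obtains $\lim_t\E_\bx U^\theta(\BX(t))=0$.

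Finally, on $\{\|\bx\|\leq M'\}\cap\R_+^{3,\circ}$ the inequality $\prod_i x_i^{p_i}\geq(\min_i x_i)^{\sum p_i}$ together with uniform bounds on $(1+\bc^\top\bx)$ and $(x_1+x_2+x_3)^{-p_0}$ gives the pointwise estimate $\bigwedge_i x_i^\delta\leq C_{M'}\,U(\bx)^{\delta/\sum p_i}$. Splitting $\E_\bx\bigwedge X_i^\delta(t)$ according to $\{\|\BX(t)\|\leq M'\}$ versus its complement, bounding the large-norm contribution via the moment bound from Assumption \ref{a.nonde} (applicable because $\delta<\delta_0$), applying Jensen's inequality (after possibly shrinking $\theta$ so that $\delta/\sum p_i\leq 1$), and invoking the previous step yields $\lim_t\E_\bx\bigwedge X_i^\delta(t)=0$. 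The main obstacle is Step 2: the Foster--Lyapunov inequality produces a nonzero residual constant $K$, so boundedness does not immediately imply decay, and ruling out interior ergodic invariant measures from the sign hypothesis is delicate because Lemma \ref{l:ineq} constrains $\sum p_i\lambda_i$ only on the three boundary measures while Proposition \ref{p:zero} makes $\lambda_i\equiv 0$ on any interior ergodic support; the contradiction must be extracted through a careful weighted occupation-measure argument.
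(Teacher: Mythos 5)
Your overall strategy — use $U^\theta$ from Proposition~\ref{prop4.1} as a near-boundary Lyapunov function and transfer its contraction to the expectation of $\bigwedge_i X_i^\delta$ — is in the right spirit, but it diverges from the paper in a way that leaves a genuine gap.

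The gap is in Step~2. Your argument there produces at best a Ces\`{a}ro statement: if the weak$^*$ limits of the \emph{occupation} measures $\Pi^\bx_{t_n}$ charge only the boundary, you can deduce (via Lemma~\ref{lm3.3}) that $\frac{1}{t_n}\int_0^{t_n}\E_\bx U^\theta(\BX(s))\,ds\to 0$ along subsequences. But the theorem asserts the pointwise limit $\lim_{t\to\infty}\E_\bx\bigwedge_i X_i^\delta(t)=0$, and the Foster--Lyapunov inequality you establish in Step~1 has a nonzero residual $K$, so $\E_\bx U^\theta(\BX(t))$ is not a supermartingale and need not converge; invoking ``uniform integrability and Feller continuity'' does not repair the jump from Ces\`{a}ro to pointwise decay, since $\E_\bx U^\theta(\BX(t))$ is permitted to oscillate with vanishing time-average. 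Your proposed mechanism for ruling out an interior ergodic measure is also unpersuasive: for an interior ergodic $\mu^*$, Proposition~\ref{p:zero} gives $\lambda_i(\mu^*)=0$ for all $i$, hence $\sum p_i\lambda_i(\mu^*)=0$, and the signs $\sum p_i\lambda_i(\mu_j)<0$ on the three axial measures do not contradict this; the ``strict-sign contradiction'' you invoke is not exhibited.

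The paper avoids both problems by a different route. First, it truncates the Lyapunov function at a fixed level $\varsigma$ (defined so that $U^\theta(\bx)<\varsigma$ forces $\dist(\bx,\partial\R^3_+)<\hat\delta_e$) to obtain $\tilde U^\theta=\varsigma\wedge U^\theta$, which is \emph{bounded}, and then patches the contraction near the boundary (Proposition~\ref{prop4.1}), the generator bound $\Lom U^\theta\le-\theta\gamma_b U^\theta$ for $\|\bx\|>M$, and the crude bound \eqref{e2.9} across stopping times $\tau$ and $\xi$ to show that $Y(k)=\tilde U^\theta(\BX(kn_eT_e))$ is a genuine nonnegative bounded \emph{supermartingale}. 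The supermartingale maximal inequality then gives $\PP_\bx\{\tilde\xi_\lambda<\infty\}\le\eps$ when $U^\theta(\bx)\le\lambda\eps$: starting near the boundary, the process stays near the boundary with high probability. Second, the paper uses this to show by contradiction that $\BX$ is \emph{transient} in $\R^{3,\circ}_+$, which rules out every interior invariant measure without any case analysis on $\lambda_i$. Third — and this is the step your proposal misses — the paper passes to weak$^*$ limits of the \emph{transition kernels} $P(t_k,\bx_0,\cdot)$, not the occupation measures: tightness plus transience forces any such limit $\pi$ to be an invariant probability supported on $\partial\R^3_+$, and because $|\bigwedge_i x_i|^\delta\le K(1+\|\bx\|)^\delta$ with $\delta<\delta_0$, the extended Portmanteau argument then yields the pointwise decay $\E_\bx\bigwedge_iX_i^\delta(t)\to 0$. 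In short: the truncation at level $\varsigma$ is the crucial device that turns the local near-boundary contraction into a global supermartingale, which is what buys transience and hence the pointwise (rather than Ces\`{a}ro) conclusion.
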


Before providing a proof of Theorem~\ref{thm4.1}, we provide a sketch of the main ideas.  We wish to prove that the solution starting close enough to the boundary (except the origin) will stay close to the boundary with a large probability under the ``attracting" condition: $$\lambda_2(\mu_1)\lambda_3(\mu_2)\lambda_1(\mu_3)+\lambda_3(\mu_1)\lambda_1(\mu_2)\lambda_2(\mu_3)<0,$$ which says that the absolute value of the product of the negative (attracting) Lyapunov exponents dominates the product of the positive (repelling) Lyapunov exponents.

From Proposition \ref{prop4.1} we get that
$\E_\bx U^\theta(\BX(T))\leq \exp(-0.5\theta \rho^*T^e) U^\theta(\bx)$
when $\|\bx\|$ is not large and $\bx$ is close to the boundary.
When $\|\bx\|$ is large, we have from \eqref{a.tight} that $\Lom U^\theta(\bx)\leq -\theta\gamma_bU^\theta(\bx)$.

The next step is obtaining a Lyapunov-type inequality which will be used for estimating exit times. We show that there $\rho<1$ such that
$$\E_\bx U^\theta(\BX(n_eT_e))\leq \rho U^\theta(\bx),$$ when $U^\theta(\bx)$ is small.

To accomplish this we combine the estimates we have when $\|\bx\|$ is large $(>M)$ and not large $(\leq M)$.
This can be done by analyzing the time  the process $\BX$ hits $\{|\bx|\leq M\}$ (denoted by $\tau$) and the time  $U^\theta(\BX(t))$ exceeds a certain value (denoted by $\xi$).
A few cases are considered and estimated by comparing these stopping times with $(n_e-1)T_e$ where $n_e$ is chosen sufficiently large so that no matter whether $\xi,\tau$ occur before $(n_e-1)T_e$ or after $(n_e-1)T_e$, the process stays for a sufficiently long uninterrupted period of time in either one of the two sets $\{\|\bx\|>M\}$ and $\{\|\bx\|\leq M, \dist(\bx,\partial\R^3_+)\leq \hat\delta\}$ for some small $\hat\delta$.
Then, we can show
$\E_\bx U^\theta(\BX(n_eT_e))\leq \rho U^\theta(\bx)$, $\rho<1$ when $U^\theta(\bx)$ is small.

Once we get this Lyapunov-type inequality, standard arguments from  martingale theory can be used to show that the Markov chain $\{X(kn_eT_e)\}_{k\in\N}$ will stay close to the boundary with a large probability if the initial condition is sufficiently close to the boundary.
This implies that the process $\{\BX(t)\}$ has no invariant measure in the interior of the state space, $\R_+^{3,\circ}$. As a result any weak-limit point of the occupation measures has to be supported on the boundary $\partial \R_+^3$ and \eqref{e.extinction} follows.

\begin{proof}
Similar computations to those showing \eqref{e2.8} yield
\begin{equation}\label{et3.1}
\Lom U^\theta(\bx)\leq -\theta\gamma_bU^\theta(\bx) \text{ if } \|x\|\geq M.
\end{equation}
If we define
$$C_U:=\sup_{\bx\in\R^{3,\circ}_+}\dfrac{(x_1+x_2+x_3)^{p_0}}{1+\bc^\top\bx}$$
then
$$\dist(\bx,\partial\R^3_+)^{p_1+p_2+p_3}=\min\{x_1,x_2,x_3\}^{p_1+p_2+p_3}\leq x_1^{p_1}x_2^{p_2}x_3^{p_3}\leq C_U U(\bx).$$
Let $$\varsigma:=\dfrac{\hat\delta_e^{ (p_1+p_2+p_3)\theta}}{C_U^\theta},$$
and
$$\xi:=\inf\left\{t\geq0: U^\theta(\BX(t))\geq \varsigma\right\}.$$
Clearly, if $U^\theta(\bx)<\varsigma$, then $\xi>0$
for
\begin{equation}\label{e:ine}
\dist(\bx,\partial\R^3_+)\leq \hat\delta\,, t\in [0,\xi).
\end{equation}
If we define $$\tilde U^\theta(\bx):=\varsigma\wedge U^\theta(\bx)$$
we have from the concavity of $x\mapsto x\wedge \varsigma$ that
$$\E_\bx \tilde U^\theta(\BX(T))\leq\varsigma\wedge \E U^\theta(\BX(T)).$$
The stopping time \begin{equation}\label{e:tau}
\tau:=\inf\{t\geq0: \|\BX(t)\|\leq M\}
\end{equation}
combined with \eqref{et3.1} and Dynkin's formula imply that
$$
\begin{aligned}
\E_\bx&\left[ \exp\left(\theta\gamma_b(\tau\wedge\xi\wedge n_eT_e)\right)U^\theta(\BX(\theta\gamma_b(\tau\wedge\xi\wedge n_eT_e))\right]\\
&\leq U^\theta(\bx) +\E_\bx \int_0^{\theta\gamma_b(\tau\wedge\xi\wedge n_eT_e)}\exp(\theta\gamma_b s)[\Lom U^\theta(\BX(s))+ \theta\gamma_bU^\theta(\BX(s))]ds\\
&\leq U^\theta(\bx).
\end{aligned}
$$
As a result,
\begin{equation}\label{et3.3}
\begin{aligned}
U^\theta(x)\geq&
\E_\bx\left[ \exp\left(\theta\gamma_b(\tau\wedge\xi\wedge n_eT_e)\right)U^\theta(\BX(\tau\wedge\xi\wedge n_eT_e))\right]\\
\geq& \E_\bx \left[\1_{\{\tau\wedge\xi\wedge(n_e-1)T_e=\tau\}}U^\theta(\BX(\tau))\right]\\
&+ \E_\bx \left[\1_{\{\tau\wedge\xi\wedge(n_e-1)T_e=\xi\}}U^\theta(\BX(\xi))\right]\\
 &+\exp\left(\theta\gamma_b (n_e-1)T_e\right) \E_\bx \left[\1_{\{(n_e-1)<\tau\wedge\xi<n_eT\}}U^\theta(\BX(\tau\wedge\xi))\right]\\
&+\exp\left(\theta\gamma_b n_eT_e\right) \E_\bx \left[\1_{\{\tau\wedge\xi\geq n_eT_e\}}U^\theta(\BX(n_eT_e))\right].
 \end{aligned}
\end{equation}

By the strong Markov property of $\BX$ and
Proposition \ref{prop4.1} (which we can use because of \eqref{e:ine}), we obtain
\begin{equation}\label{et3.4}
\begin{aligned}
\E_\bx&\left[ \1_{\{\tau\wedge\xi\wedge(n_e-1)T_e=\tau\}}U^\theta(\BX(n_eT_e))\right]\\
&\leq
 \E_\bx \left[\1_{\{\tau\wedge\xi\wedge(n_e-1)T_e=\tau\}}\exp\left(-0.5\theta p_e(n_eT_e-\tau)\right)U^\theta(\BX(\tau\wedge\xi))\right]\\
 &\leq 
 \E_\bx\left[\1_{\{\tau\wedge\xi\wedge(n_e-1)T_e=\tau\}}U^\theta(\BX(\tau\wedge\xi))\right].
 \end{aligned}
\end{equation}
By the strong Markov property of $\BX$ and
Lemma \ref{lm3.3}, we obtain
\begin{equation}\label{et3.5}
\begin{aligned}
\E_\bx&\left[ \1_{\{(n_e-1)T_e<\tau\wedge\xi<n_eT_e\}}U^\theta(\BX(n_eT_e))\right]\\
&\leq
 \E_\bx \left[\1_{\{(n_e-1)T_e<\tau\wedge\xi<n_eT_e\}}\exp\left(\theta H(n_eT_e-\tau)\right)U^\theta(\BX(\tau))\right]\\
 &\leq \exp(\theta HT_e)\E_\bx\left[\1_{\{(n_e-1)T_e<\tau\wedge\xi<n_eT_e\}}U^\theta(\BX(\tau))\right].
 \end{aligned}
\end{equation}
Since we always have $\tilde U^\theta(\BX(n_eT_e))\leq U^\theta(\BX(n_eT_e\wedge\xi))$, we get
\begin{equation}\label{et3.6}
\E_\bx \left[\1_{\{\tau\wedge\xi\wedge(n_e-1)T_e=\xi\}}\tilde U^\theta(\BX(n_eT_e))\right]\leq \E_\bx \left[\1_{\{\tau\wedge\xi\wedge(n_e-1)T_e=\xi\}}U^\theta(\BX(\xi))\right].
\end{equation}
If $U^\theta(\bx)<\varsigma$ then by applying \eqref{et3.4}, \eqref{et3.5} and \eqref{et3.6} to \eqref{et3.3} yields
\begin{equation}\label{et3.8}
\begin{aligned}
\tilde U^\theta(\bx)=U^\theta(\bx)
\geq& \E_\bx \left[\1_{\{\tau\wedge\xi\wedge(n_e-1)T_e=\tau\}}U^\theta(\BX(\tau))\right]\\
&+ \E_\bx \left[\1_{\{\tau\wedge\xi\wedge(n_e-1)T_e=\xi\}}U^\theta(\BX(\xi))\right]\\
 &+\exp\left(\theta\gamma_b (n_e-1)T_e\right) \E_\bx \left[\1_{\{(n_e-1)<\tau\wedge\xi<n_eT\}}U^\theta(\BX(\tau\wedge\xi))\right]\\
&+\exp\left(\theta\gamma_b n_eT_e\right) \E_\bx \left[\1_{\{\tau\wedge\xi\geq n_eT_e\}}U^\theta(\BX(n_eT_e))\right]\\
\geq&\E_\bx \left[\1_{\{\tau\wedge\xi\wedge(n_e-1)T_e=\tau\}}U^\theta(\BX(n_eT_e))\right]\\
&+ \E_\bx \left[\1_{\{\tau\wedge\xi\wedge(n_e-1)T_e=\xi\}}\tilde U^\theta(\BX(n_eT_e))\right]\\
 &+\exp\left(\theta\gamma_b (n_e-1)T_e-\theta HT_e\right)\E_\bx \left[\1_{\{(n_e-1)<\tau\wedge\xi<n_eT\}}U^\theta(\BX(n_eT_e))\right]\\
&+\exp\left(\theta\gamma_b n_eT_e\right) \E_\bx \left[\1_{\{\tau\wedge\xi\geq n_eT_e\}}U^\theta(\BX(n_eT_e))\right]\\
\geq& \E_\bx \tilde U^\theta(\BX(n_eT_e)) \,\quad\text{ (since } \tilde U^\theta(\cdot)\leq U^\theta(\cdot)).
 \end{aligned}
\end{equation}
Clearly, if $U^\theta(\bx)\geq\varsigma$ then
\begin{equation}\label{et3.8a}
\E_\bx \tilde U^\theta(\BX(n_eT_e)) \leq \varsigma=\tilde U^\theta(\bx).
\end{equation}
As a result of \eqref{et3.8}, \eqref{et3.8a} and the Markov property of $\BX$, the sequence
$\{Y(k): k\in\N\}$ where $Y(k):=\tilde U^\theta(\BX(kn_eT_e))$ is a supermartingale.
For $\lambda\leq\varsigma$, let $\tilde\xi_\lambda:=\inf\{k\in\N: Y(k)\geq \lambda\}$.
If $U^\theta(\bx)\leq \lambda\eps$ then we have
\begin{equation}\label{e:EY_ineq}
\E_\bx  Y(k\wedge\tilde\xi_\lambda)\leq\E_\bx Y(0)=U^\theta(\bx)\leq \lambda\eps\,\text{ for all }\, k\in\N.
\end{equation}

We have $\lambda\leq \varsigma$ by assumption and $Y(k)\leq \varsigma$ for any $k$. As a result  \eqref{e:EY_ineq} combined with the Markov inequality yields
$$\PP_\bx\{\tilde\xi_\lambda\leq k\}\leq \lambda^{-1}\E_\bx Y(k\wedge\tilde\xi_\lambda)\leq \eps,
$$
where we used the fact that the event $\{Y(k\vee \wtd\xi_\lambda)\geq  \lambda\}$ is the same as $\{\wtd\xi_\lambda\leq k\}$.
Next, let $k\to\infty$ to get
\begin{equation}\label{et3.7}
\PP_\bx\{\tilde\xi_\lambda<\infty\}\leq \eps.
\end{equation}

Because the solution starting in $\R^{3,\circ}_+$ will remain with probability 1 in $\R^{3,\circ}_+$ and because of the Feller property of $\BX$,
it is not hard to show that for a given compact set $\K\subset\R^{3,\circ}_+$ with nonempty interior, and for any $\eps>0$
there exists a compact set $\wtd\K\subset \R^{3,\circ}_+$ such that
\begin{equation}\label{et3.9}
\PP_\bx\{\BX(t)\in \wtd\K\,\text{ for all }\,t\in[0,n_eT_e]\}>1-\eps,\, \bx\in\K.
\end{equation}

We show by contradiction that $\BX$ is transient.
If the process $\BX$ is recurrent in $\R^{3,\circ}_+$,
then $\BX$ will enter $\K$ in a finite time almost surely given that $\BX(0)\in\R^{3,\circ}_+$.
By the strong Markov property and \eqref{et3.9}, we have
\begin{equation}\label{et3.10}
\PP_\bx\{\BX(kn_eT_e)\in \wtd\K, \text{ for some } k\in\N\}>1-\eps\,,\bx\in\R^{3,\circ}_+.
\end{equation}

Pick a $\lambda\in(0,\varsigma)$ such that $U^\theta(\bx)>\lambda$ for any $\bx\in \wtd\K$.
If  the starting point $\bx$ satisfies $U^\theta(\bx)\leq\lambda\eps$, then  \eqref{et3.7} and \eqref{et3.10} contradict. As a result $\BX$ is transient.

This implies that any weak$^*$-limit of
$P(t,\bx,\cdot)$ is an invariant probability measure with support on $\partial\R^n_+$.
Similar computations to the ones from Lemma \ref{lm3.3} show that if
$P(t_k,\bx_0,\cdot)$ with $\lim_{k\to\infty}t_k=\infty$ converges weakly to $\pi$, and $h(\cdot)$ is a continuous function on $\R^n_+$ such that for all $\bx\in\R_+^n$ we have
$|h(\bx)|<K(1+\|\bx\|)^{\delta},\delta<\delta_0$
then
$\int_{\R^n_+}h(\bx)P(t_k,\bx_0, d\bx)\to \int_{\R^n_+}h(\bx)\pi(d\bx).$

For any
$\pi$ with $\suppo(\pi)\subset\partial\R^n_+$,
we have
$$\int_{\R^n_+}\left(\bigwedge_{i=1}^n x_i\right)^{\delta}\pi(d\bx)=0,$$
and

\[
\left | \left(\bigwedge_{i=1}^n x_i\right)\right |^{\delta}\leq K(1+\|\bx\|)^{\delta}.
\]
These facts imply
$$\lim_{t\to\infty}\int_{\R^n_+}\left(\bigwedge_{i=1}^n x_i\right)^{\delta}P(t_k,\bx_0, d\bx)=0$$
which finishes the proof.
\end{proof}

\begin{lm}\label{lm4.9}
For all $\bx\in\R_+^3$
\begin{equation}\label{lm4.9-e1}
\PP_{\bx}\left\{\lim_{t\to\infty}\dfrac1t\int_0^tg_i(\BX(s))dE_i(s)=0\right\}=1
\end{equation}
and
\begin{equation}\label{lm4.9-e2}
\PP_{\bx}\left\{\lim_{t\to\infty}\dfrac1t\int_0^t\dfrac{\sum X_i(t)g_i(\BX(s))dE_i(s)}{X_1(t)+X_2(t)+X_3(t)}=0\right\}=1.
\end{equation}
\end{lm}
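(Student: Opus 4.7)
The plan is to realize each integral as a continuous local martingale and apply the strong law of large numbers for continuous local martingales (via the Dambis--Dubins--Schwarz time-change and the Brownian SLLN), after verifying that the associated quadratic variations grow at most linearly in $t$.

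For \eqref{lm4.9-e1}, the process $M_i(t) := \int_0^t g_i(\BX(s))\,dE_i(s)$ is a continuous local martingale with $\langle M_i\rangle_t = \sigma_{ii} \int_0^t g_i^2(\BX(s))\,ds$. By the time-change theorem, $M_i(t) = W_i(\langle M_i\rangle_t)$ for a standard Brownian motion $W_i$; hence $M_i(t)/t \to 0$ a.s.\ whenever $\limsup_{t\to\infty} \langle M_i\rangle_t / t < \infty$ a.s.\ (on the complementary event $\{\langle M_i\rangle_\infty < \infty\}$, $M_i$ converges and $M_i(t)/t \to 0$ trivially). For \eqref{lm4.9-e2}, the corresponding local martingale has each $dE_j$-integrand equal to $X_j(s) g_j(\BX(s))/(X_1(s)+X_2(s)+X_3(s))$, which is dominated by $|g_j(\BX(s))|$ since $X_j/(X_1+X_2+X_3) \in [0,1]$; its quadratic variation is therefore also bounded by a constant multiple of $\int_0^t \sum_j g_j^2(\BX(s))\,ds$, and the same reduction applies.

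Thus, matters reduce to showing $\limsup_{t\to\infty} \frac{1}{t} \int_0^t \sum_j g_j^2(\BX(s))\,ds < \infty$ a.s. To this end, I apply It\^o's formula to the Lyapunov function $\Phi(\bx) := \ln(1+\bc^\top\bx)$, producing
\[
\Phi(\BX(t)) = \Phi(\BX(0)) + \int_0^t \psi(\BX(s))\,ds + L(t),
\]
where $L$ is a continuous local martingale whose quadratic variation satisfies $\langle L\rangle_t \leq C_1 \int_0^t \sum_j g_j^2(\BX(s))\,ds$ (its integrand coefficients $c_j X_j(s)/(1+\bc^\top\BX(s))$ are bounded), and
\[
\psi(\bx) = \dfrac{\sum_j c_j x_j f_j(\bx)}{1+\bc^\top\bx} - \dfrac{1}{2}\dfrac{\sum_{j,k} \sigma_{jk} c_j c_k x_j x_k g_j(\bx) g_k(\bx)}{(1+\bc^\top\bx)^2}.
\]
By \eqref{a.tight} together with continuity of $\psi$, there exists $K > 0$ such that $\psi(\bx) + \gamma_b\bigl(1 + \sum_j (|f_j(\bx)| + g_j^2(\bx))\bigr) \leq K$ for all $\bx \in \R^3_+$. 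Writing $S(t) := \int_0^t \sum_j g_j^2(\BX(s))\,ds$ and using $\Phi \geq 0$, we obtain the inequality $\gamma_b S(t) \leq K t + \Phi(\BX(0)) + L(t)$ with $\langle L\rangle_t \leq C_1 S(t)$.

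This estimate is apparently self-referential, and resolving this circularity is the main technical obstacle. The resolution is to apply the SLLN for continuous local martingales to $L$ itself: on $\{\langle L\rangle_\infty < \infty\}$, $L$ converges a.s.\ and $\gamma_b S(t) \leq Kt + O(1)$ immediately gives $\limsup_t S(t)/t \leq K/\gamma_b$; on the complement $\{\langle L\rangle_\infty = \infty\}$, the Dambis--Dubins--Schwarz argument yields $L(t)/\langle L\rangle_t \to 0$ and hence $L(t)/S(t) \to 0$, so the inequality becomes $\gamma_b S(t) \leq Kt + O(1) + o(S(t))$, again forcing $\limsup_t S(t)/t < \infty$. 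Feeding this back into the time-change reduction gives both \eqref{lm4.9-e1} and \eqref{lm4.9-e2}.
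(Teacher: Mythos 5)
Your proof is correct, and the argument is carefully carried out. The paper's own proof is just a citation to Lemma~5.8 of \cite{HN16} (noting that the second assertion follows ``in the same way''), so a direct comparison isn't possible from the text alone; your route---expressing each stochastic integral as a local martingale, invoking the Dambis--Dubins--Schwarz SLLN, and reducing everything to $\limsup_t S(t)/t<\infty$ where $S(t):=\int_0^t \sum_j g_j^2(\BX(s))\,ds$---is the standard one and is almost certainly what the cited lemma does.

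Two small remarks. First, you correctly identified and resolved the apparent circularity in the Lyapunov-function step: the crucial observation is that $\langle L\rangle_t \leq C_1 S(t)$ together with $L(t)/\langle L\rangle_t\to 0$ on $\{\langle L\rangle_\infty=\infty\}$ gives $|L(t)|/S(t)\leq C_1\,|L(t)|/\langle L\rangle_t\to 0$ (since $\langle L\rangle_t\to\infty$ forces $S(t)\to\infty$), while on the complementary event $L$ converges outright; both cases give $\gamma_b S(t)\leq Kt+O(1)+o(S(t))$ and hence the linear bound. Second, you correctly noted that for \eqref{lm4.9-e2} the integrands $X_j(s)/(X_1(s)+X_2(s)+X_3(s))\in[0,1]$, which makes the quadratic variation bounded by a constant multiple of $S(t)$---this sidesteps the need for the auxiliary fact $\limsup_t t^{-1}\ln(X_1(t)+X_2(t)+X_3(t))\leq 0$ that the paper mentions as the ingredient for the second assertion, so your treatment of the second part is arguably cleaner than what the paper gestures at.
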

\begin{proof}
Equation \eqref{lm4.9-e1} is proved in \cite[Lemma 5.8]{HN16}.
Equation \eqref{lm4.9-e2} can be proved in the same way using the fact that almost surely
\begin{equation}\label{lm4.9-e3}
\limsup_{t\to\infty}\dfrac{\ln (X_1(t)+X_2(t)+X_3(t))}t\leq 0.
\end{equation}
\end{proof}
\begin{lm}\label{lm4.8}
For any $\bx\in\R_+^{3,\circ}$
$$
\PP_\bx\left\{\limsup_{t\to\infty}\dfrac{\ln \dist(\BX(t),\partial\R^3_+)}{t}
\leq -\dfrac{2\rho^*}{p_1+p_2+p_3}\right\}=1.
$$
\end{lm}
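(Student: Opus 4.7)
The plan is to apply It\^o's formula to $\ln U(\BX(t))$, where $U$ is the Lyapunov function from Proposition~\ref{prop4.1}, and then combine the strong law of large numbers for the resulting martingale (Lemma~\ref{lm4.9}) with a drift estimate derived from the choice of weights in \eqref{e.p2}. Writing $\ln U(\bx)=\ln(1+\bc^\top\bx)+\sum_{i=1}^3 p_i\ln x_i - p_0\ln(x_1+x_2+x_3)$, It\^o's formula yields
\[
\ln U(\BX(t))-\ln U(\BX(0))=\int_0^t \Psi(\BX(s))\,ds+M(t),
\]
where $\Psi=\Lom\ln U$ and $M(t)$ is a local martingale whose integrands are $g_i(\BX)$ times the uniformly bounded factors $\tfrac{c_i X_i}{1+\bc^\top\BX}\le 1$, $p_i$, and $\tfrac{p_0 X_i}{X_1+X_2+X_3}\le p_0$. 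The boundedness of these coefficients together with Assumption~\ref{a.extn2} means the argument of Lemma~\ref{lm4.9} carries over to give $M(t)/t\to 0$ almost surely.

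For the drift term, I claim $\limsup_{t\to\infty}\tfrac{1}{t}\int_0^t\Psi(\BX(s))\,ds\leq -2\rho^*$ almost surely. Every weak-$^*$ limit point of the occupation measures $\Pi_t^\bx$ is an invariant probability measure supported on $\partial\R^3_+$ (by the tightness argument inside the proof of Theorem~\ref{thm4.1}), hence lies in $\Conv(\{\bdelta^*,\mu_1,\mu_2,\mu_3\})$. A direct computation using Lemma~\ref{lm3.2}, equation~\eqref{e.sum}, and Proposition~\ref{p:zero} shows $\int\Psi\,d\mu_j=\sum_i p_i\lambda_i(\mu_j)\leq -2\rho^*$ for $j=1,2,3$ by \eqref{e.p2}. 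The contribution of $\bdelta^*$ is handled via $\lambda_i(\bdelta^*)>0$ for all $i$ (rock-paper-scissors hypothesis), which implies that the weak-$^*$ limits place asymptotically no mass near the origin; after interpreting the singular $p_0$-term in $\Psi$ by direction of approach, a uniform integrability argument analogous to Lemma~\ref{lm3.3} promotes the weak convergence to the desired pointwise bound on the time averages.

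Combining the two estimates gives $\limsup_{t\to\infty}\tfrac{\ln U(\BX(t))}{t}\leq -2\rho^*$ a.s. Since $\sum_i p_i\ln X_i(t)\geq (p_1+p_2+p_3)\ln\min_i X_i(t)$, this rearranges to
\[
(p_1+p_2+p_3)\ln\dist(\BX(t),\partial\R^3_+)\leq \ln U(\BX(t))-\ln(1+\bc^\top\BX(t))+p_0\ln(X_1(t)+X_2(t)+X_3(t)),
\]
and by the tightness implied by Assumption~\ref{a.tight} the quantities $\ln(1+\bc^\top\BX(t))$ and $\ln(X_1(t)+X_2(t)+X_3(t))$ are $o(t)$ a.s.\ (the lower tail of $\ln S$ only strengthens the bound). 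Dividing by $(p_1+p_2+p_3)t$ and taking $\limsup$ yields the claim. The main obstacle is the drift estimate: $\Psi$ is not uniformly bounded and has a removable $0/0$ singularity at the origin, so converting the weak-$^*$ convergence of the occupation measures into an almost sure bound on $\tfrac{1}{t}\int_0^t\Psi(\BX(s))\,ds$ requires a uniform integrability argument modeled on Lemma~\ref{lm3.3}, combined with the fact that under the rock-paper-scissors hypothesis the origin acts as a linear repeller and so carries no asymptotic occupation mass.
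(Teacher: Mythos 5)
Your overall plan---apply It\^o to $\ln U(\BX(t))$, show the martingale term is $o(t)$ by Lemma~\ref{lm4.9}, bound the drift time-average using the weak$^*$ limit points of the occupation measures and the weights from~\eqref{e.p2}, and then rearrange via $\sum_i p_i\ln X_i\geq(p_1+p_2+p_3)\ln\min_i X_i$---is a reasonable reorganization of the paper's argument. The paper instead splits this into two stages: first it shows the weak$^*$ limits of $\Pi_t$ live in $\Conv(\mu_1,\mu_2,\mu_3)$ (no mass at $\bdelta^*$), and only then applies It\^o to the simpler functional $\sum_i p_i\ln X_i(t)$, whose drift $\sum_i p_i(f_i-\tfrac{\sigma_{ii}}{2}g_i^2)$ is continuous and has no singularity to interpret. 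Your one-step version via $\ln U$ is legitimate in principle, but it makes the singularity at the origin more delicate, not less.

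The genuine gap is the step you yourself flag as ``the main obstacle'': you assert, but do not prove, that the weak$^*$ limits of the occupation measures place no mass on $\bdelta^*$, citing only that $\lambda_i(\bdelta^*)>0$. That assertion is exactly the nontrivial content, and without it your drift bound fails. Indeed, the upper semicontinuous extension of your $\Psi=\Lom\ln U$ at the origin is $\sum_i p_i\lambda_i(\bdelta^*)-p_0\min_i\lambda_i(\bdelta^*)$; from \eqref{e.p2} one only gets $\sum_i p_i\lambda_i(\bdelta^*)\geq 2\rho^*-p_0\min_i\lambda_i(\bdelta^*)$, so $\Psi(\0)\geq 2\rho^*-2p_0\min_i\lambda_i(\bdelta^*)$, which is typically \emph{positive}. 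Thus for $\pi=(1-\rho)\pi_1+\rho\bdelta^*$ with $\rho>0$ and $\pi_1\in\Conv(\mu_1,\mu_2,\mu_3)$, the bound $\int\Psi\,d\pi\leq-2\rho^*$ simply does not hold, and a Lemma~\ref{lm3.3}-type uniform-integrability upgrade of weak convergence cannot rescue it. The paper closes this gap by a contradiction argument you need to reproduce: suppose a limit $\pi$ had $\bdelta^*$-mass $\rho>0$; since the drift of $\ln(X_1+X_2+X_3)$ integrates to $0$ against each $\mu_j$ by~\eqref{e.sum} and is $\geq\min_i\lambda_i(\bdelta^*)>0$ at $\0$, one obtains $\limsup_t \tfrac{1}{t}\ln(X_1(t)+X_2(t)+X_3(t))\geq\rho\min_i\lambda_i(\bdelta^*)>0$ with positive probability after using Lemma~\ref{lm4.9}; but \eqref{lm4.9-e3} says this limsup is $\leq 0$ almost surely. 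Only after this exclusion does your (or the paper's) drift estimate go through.
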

\begin{proof}
First, we show that
for any $\bx\in\R^{3,\circ}_+$,
$$\PP_\bx\Big\{\U(\omega)\subset\Conv(\mu_1,\mu_2,\mu_3)\Big\}=1.$$
 Assume by contradiction that with a positive probability, there is a
(random) sequence $\{t_k\}$ with $\lim_{k\to\infty}t_k=\infty$ such that
 $\wtd \Pi_{t_k}(\cdot)$ converges weakly to an invariant probability of the form
$\pi=(1-\rho)\pi_1+\rho\bdelta^*$
where $\rho\in(0,1]$ and $\pi_1\in\Conv(\mu_1,\mu_2,\mu_3)$.
Define
\[
\Psi(\bx):=\dfrac{\sum_{i=1}^3x_if_i(\bx)}{x_1+x_2+x_3}-\dfrac{\sum_{ij} x_ix_jg_i(\bx)g_j(\bx))\sigma_{ij}}{2(x_1+x_2+x_3)^2}, ~\bx\neq\0
\]
and $\Psi(\0)= \min{\lambda_i(\bdelta)}>0$. One can show, similarly to \eqref{e2.4}, that for all $\bx\neq 0$
\begin{equation*}
 \dfrac{x_1f_1(\bx)+x_2f_2(\bx)+x_3f_3(\bx)}{x_1+x_2+x_3}-\dfrac{\sum_{i,j=1}^3\sigma_{ij}x_ix_jg_i(\bx)g_j(\bx)}{2(x_1+x_2+x_3)^2}
\geq \min_i\left\{f_i(\bx)-\dfrac{\sigma_{ii}g_i^2(\bx)}2\right\}.
\end{equation*}
This together with Lemma \ref{lm4.9} show that with a positive probability
\begin{equation}\label{lm4.8-e2}
\begin{aligned}
\limsup_{k\to\infty}\dfrac1{t_k}&\int_0^{t_k}\left(\dfrac{\sum_{i=1}^3X_if_i(\BX(s))}{X_1(t)+X_2(t)+X_3(t)}-\dfrac{\sum_{ij} X_i(t)X_j(t)g_i(\BX(t))g_j(\BX(t))\sigma_{ij}}{2(X_1(t)+X_2(t)+X_3(t))^2}\right)ds\\
\geq&\int_{\R^3_+}\Psi(\bx)\pi(d\bx)\\
=&(1-\rho)\int_{\R^3_+}\Psi(\bx)\pi_1(d\bx)+\rho\Psi(\0)\\
=&\rho\Psi(\0)>0.
\end{aligned}
\end{equation}
As a result of \eqref{lm4.9-e2}, \eqref{lm4.8-e2} and It\^o's formula we get that with positive probability
$$
\begin{aligned}
\limsup_{k\to\infty}&\dfrac{\ln (X_1(t_k)+X_2(t_k)+X_3(t_k))}t\\
=&\lim_{k\to\infty}\dfrac1{t_k}\int_0^{t_k}\left(\dfrac{\sum_{i=1}^3 X_if_i(\BX(s))}{X_1(t)+X_2(t)+X_3(t)}-\dfrac{\sum_{ij} X_i(t)X_j(t)g_i(\BX(t))g_j(\BX(t))\sigma_{ij}}{2(X_1(t)+X_2(t)+X_3(t))^2}\right)ds\\
>&0
\end{aligned}
$$
which contradicts \eqref{lm4.9-e3}.
As a result of \eqref{lm4.9-e1}, \eqref{e.p2} and It\^o's formula
$$
\begin{aligned}
\limsup_{t\to\infty}\sum_{i=1}^3&p_i\dfrac{\ln X_i(t)}{t}\\
=&\limsup_{t\to\infty}\dfrac1{t}\sum_{i=1}^3 p_i\int_0^{t}\left[\left(f_i(\BX(s))-\dfrac{\sigma_{ii}g_i^2(\BX(s))}2\right)ds+g_i(\BX(s))dE_i(s)\right]\\
=&\limsup_{t\to\infty}\dfrac1t\sum_{i=1}^3 p_i\int_0^t\left(f_i(\BX(s))-\dfrac{\sigma_{ii}g_i^2(\BX(s))}2\right)ds\\
\leq& \sum_{i=1}^3 p_i\sup_{\mu\in\Conv(\mu_1,\mu_2,\mu_3)}\int_{\R^3_+}\left(f_i(\bx)-\dfrac{g_i^2(\bx)}2\right)\mu(d\bx)\leq-2\rho^*.
\end{aligned}
$$
As a result
$$
\limsup_{t\to\infty}\dfrac{\ln \dist(\BX(t),\partial\R^3_+)}{t}
\leq -\dfrac{2\rho^*}{p_1+p_2+p_3}.
$$
This finishes the proof.
\end{proof}
\rps*
\begin{proof}
This follows from Theorems \ref{thm3.1} and \ref{thm4.1}.
\end{proof}

\section{Classification}\label{s:results}
In this section we will list all the possible dynamics (up to permutation) of the stochastic Kolmogorov system \eqref{e:system}. Assumption \ref{a.nonde} is supposed to always hold, and for the extinction results we assume Assumption \ref{a.extn2} holds.

Below, when we will make use of Theorem \ref{t:ex2}, it will be enough to write out what the set of attracting ergodic measures, $\M_1$, is.
 If we say, for example, that $\BX$ converges to $\mu$, what we mean is that $\M_1=\{\mu\}$ and Theorem \ref{t:ex2} holds.
\subsection{All species survive on their own: $\lambda_i(\bdelta^*)>0, i=1,2,3$}
This condition implies that for any $i\in\{1, 2, 3\}$ there exists a unique invariant measure $\mu_i$ with support equal to $\Ri$.
\begin{enumerate}[label=1.\arabic*]
\item All axes are attractors: $\lambda_j(\mu_i)<0$, for $i,j\in\{1,2,3\}, i\ne j$. Then the process converges w.p. 1 to one of the invariant measures $\mu_i, i\in\{1, 2, 3\}$, and with strictly positive probability to $\mu_j$ if $j\in\{1,2,3\}$.
\item Two axes are attractors: $\lambda_j(\mu_i)<0$ for $i\in\{1,2\}$, $j\in\{1,2,3\}, i\ne j$. If $\max_{i}\{\lambda_i(\mu_3)\}>0$ then the process converges w.p. 1 to one of the invariant measures $\mu_i,  i\in\{1,2\}$, and with strictly positive probability to $\mu_j$ if $j\in\{1, 2\}$
\item One axis is an attractor: $\lambda_i(\mu_1)<0$ for $i\in\{2,3\}$,  $\lambda_3(\mu_2)>0$,  $\lambda_2(\mu_3)  >0$. There exists an invariant measure $\mu_{23}$ on $\R_{23+}^\circ$.
If $\lambda_1(\mu_{23})>0$, the process converges to $\mu_1$. If $\lambda_1(\mu_{23})<0$, the process converge either to $\mu_1$ or $\mu_{23}$.
\item One axis is an attractor: $\lambda_i(\mu_1)<0$ for $i\in\{2,3\}$,  $\lambda_3(\mu_2)>0$, $\lambda_2(\mu_3)<0$, $\lambda_1(\mu_3) >0$. Then the process converges to $\mu_1$.
\item One axis is an attractor: $\lambda_i(\mu_1)<0$ for $i\in\{2,3\}$, $\lambda_1(\mu_2)>0$, $\lambda_3(\mu_2)<0$, $\max\{\lambda_1(\mu_3),\lambda_2(\mu_3)\}>0$. The process converges to $\mu_1$.

\item No axis is an attractor, no face has an invariant measure (Rock-Paper-Scissors): $\lambda_2(\mu_1)>0$, $\lambda_3(\mu_1)<0$, $\lambda_3(\mu_2)>0$, $\lambda_1(\mu_2)<0$, $\lambda_1(\mu_3)>0,\lambda_2(\mu_3)<0$. If $|\lambda_2(\mu_1)\lambda_3(\mu_2)\lambda_1(\mu_3)|>|\lambda_3(\mu_1)\lambda_1(\mu_2)\lambda_2(\mu_3)|$ we get persistence. If $|\lambda_2(\mu_1)\lambda_3(\mu_2)\lambda_1(\mu_3)|<|\lambda_3(\mu_1)\lambda_1(\mu_2)\lambda_2(\mu_3)|$ then we get extinction in the following sense:
    \[
    \lim_{t\to\infty}\E_\bx \left(\bigwedge_{i=1}^3 X_i(t)\right)^{\delta}=0, \text{for all small enough}~\delta.
    \]
    Furthermore, there exists $\alpha>0$ such that with probability $1$
\[
\limsup_{t\to\infty} \frac{\ln \left(\dist(\BX(t),\partial\R^3_+)\right)}{t}<-\alpha.
\]
\item No axis is an attractor, one face has an invariant measure: $\lambda_2(\mu_1)>0$, $\lambda_3(\mu_1)<0$, $\lambda_1(\mu_2)>0$, $\lambda_3(\mu_2)<0$ and $\max\{\lambda_1(\mu_3),\lambda_2(\mu_3)\}>0$. There exists $\mu_{12}$.
If $\lambda_3(\mu_{12})>0$, the system is persistent. \textit{Assumption \ref{a:hof} can be seen to hold as follows: Suppose $\lambda_1(\mu_3)>0.$ Then let $p_2=1$ and pick $p_3>0$ small enough such that $\lambda_2(\mu_1)+p_3\lambda_3(\mu_1)>0$. Finally, since $\lambda_1(\mu_2), \lambda_1(\mu_3) >0$ we can pick $p_1>0$ large enough such that $p_1\lambda_1(\mu_2)+ p_3\lambda_3(\mu_2)>0$ and $p_1\lambda_1(\mu_3)+ \lambda_2(\mu_3)>0$.}

If $\lambda_3(\mu_{12})<0$, the process converges to $\mu_{12}$.
\item No axis is an attractor, one face has an invariant measure: $\lambda_2(\mu_1)>0$, $\lambda_3(\mu_1)<0$, $\lambda_i(\mu_2)>0$, $i\in\{1,3\}$, $\lambda_1(\mu_3)>0, \lambda_2(\mu_3)<0$. There exists $\mu_{12}$.
If $\lambda_3(\mu_{12})>0$, the system is persistent. \textit{Assumption \ref{a:hof} can be seen to hold as follows: Let $p_2=1$ and pick $p_3>0$ small enough such that $\lambda_2(\mu_1)+p_3\lambda_3(\mu_1)>0$. Then pick $p_1>0$ large enough such that $p_1\lambda_1(\mu_3)+ \lambda_2(\mu_3)>0$. }

If $\lambda_3(\mu_{12})<0$,
the process converges to $\mu_{12}$.
\item No axis is an attractor, two faces have invariant measures: $\lambda_2(\mu_1)>0$, $\lambda_3(\mu_1)<0$, $\lambda_i(\mu_2)>0$, $i\in\{1,3\}$, $\lambda_2(\mu_3)>0$. The invariant measures $\mu_{12},\mu_{23}$ exist.
If $\lambda_3(\mu_{12})>0, \lambda_1(\mu_{23})>0$, the system is persistent.
\textit{Assumption \ref{a:hof} can be seen to hold as follows: Let $p_2=1$ and pick $p_3>0$ small enough such that $\lambda_2(\mu_1)+p_3\lambda_3(\mu_1)>0$. Then pick $p_1>0$ small enough such that $p_1\lambda_1(\mu_3)+ \lambda_2(\mu_3)>0$. }

If $\lambda_3(\mu_{12})<0, \lambda_1(\mu_{23})>0$,
the process converges to $\mu_{12}$.
If $\lambda_3(\mu_{12})>0, \lambda_1(\mu_{23})<0$,
the process converges to $\mu_{23}$.
If $\lambda_3(\mu_{12})<0, \lambda_1(\mu_{23})<0$,
the process converges to either $\mu_{12}$ or $\mu_{23}$.
\item No axis is an attractor, all faces have invariant measures: $\lambda_j(\mu_i)>0$, for $i,j\in\{1,2,3\}, i\ne j$. The invariant measures $\mu_{12},\mu_{13},\mu_{23}$ exist.
Without loss of generality, suppose $\lambda_3(\mu_{12})\leq\lambda_3(\mu_{23})\leq\lambda_1(\mu_{23})$
If they are all positive, the system is persistent.
\textit{Assumption \ref{a:hof} can be seen to hold as follows: Pick any $p_1, p_2, p_3>0.$ }

If they are all negative, with probability $1$ the process converges to one of them.
If $\lambda_3(\mu_{12})<0<\lambda_3(\mu_{23})\leq\lambda_1(\mu_{23})$,
the process converges to $\mu_{12}$.
If $\lambda_3(\mu_{12})\leq\lambda_3(\mu_{23})<0<\lambda_1(\mu_{23})$,
the process converges to $\mu_{12}$ or $\mu_{23}$.
\end{enumerate}

\subsection{Two species survive on their own: $\lambda_i(\bdelta^*)>0, i=1,2$ and $\lambda_3(\bdelta^*)<0$}
For any $i\in\{1, 2\}$ there exists a unique invariant measure $\mu_i$ with support equal to $\Ri$.
\begin{enumerate}[label=2.\arabic*]
\item Two axes  are attractors $\lambda_j(\mu_i)<0$, for $i\in\{1,2\}, j\in\{1,2,3\}\setminus\{i\}$. Then the process converges w.p. 1 to one of the invariant measures $\mu_i, i\in\{1, 2\}$, and with strictly positive probability to $\mu_j$ if $j\in\{1,2\}$.
\item One axis is an attractor, no face has an invariant measure: $\lambda_2(\mu_1)<0, \lambda_3(\mu_1)<0$ and $\lambda_1(\mu_2)>0, \lambda_3(\mu_2)<0$. Then the process converges to $\mu_1$.
\item One axis is an attractor, one face has an invariant measure: $\lambda_2(\mu_1)<0, \lambda_3(\mu_1)<0$ and $\lambda_3(\mu_2)>0$. Then $\mu_{23}$ exists.
     \begin{enumerate}
       \item  If $\lambda_1(\mu_{23})>0$ the process converges to $\mu_1$.
       \item If $\lambda_1(\mu_{23})<0$ the process converges to $\mu_1$ or to $\mu_{23}$.
     \end{enumerate}

\item No axis is an attractor, only face $12$ has an invariant measure: $\lambda_3(\mu_1)<0, \lambda_3(\mu_2)<0, \lambda_2(\mu_1)>0, \lambda_1(\mu_2)>0$. Then $\mu_{12}$ exists.
    \begin{enumerate}
      \item  If $\lambda_3(\mu_{12})<0$ the process converges to $\mu_{12}$.
      \item If $\lambda_3(\mu_{12})>0$ there is persistence. \textit{Assumption \ref{a:hof} can be seen to hold as follows: Let $p_3=1$ and pick $p_1>0$ large enough such that $p_1\lambda_1(\mu_2)+p_3\lambda_3(\mu_2)>0$. Then pick $p_2>0$ large enough such that $p_2\lambda_2(\mu_1)+ \lambda_3(\mu_1)>0$ and $p_1\lambda_1(\bdelta^*) + p_2\lambda_2(\bdelta^*)+ \lambda_3(\bdelta^*)>0$. }
    \end{enumerate}

\item No axis is an attractor, only face $13$ has an invariant measure: $\lambda_3(\mu_1)>0, \lambda_3(\mu_2)<0$, $\lambda_1(\mu_2)>0, \lambda_2(\mu_1)<0$. Then $\mu_{13}$ exists.
    \begin{enumerate}
      \item If $\lambda_2(\mu_{13})<0$ the process converges to $\mu_{13}$.
      \item If $\lambda_2(\mu_{13})>0$ there is persistence. \textit{Assumption \ref{a:hof} can be seen to hold as in the previous case with the roles of the indices $2$ and $3$ interchanged.}
    \end{enumerate}

\item No axis is an attractor, only faces $13$ and $12$ have an invariant measure: $\lambda_3(\mu_1)>0, \lambda_3(\mu_2)<0$, $\lambda_1(\mu_2)>0, \lambda_2(\mu_1)>0$. Then $\mu_{13}, \mu_{12}$ exist.
     \begin{enumerate}
       \item  If $\lambda_2(\mu_{13})>0, \lambda_3(\mu_{12})>0$ there is persistence. \textit{Assumption \ref{a:hof} can be seen to hold as follows: Let $p_2=p_3=1$ and pick $p_1>0$ large enough such that $p_1\lambda_1(\mu_2)+\lambda_3(\mu_2)>0$ and  $p_1\lambda_1(\bdelta^*) + \lambda_2(\bdelta^*)+ \lambda_3(\bdelta^*)>0$. }
       \item If $\lambda_2(\mu_{13})<0, \lambda_3(\mu_{12})>0$ the process converges to $\mu_{13}$.
       \item  If $\lambda_2(\mu_{13})>0, \lambda_3(\mu_{12})<0$ the process converges to $\mu_{12}$.
       \item   If $\lambda_2(\mu_{13})<0, \lambda_3(\mu_{12})<0$ the process converges w.p. 1 to $\mu_{12}$ or $\mu_{13}$.
     \end{enumerate}

\item No axis is an attractor, only faces $13$ and $23$ have an invariant measure: $\lambda_3(\mu_1)>0, \lambda_3(\mu_2)>0$, $\min\{\lambda_1(\mu_2), \lambda_2(\mu_1)\}<0$. Then $\mu_{13}, \mu_{23}$ exist.
\begin{itemize}
  \item Say $\lambda_1(\mu_2)<0, \lambda_2(\mu_1)>0$.
  \begin{enumerate}
    \item If $\lambda_2(\mu_{13})>0, \lambda_1(\mu_{23})>0$ there is persistence. \textit{Assumption \ref{a:hof} can be seen to hold as follows: Let $p_3=1$ and pick $p_1>0$ small enough such that $p_1\lambda_1(\mu_2)+\lambda_3(\mu_2)>0$. Then pick $p_2>0$ large enough such that $p_1\lambda_1(\bdelta^*) + p_2\lambda_2(\bdelta^*)+ \lambda_3(\bdelta^*)>0$. }
    \item  If $\lambda_2(\mu_{13})<0, \lambda_1(\mu_{23})>0$ the process converges to $\mu_{13}$.
    \item If $\lambda_2(\mu_{13})>0, \lambda_1(\mu_{23})<0$ the process converges to $\mu_{23}$.
    \item  If $\lambda_2(\mu_{13})<0, \lambda_1(\mu_{23})<0$ the process converges w.p. 1 to $\mu_{13}$ or $\mu_{23}$.
  \end{enumerate}

  \item Say $\lambda_1(\mu_2)<0, \lambda_2(\mu_1)<0$.
  \begin{enumerate}
    \item If $\lambda_2(\mu_{13})>0, \lambda_1(\mu_{23})>0$ there is persistence (this special case is treated in Section \ref{s:predprey}).
    \item If $\lambda_2(\mu_{13})<0, \lambda_1(\mu_{23})>0$ the process converges to $\mu_{13}$.
    \item If $\lambda_2(\mu_{13})>0, \lambda_1(\mu_{23})<0$ the process converges to $\mu_{23}$.
    \item If $\lambda_2(\mu_{13})<0, \lambda_1(\mu_{23})<0$ the process converges w.p. 1 to $\mu_{13}$ or $\mu_{23}$.
  \end{enumerate}

\end{itemize}
\item No axis is an attractor, all faces have an invariant measure: $\lambda_3(\mu_1)>0, \lambda_3(\mu_2)>0$, $\lambda_1(\mu_2)>0, \lambda_2(\mu_1)>0$. Then $\mu_{13}, \mu_{12}, \mu_{23}$ exist.
    \begin{enumerate}
      \item  If $\lambda_1(\mu_{23})>0, \lambda_2(\mu_{13})>0, \lambda_3(\mu_{12})>0$ there is persistence. \textit{Assumption \ref{a:hof} can be seen to hold as follows: Let $p_2=p_3=1$ and pick $p_1>0$ large enough such that $p_1\lambda_1(\bdelta^*) + \lambda_2(\bdelta^*)+ \lambda_3(\bdelta^*)>0$. }
      \item If $\lambda_1(\mu_{23})<0, \lambda_2(\mu_{13})>0, \lambda_3(\mu_{12})>0$ the process converges to $\mu_{23}$.
      \item If $\lambda_1(\mu_{23})>0, \lambda_2(\mu_{13})<0, \lambda_3(\mu_{12})>0$ the process converges to $\mu_{13}$.
      \item  If $\lambda_1(\mu_{23})>0, \lambda_2(\mu_{13})>0, \lambda_3(\mu_{12})<0$ the process converges to $\mu_{12}$.
      \item  If $\lambda_1(\mu_{23})<0, \lambda_2(\mu_{13})<0, \lambda_3(\mu_{12})>0$ the process converges w.p. 1 to $\mu_{23}$ or $\mu_{13}$.
      \item If $\lambda_1(\mu_{23})<0, \lambda_2(\mu_{13})>0, \lambda_3(\mu_{12})<0$ the process converges w.p. 1 to $\mu_{23}$ or $\mu_{12}$.
      \item If $\lambda_1(\mu_{23})>0, \lambda_2(\mu_{13})<0, \lambda_3(\mu_{12})<0$ the process converges w.p. 1 to $\mu_{13}$ or $\mu_{12}$.
      \item If $\lambda_1(\mu_{23})<0, \lambda_2(\mu_{13})<0, \lambda_3(\mu_{12})<0$ the process converges w.p. 1 to $\mu_{12}, \mu_{23}$ or $\mu_{13}$.
    \end{enumerate}
\end{enumerate}

\subsection{One species survives on its own: $\lambda_1(\bdelta^*)>0$ and $\lambda_i(\bdelta^*)<0$, i=2,3}
The condition $\lambda_1(\bdelta^*)>0$ implies that there exists a unique invariant measure $\mu_i$ with support equal to $\Rx$.
\begin{enumerate}[label=3.\arabic*]
\item One axis is an attractor: $\lambda_2(\mu_1)<0, \lambda_3(\mu_1)<0$. Then the process converges to $\mu_1$.
\item No axis is an attractor, one face has an invariant measure: $\lambda_2(\mu_1)>0, \lambda_3(\mu_1)<0$. Then $\mu_{12}$ exists.
\begin{enumerate}
  \item If $\lambda_3(\mu_{12})>0$ there is persistence. \textit{Assumption \ref{a:hof} can be seen to hold as follows: Let $p_3=1$ and pick $p_2>0$ large enough such that  $p_2\lambda_2(\mu_1) + \lambda_3(\mu_1)>0$. Next, pick $p_1$ large enough such that
      $p_1\lambda_1(\bdelta^*) + p_2\lambda_2(\bdelta^*)+ \lambda_3(\bdelta^*)>0$. }
  \item If $\lambda_3(\mu_{12})<0$ the process converges to $\mu_{12}$.
\end{enumerate}

\item No axis is an attractor, two faces have invariant measures: $\lambda_2(\mu_1)>0, \lambda_3(\mu_1)>0$. Then $\mu_{12}, \mu_{13}$ exist.
\begin{enumerate}
  \item If $\lambda_3(\mu_{12})>0, \lambda_2(\mu_{13})>0$ there is persistence. \textit{Assumption \ref{a:hof} can be seen to hold as follows: Let $p_2=p_3=1$ and pick $p_1>0$ large enough such that $p_1\lambda_1(\bdelta^*) + \lambda_2(\bdelta^*)+ \lambda_3(\bdelta^*)>0$. }
  \item If $\lambda_3(\mu_{12})<0, \lambda_2(\mu_{13})>0$ the process converges to $\mu_{12}$.
  \item If $\lambda_3(\mu_{12})>0, \lambda_2(\mu_{13})<0$ the process converges to $\mu_{13}$.
  \item  If $\lambda_3(\mu_{12})<0, \lambda_2(\mu_{13})<0$ the process converges w.p. 1 to $\mu_{12}$ or $\mu_{13}$.
\end{enumerate}

\end{enumerate}

\section{Applications}\label{s:LV}
Our main results concern the classification of the possible asymptotic outcomes of three-dimensional Kolmogorov systems. In this section, we first show how for many $3$-dimensional Lotka--Volterra systems that our assumptions, and therefore our results, hold. In particular, we  prove that the Lyapunov exponents can be computed explicitly by solving a system of linear equations. Second, we give an example of a modified Lotka-Volterra system where the conditions for stochastic persistence are less restrictive than the conditions for permanence of the corresponding deterministic model.

\subsection{Lotka-Volterra Systems}
For the Lotka-Volterra systems, we assume the dynamics are given by the stochastic differential equations
\begin{equation}\label{e:LV}
dX_i(t)=X_i(t)\left(m_i+\sum_{j=1}^3 a_{ij} X_j(t)\right)\,dt + X_i(t)\,dE_i(t), X_i(0)=x_i\geq 0.
\end{equation}
The constant $m_i$ is the per-capita growth rate of species $i$, and $a_{ij}$ is the coefficient measuring the per-capita interaction strength of species $j$ on species $i$.

We assume that each species experiences intraspecific competition and there are no mutualistic interactions, which even for the deterministic Lotka-Volterra equations can lead to finite-time blow up of solutions.

\begin{asp}\label{asp:LV}
For the Lotka-Volterra system~\eqref{e:LV}, assume that  $a_{ii}<0$ for all $i$, and $a_{ij}>0$ for $i\neq j$ implies $a_{ji}<0$.
\end{asp}

The following is a proposition verifying \eqref{a.tight} of Assumption~\ref{a.nonde}. The rest of Assumption~\ref{a.nonde} as well as Assumption~\ref{a.extn2} follow immediately.

\begin{prop}\label{p:tight} If Assumption~\ref{asp:LV} holds, then \eqref{e:LV} satisfies \eqref{a.tight}.
\end{prop}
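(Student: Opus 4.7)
The plan is to exhibit positive weights $c_1, c_2, c_3$ and a sufficiently small $\gamma_b > 0$ for which the limsup in \eqref{a.tight} is strictly negative. Since $g_i \equiv 1$ and each $f_i$ is affine in $\bx$, the diffusion ratio $\tfrac12\sum \sigma_{ij}c_ic_jx_ix_jg_ig_j/(1+\bc^\top\bx)^2$ is uniformly bounded on $\R^3_+$, while $\gamma_b(1+\sum(|f_i|+g_i^2))$ grows at most linearly in $\|\bx\|$. The crucial term is therefore $\sum c_ix_if_i(\bx)/(1+\bc^\top\bx)$; writing $\sum c_ix_if_i(\bx)=\sum c_im_ix_i+Q(\bx)$ with $Q(\bx):=\sum_{i,j}c_ia_{ij}x_ix_j$, the goal reduces to choosing weights for which the quadratic form $Q$ satisfies $Q(\bx)\le -\kappa\|\bx\|^2$ on $\R^3_+$. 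Division by $1+\bc^\top\bx$ then yields linear decay in $\|\bx\|$, which dominates the $O(\|\bx\|)$ contribution of the $\gamma_b$-term once $\gamma_b$ is taken sufficiently small.

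The construction of the weights splits into two cases determined by Assumption~\ref{asp:LV}. In the purely competitive case, when $a_{ij}\le 0$ for all $i\ne j$, I would take $c_1=c_2=c_3=1$: every off-diagonal entry of $Q$ is then non-positive, the diagonal entries $a_{ii}$ are strictly negative, and one obtains $Q(\bx)\le (\min_i a_{ii})\|\bx\|^2$. This recovers Example 1.1 of \cite{HN16}. In the remaining case, the sign constraint $a_{ij}>0\Rightarrow a_{ji}<0$ guarantees that at least one entry in each mutually interacting pair is non-positive, so after relabelling the indices one may arrange $a_{12}<0$, $a_{13}\le 0$, and $a_{23}\le 0$. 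With this labelling I would set $c_3=1$, $c_2=M$, and $c_1=M(|a_{21}|+1)/|a_{12}|$, where $M>0$ is a free parameter to be chosen large at the end.

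The payoff of this choice is that the species $1$--$2$ cross contribution satisfies $(c_1a_{12}+c_2a_{21})x_1x_2\le -Mx_1x_2$, the cross terms $c_1a_{13}x_1x_3$ and $c_2a_{23}x_2x_3$ are non-positive by construction, and the remaining contributions $c_3a_{31}x_1x_3$, $c_3a_{32}x_2x_3$ together with the linear part $\sum c_im_ix_i$ are of lower order (since $c_3=1$ is fixed) and can be absorbed into a fraction of the diagonal terms $\tfrac{a_{33}}{2}(x_1^2+x_2^2+x_3^2)$ via AM--GM once $M$ is large. This gives $\sum c_ix_if_i(\bx)\le K_M+\tfrac{a_{33}}{2}\|\bx\|^2$ on $\R^3_+$, and dividing by $1+\bc^\top\bx$ produces $\sum c_ix_if_i(\bx)/(1+\bc^\top\bx)\le \gamma_1-\gamma_2\|\bx\|$ for some constants $\gamma_1,\gamma_2>0$.

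Combined with the uniform boundedness of the diffusion ratio and the linear growth of $\gamma_b(1+\sum(|f_i|+g_i^2))$, choosing $\gamma_b$ small compared to $\gamma_2$ yields \eqref{a.tight}. The main obstacle is the combinatorial step justifying the relabelling in the non-competitive case: one must isolate a species that is not a net beneficiary of the other two. Under Assumption~\ref{asp:LV}'s exclusion of mutualism this is always possible whenever the digraph of positive effects is not a directed cycle, and the purely cyclic configuration can be handled by a separate diagonal-dominance argument that exploits the strict inequality $a_{ii}<0$ directly in $Q$ rather than through pairwise cancellations.
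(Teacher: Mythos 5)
Your proposal follows essentially the same path as the paper: handle the purely competitive case with $c_1=c_2=c_3=1$, and otherwise relabel so that $a_{12}<0$, $a_{13}\le 0$, $a_{23}\le 0$, choose $c_1=M(|a_{21}|+1)/|a_{12}|$, $c_2=M$, $c_3=1$, and absorb the remaining positive cross terms into a fraction of the diagonal via AM--GM for $M$ large. One minor slip: in the competitive case you should get $Q(\bx)\le(\max_i a_{ii})\|\bx\|^2$, not $(\min_i a_{ii})\|\bx\|^2$ (you want the largest, i.e.\ least negative, diagonal entry as the bound); the conclusion is unaffected.

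Your last paragraph correctly puts its finger on a real issue with the relabelling: if the digraph of positive effects is a directed $3$-cycle --- say $a_{12},a_{23},a_{31}>0$ and $a_{21},a_{32},a_{13}<0$, which is allowed by Assumption~\ref{asp:LV} --- then none of the six orderings satisfies $a_{\sigma(1)\sigma(2)}<0$, $a_{\sigma(1)\sigma(3)}\le 0$, $a_{\sigma(2)\sigma(3)}\le 0$, because in a cyclic web every species has a positive effect on some other species, so no species can play the role of index~$3$. The paper's own proof asserts the relabelling always works and does not treat this case; in this respect your proposal and the paper's proof share the same gap, but you at least flag it.

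However, the fix you sketch --- ``a separate diagonal-dominance argument that exploits $a_{ii}<0$'' --- does not go through unconditionally. Write $q_{ij}=c_ia_{ij}+c_ja_{ji}$. Making every $q_{ij}\le 0$ requires $c_1/c_2\le |a_{21}|/a_{12}$, $c_2/c_3\le|a_{32}|/a_{23}$, $c_3/c_1\le|a_{13}|/a_{31}$, and multiplying these forces $1\le |a_{21}||a_{32}||a_{13}|/(a_{12}a_{23}a_{31})$, which is a genuine parameter restriction. When it fails, some $q_{ij}$ must be strictly positive no matter how you pick the weights, and the leftover cross term must be absorbed into $c_ia_{ii}x_i^2+c_ja_{jj}x_j^2$. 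That absorption is impossible when $|a_{ii}|$ is small: e.g.\ with $a_{ij}=1$ on the cycle, $a_{ji}=-\delta$, and $a_{ii}=-\epsilon$ for small $\delta,\epsilon$, restricting $\bx^\top\diag(\bc)A\bx$ to each coordinate plane shows that no positive $\bc$ makes it nonpositive on $\R^3_+$ (the two constraints $\rho:=c_3/c_1\ge 1/\delta^2$ and $(\rho-\delta)^2\le 4\epsilon^2\rho$ are incompatible for small $\delta,\epsilon$). So ``diagonal dominance from $a_{ii}<0$'' is not a substitute for the relabelling; it only works when the self-limitation is strong relative to the cycle, which is an extra hypothesis and not a consequence of Assumption~\ref{asp:LV}. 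If you want to complete the argument you must either restrict the class of $A$'s (e.g.\ assume the cycle condition $|a_{21}||a_{32}||a_{13}|\ge a_{12}a_{23}a_{31}$, or sufficiently strong $|a_{ii}|$) or produce a different Lyapunov construction for this case.
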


Next we show that the external Lyapunov exponents can be found by solving a system of linear equations.

\begin{prop}\label{prop:LV} Assume \eqref{e:LV} satisfies Assumption~\ref{asp:LV}. Let $\mu$ be an ergodic invariant probability measure for \eqref{e:LV}. If there exists a unique solution $\bar \bx$ to the system of linear equations
\begin{equation}\label{e:sys}
\begin{aligned}
m_i+\sum_j a_{ij}\bar x_j -\frac{\sigma_{ii}}{2}=0 &\mbox{ for }i\in I_\mu\\
\bar x_i=0& \mbox{ for }i\notin I_\mu,
\end{aligned}
\end{equation}
then
\[
\lambda_i(\mu)=m_i+\sum_j a_{ij}\bar x_j -\frac{\sigma_{ii}}{2}
\mbox{ for all }i.\]

\end{prop}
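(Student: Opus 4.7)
The plan is to start from the definition~\eqref{e:lyapunov} of $\lambda_i(\mu)$ and specialize it to the Lotka--Volterra data. For \eqref{e:LV} we have $f_i(\bx)=m_i+\sum_j a_{ij}x_j$ and $g_i\equiv 1$, so
\[
\lambda_i(\mu)=\int_{\R^3_+}\!\Big(m_i+\sum_{j=1}^3 a_{ij}x_j-\tfrac{\sigma_{ii}}{2}\Big)\mu(d\bx)=m_i-\tfrac{\sigma_{ii}}{2}+\sum_{j=1}^3 a_{ij}\tilde x_j,
\]
where $\tilde x_j:=\int x_j\,\mu(d\bx)$. Finiteness of these first moments follows from the moment bound implicit in \eqref{a.tight}, which Proposition~\ref{p:tight} confirms for the Lotka--Volterra case.

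Next I would split the indices. For $j\notin I_\mu$, the species-support characterization gives $\mu(\{x_j=0\})=1$, so $\tilde x_j=0=\bar x_j$ by construction of \eqref{e:sys}. For $i\in I_\mu$, Proposition~\ref{p:zero} yields $\lambda_i(\mu)=0$, which combined with the displayed identity gives
\[
m_i-\tfrac{\sigma_{ii}}{2}+\sum_{j\in I_\mu}a_{ij}\tilde x_j=0,\qquad i\in I_\mu.
\]
Therefore $\tilde\bx=(\tilde x_1,\tilde x_2,\tilde x_3)$ is a solution of the linear system \eqref{e:sys}.

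By the uniqueness hypothesis, $\tilde\bx=\bar\bx$. Substituting back into the first display yields the claimed identity $\lambda_i(\mu)=m_i+\sum_j a_{ij}\bar x_j-\sigma_{ii}/2$ for \emph{every} $i$ (including those outside $I_\mu$, where the formula then delivers the genuine external Lyapunov exponent). There is no real obstacle beyond the two bookkeeping items just mentioned: confirming integrability of $x_j$ against $\mu$ (via Proposition~\ref{p:tight}), and recognizing that Proposition~\ref{p:zero} simultaneously pins down all coordinates of $\tilde\bx$ indexed by $I_\mu$ as the unique solution of the linear system.
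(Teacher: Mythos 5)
Your argument is exactly the one the paper uses: specialize the definition of $\lambda_i(\mu)$ to the Lotka--Volterra drift, apply Proposition~\ref{p:zero} to force the moments $\tilde x_j=\int x_j\,\mu(d\bx)$ to solve \eqref{e:sys}, and invoke uniqueness to identify $\tilde\bx=\bar\bx$. You simply make explicit two points the paper leaves implicit (the vanishing of $\tilde x_j$ for $j\notin I_\mu$ and the integrability of the first moments), which is fine.
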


\begin{rmk} Using Proposition~\ref{prop:1d} and Remark~\ref{rmk:2d}, one easily show inductively on the cardinality $|I_\mu|=0,1,2$ that non-zero external Lyapunov exponents imply that \eqref{e:sys} has a unique solution i.e. the coefficient matrix $\{a_{ij}\}_{i,j\in I_\mu}$ restricted to the supported species is invertible.
\end{rmk}

To illustrate the applicability of our results to a specific model we consider a model of rock-paper-scissors and contrast  the difference between the deterministic and stochastic dynamics. To this end, pick $0<\beta<1<\alpha$ and consider the following system of differential equations:
  \begin{equation}\label{e:ODE}
\begin{split}
d\bar X_1(t) &= \bar X_1(t)\left(1-\bar X_1(t)-\alpha \bar X_2(t)-\beta \bar X_3(t)\right)\,dt \\
d\bar X_2(t) &= \bar X_2(t)\left(1-\beta X_1(t)-\bar X_2(t)-\alpha\bar X_3(t)\right)\,dt \\
d\bar X_3(t) &= \bar X_3(t)\left(1-\alpha \bar X_1(t) -\beta \bar X_2(t) - \bar X_3(t)\right)\,dt. \\
\end{split}
\end{equation}
This is the model introduced by \cite{ML75}. One can see that \eqref{e:ODE} has five fixed points. The origin $0$ is a source, the canonical basis vectors $e_1, e_2, e_3$ are saddle points and the interior equilibrium is given by
\[
\bar x = \left(\frac{1}{1+\alpha+\beta},\frac{1}{1+\alpha+\beta},\frac{1}{1+\alpha+\beta}\right).
\]
Let $D=\{\bx\in \R^3_+:x_1=x_2=x_3\}$ and $\Delta=\{\bx\in\R^3_+:\sum_ix_i=1\}.$ For these equations, the equilibria $e_i$ and the connecting orbits (i.e. the unstable manifolds) form a heteroclinic cycle $\Omega$. \cite{HJ89} provide the following classification of the dynamics:
\begin{enumerate}
  \item If $\alpha+\beta<2$ the interior equilibirium $\bar x$ is globally stable and all trajectories starting in $\R_+^{3,\circ}$ converge to $\bar x$.
  \item If $\alpha+\beta>2$ the interior equilibrium $\bar x$ is a saddle with stable manifold $D\setminus\{0\}$. Every trajectory starting from $\R_+^{3,\circ}\setminus D$ has $\Omega$ as its $\omega$-limit set.
  \item If $\alpha+\beta=2$ the set $\Delta$ is invariant and attracts all nonzero trajectories, $\Omega=\partial \Delta$ and trajectories starting in $\Delta^\circ\setminus \{\bar x\}$ are periodic.
\end{enumerate}

A stochastic counterpart to these equations is given by
\begin{equation}\label{e:SDE2}
\begin{split}
dX_1(t) &= X_1(t)\left(1-X_1(t)-\alpha X_2(t)-\beta X_3(t)\right)\,dt +  X_1(t) \,dE_1(t)\\
dX_2(t) &= X_2(t)\left(1-\beta X_1(t)-X_2(t)-\alpha X_3(t)\right)\,dt +  X_2(t)\,dE_2(t)\\
dX_3(t) &= X_3(t)\left(1-\alpha X_1(t) -\beta X_2(t) - X_3(t)\right)\,dt + X_3(t) \,dE_3(t)\\
\end{split}
\end{equation}
with $\Sigma = \diag(\sigma, \sigma, \sigma)$.

Using Theorem~\ref{thm:rps} we can prove the following proposition.

\begin{prop}\label{p:rps} If $\sigma<2$, then there is the following dichotomy:
\begin{enumerate}
  \item If $\alpha+\beta<2$ the species persist and the system converges to a unique invariant probability measure on $\R_+^{3,\circ}$.
  \item If $\alpha+\beta>2$ there is extinction, in the sense that for all starting points we have with probability one that $$\BX(t)\to \partial \R_+^3.$$
\end{enumerate}
\end{prop}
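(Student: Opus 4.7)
The plan is to identify \eqref{e:SDE2} as a rock-paper-scissors system in the sense of Definition~\ref{deff:rps} and then directly invoke Theorem~\ref{thm:rps}. The hypotheses on assumptions are essentially free: Proposition~\ref{p:tight} yields \eqref{a.tight} (so Assumption~\ref{a.nonde} holds), and Assumption~\ref{a.extn2} is trivial since the diffusion coefficients $g_i \equiv 1$ are bounded while $|f_i(\bx)|$ grows linearly in $\|\bx\|$. Moreover, $\sigma<2$ gives $\lambda_i(\bdelta^*) = 1 - \sigma/2 > 0$ for every $i$, so Proposition~\ref{prop:2d} (applied face by face) produces the three single-species ergodic measures $\mu_1,\mu_2,\mu_3$.

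Next I would compute the external Lyapunov exponents explicitly via Proposition~\ref{prop:LV}. For each single-species measure $\mu_i$, the linear system \eqref{e:sys} reduces to the scalar equation $1 - \bar x_i - \sigma/2 = 0$, so $\bar x_i = 1-\sigma/2$, and consequently
\[
\lambda_j(\mu_i) \;=\; 1 + a_{ji}\bar x_i - \tfrac{\sigma}{2} \;=\; (1-\tfrac{\sigma}{2})(1 + a_{ji}) \quad \text{for } j \neq i.
\]
With the cyclic pattern $a_{21}=a_{32}=a_{13}=-\beta$ and $a_{12}=a_{23}=a_{31}=-\alpha$, this gives
\[
\lambda_2(\mu_1)=\lambda_3(\mu_2)=\lambda_1(\mu_3)=(1-\tfrac{\sigma}{2})(1-\beta) > 0,
\]
\[
\lambda_3(\mu_1)=\lambda_1(\mu_2)=\lambda_2(\mu_3)=(1-\tfrac{\sigma}{2})(1-\alpha) < 0,
\]
where the sign determinations use $\sigma<2$ together with $0<\beta<1<\alpha$. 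This verifies the cyclic pattern required by Definition~\ref{deff:rps} (up to a relabeling of indices that interchanges types (a) and (b), which leaves the sign condition \eqref{eq:rps} invariant).

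Finally, I would evaluate the decisive quantity in \eqref{eq:rps}:
\[
\lambda_1(\mu_2)\lambda_2(\mu_3)\lambda_3(\mu_1) + \lambda_1(\mu_3)\lambda_2(\mu_1)\lambda_3(\mu_2) \;=\; (1-\tfrac{\sigma}{2})^3\bigl[(1-\alpha)^3 + (1-\beta)^3\bigr].
\]
Since $\sigma<2$, the prefactor is strictly positive, so the sign of this expression matches that of $(1-\alpha)^3+(1-\beta)^3$. Using monotonicity of the cube root, $(1-\alpha)^3+(1-\beta)^3 > 0$ iff $1-\beta > \alpha-1$ iff $\alpha+\beta < 2$, and similarly the reverse inequality corresponds to $\alpha+\beta > 2$. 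Theorem~\ref{thm:rps} therefore delivers strong stochastic persistence (with convergence to a unique interior invariant measure in total variation) in the first case, and the boundary-attracting conclusion \eqref{eq:rps2}, which in particular implies $\BX(t)\to\partial\R_+^3$ almost surely, in the second case. I expect no real obstacle here, only minor bookkeeping—the one subtlety is making explicit that the symmetry $(\alpha,\beta)\leftrightarrow(\beta,\alpha)$ together with cyclic relabeling of the three species reduces type (b) to type (a), so Theorem~\ref{thm:rps} applies verbatim.
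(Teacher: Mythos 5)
Your proposal is correct and follows essentially the same route as the paper: compute the single-species external Lyapunov exponents via Proposition~\ref{prop:LV}, observe the cyclic sign pattern, and invoke Theorem~\ref{thm:rps}, reducing the dichotomy to the sign of $(1-\beta)^3+(1-\alpha)^3$. One small but genuine improvement in your write-up is that you explicitly flag that \eqref{e:SDE2} is a rock-paper-scissor system of type \textbf{(b)} in the sense of Definition~\ref{deff:rps}, whereas Theorem~\ref{thm:rps} is stated for type~\textbf{(a)}, and you note that a relabeling of two species indices converts (b) to (a) while leaving the discriminant in \eqref{eq:rps} unchanged; the paper's proof silently applies the theorem without addressing this. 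Your prefactor $(1-\sigma/2)^3$ is also the correct one (the paper's displayed coefficient $\sigma^3/(2a_{11}a_{22}a_{33})$ appears to be a slip, though it cancels in the comparison and does not affect the paper's conclusion).
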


\begin{rmk}
System~\eqref{e:SDE2} is an example of a competitive, Lotka-Volterra SDE i.e. the intrinsic rates of growth $m_i$ are positive, the interspecific interaction coefficients $a_{ij}$ are non-positive for $i\neq j$, and the intraspecific interaction coefficients $a_{ii}$ are negative. For these competitive, Lotka-Volterra SDE, the results of \citet{Z93} can be used to show that these SDE can for appropriate parameter choices exhibit all of the dynamics shown in Figure~\ref{fig:one} except for type (viii) i.e. one can not have positive probability of asymptotically approaching each of the species pairs.
\end{rmk}

\subsection{Stochastic Persistence Despite Deterministic Impermanence}

In the deterministic literature, permanence is the deterministic analog of stochastic persistence. However, as we shall show, there are cases where a deterministic system is not permanent but the corresponding stochastic system is strongly stochastically persistent. To this end, we consider a modified Lotka-Volterra model of two competing species that share a predator. The modification comes from assuming that the predator exhibits a switching functional response whereby the predator spends more time searching for the more common prey species. In this model, $X_1,X_2$ denote the prey densities, and $X_3$ the predator density. The equations of motion for the deterministic model are
\begin{equation}\label{e:switch:ode}
\begin{aligned}
dX_1(t)=&X_1(t)\left(r-X_1(t)-\beta X_2(t)-\frac{X_1(t)}{X_1(t)+X_2(t)}X_3(t)  \right)dt\\
dX_2(t)=&X_2(t)\left(r-X_2(t)-\beta X_1(t)-\frac{X_2(t)}{X_1(t)+X_2(t)}X_3(t)  \right)dt\\
dX_3(t)=&X_3(t)\left(\frac{X_1(t)^2+X_2(t)^2}{X_1(t)+X_2(t)}-d-cX_3(t)  \right)dt
\end{aligned}
\end{equation}
where $r>0$ is the intrinsic rate of growth of the prey species, $\beta>0$ is the strength of intraspecific competition,  $d$ is the density-independent predator death rate, and $c$ is the strength of intraspecific competition for the predator. The term $X_i(t)/(X_1(t)+X_2(t))$ represents the probability that a predator is searching for prey $i$ i.e. a predator is more likely to search for the more common prey. The system of ODEs~\eqref{e:switch:ode} is nearly the same as those considered by \citet{teramoto_kawasaki1979,hutson1984}; they only differ by the inclusion of a self-limitation term in the predator.

A key concept of coexistence in the mathematical ecology literature is permanence~\citep{H81,H84,HS98,schreiber2000,patel_schreiber2017} in which asymptotically all species densities are uniformly bounded above and away from zero for all positive initial conditions.

\begin{deff} The system of differential equations~\eqref{e:switch:ode} is \emph{permanent} if there exists $m>0$ such that
\[
\frac{1}{m}\le \liminf_{t\to\infty} \min_i X_i(t)\le \limsup_{t\to\infty} \max_i X_i(t)\le m
\]
whenever $\min_i X_i(0)>0.$
\end{deff}

The following proposition characterizes, generically, when \eqref{e:switch:ode} is permanent or not permanent, i.e., impermanent.

\begin{prop}\label{prop:switch:ode} Assume $\beta>1$. If
\begin{equation}\label{e:switch:permanent}
\frac{r}{1+\beta}>d \mbox{ and }\frac{r}{\beta}(1+c(1-\beta))>d
\end{equation}
then \eqref{e:switch:ode} is permanent. If either inequality of \eqref{e:switch:permanent} is reversed, then \eqref{e:switch:ode} is not permanent.
\end{prop}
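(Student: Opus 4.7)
The plan is to apply classical permanence theory for deterministic Kolmogorov systems, in the form given by \citet{hutson1984}. Those results say that, under mild hypotheses satisfied here (all boundary $\omega$-limit sets are contained in a finite union of equilibria, since the restriction of \eqref{e:switch:ode} to each two-dimensional coordinate face is essentially a two-dimensional Lotka--Volterra flow after cancelling the switching denominator), permanence is equivalent to every boundary rest point having at least one positive transverse (external) Lyapunov exponent, while the existence of a boundary rest point at which every transverse Lyapunov exponent is negative produces interior trajectories that converge to it, hence impermanence. The only deviation from the model of \citet{hutson1984} is the self-limitation term $-cX_3$ in the predator equation, which only shifts the locations of the predator-containing equilibria and does not affect the structure of the argument.

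First I would enumerate the boundary equilibria. The origin $\0$ has transverse exponents $\lambda_1(\bdelta^*)=\lambda_2(\bdelta^*)=r>0$, so it does not obstruct permanence. On the prey axes there are $(r,0,0)$ and $(0,r,0)$, at each of which the other prey's transverse exponent is $r(1-\beta)<0$ (using $\beta>1$) while the predator's transverse exponent is $r-d$. Inside the $X_1$--$X_2$ face, the symmetric prey coexistence equilibrium is $(r/(1+\beta),r/(1+\beta),0)$, at which the predator's transverse exponent equals $r/(1+\beta)-d$. When $r>d$, the $X_1$--$X_3$ and $X_2$--$X_3$ faces each carry a prey--predator equilibrium whose coordinates are obtained by solving the linear system $r-X_1-X_1 X_3/X_1=0$ and $X_1-d-cX_3=0$, giving $X_1=(rc+d)/(1+c)$, $X_3=(r-d)/(1+c)$; the missing prey's transverse exponent at this point is $r-\beta(rc+d)/(1+c)$.

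Next I would convert positivity of the two non-trivial exponents into the stated inequalities. The inequality $r/(1+\beta)>d$ already has the advertised form. The inequality $r-\beta(rc+d)/(1+c)>0$ rearranges as $r(1+c)>\beta rc+\beta d$, i.e. $r(1+c(1-\beta))>\beta d$, which is exactly $(r/\beta)(1+c(1-\beta))>d$. Note that this second inequality automatically forces $r>d$, so when it holds the prey--predator equilibria genuinely exist. Conversely, if $r\le d$ then $(r/\beta)(1+c(1-\beta))<r/\beta<r\le d$ (using $\beta>1$), so the second inequality fails and at $(r,0,0)$ all transverse exponents are negative, giving impermanence by the Hutson dichotomy.

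The main obstacle is simply the bookkeeping of all equilibria and ensuring the two inequalities in \eqref{e:switch:permanent} together cover positivity of the transverse exponents at every boundary rest point that can occur. Once this is verified, the forward implication is immediate from Hutson's permanence criterion, and the converse follows because reversal of either inequality produces a boundary equilibrium whose transverse exponents are all strictly negative, producing interior orbits attracted to it. No delicate dynamical argument beyond that cited from \citet{hutson1984,hutson_law1985} is needed.
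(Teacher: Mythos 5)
Your proposal is correct and follows essentially the same route as the paper's proof: invoking \citet{hutson1984}'s permanence dichotomy, enumerating the same boundary equilibria (origin, prey axes, symmetric prey-coexistence point, and the two prey--predator face equilibria), computing the same transverse Lyapunov exponents, and rearranging the two nontrivial positivity conditions into exactly the inequalities of \eqref{e:switch:permanent}. The only cosmetic difference is that you additionally note the second inequality forces $r>d$ (so the prey--predator equilibria exist whenever it holds), a consistency check the paper leaves implicit.
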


Next, we consider the SDE analog of \eqref{e:switch:ode}:
\begin{equation}\label{e:switch:sde}
\begin{aligned}
dX_1(t)=&X_1(t)\left(r-X_1(t)-\beta X_2(t)-\frac{X_1(t)}{X_1(t)+X_2(t)}X_3(t)  \right)dt+\varepsilon X_1(t)dB_1(t)\\
dX_2(t)=&X_2(t)\left(r-X_2(t)-\beta X_1(t)-\frac{X_2(t)}{X_1(t)+X_2(t)}X_3(t)  \right)+\varepsilon X_2(t)dB_2(t)dt\\
dX_3(t)=&X_3(t)\left(\frac{X_1(t)^2+X_2(t)^2}{X_1(t)+X_2(t)}-d-cX_3(t)  \right)dt+\varepsilon X_3(t)dB_3(t)
\end{aligned}
\end{equation}
where $B_1(t),B_2(t),B_3(t)$ are independent, standard Brownian motions i.e. $\rm{Var}(B_i(t))=t.$ For this model, our results yield the following proposition about strong, stochastic persistence.

\begin{prop}\label{prop:switch:sde} Assume $\beta>0$. If
\begin{equation}
\frac{r}{\beta}(1+c(1-\beta))>d \mbox{ and }\varepsilon>0\mbox{ is sufficiently small,}
\end{equation}
then \eqref{e:switch:sde} is strongly, stochastically persistent.
\end{prop}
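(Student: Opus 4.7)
The plan is to apply Theorem~\ref{thm:pers} after handling a regularity technicality and computing external Lyapunov exponents at every ergodic measure on $\partial\R_+^3$. First I would note that although $f_i(\bx)$ is not locally Lipschitz at $x_1=x_2=0$, the full drifts $x_if_i(\bx)$ admit unique locally Lipschitz extensions (zero for $i=1,2$ and $x_3(-d-cx_3)$ for $i=3$), so strong existence and uniqueness hold on $\R_+^3$. To place the system in a framework where Theorem~\ref{thm:pers} applies verbatim, I would change coordinates via $S=X_1+X_2$, $y=X_1/S$; by It\^o's lemma $(S,y,X_3)$ satisfies a non-degenerate SDE on $[0,\infty)\times[0,1]\times[0,\infty)$ with extinction of an original species encoded in $y(1-y)Sx_3=0$, after which the arguments of Section~\ref{s:predprey} transfer.

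The next step is to classify the boundary ergodic measures and verify $\max_i\lambda_i(\mu)>0$ at each. At $\bdelta^*$ direct computation gives $\lambda_i(\bdelta^*)=r-\varepsilon^2/2>0$ for $i=1,2$ (provided $\varepsilon<\sqrt{2r}$) and $\lambda_3(\bdelta^*)=-d-\varepsilon^2/2<0$, so by Proposition~\ref{prop:1d} there are unique ergodic measures $\mu_1,\mu_2$ on the prey axes and none on the predator axis. Applying Proposition~\ref{prop:LV} in the reformulation yields $\int x_i\,\mu_i(d\bx)=r-\varepsilon^2/2$, whence $\lambda_j(\mu_i)=r-\beta(r-\varepsilon^2/2)-\varepsilon^2/2$ for $j\in\{1,2\}\setminus\{i\}$ and $\lambda_3(\mu_i)=r-d-\varepsilon^2$. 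In the regime $\beta>1$ implicit in the persistence hypothesis, the first is negative for small $\varepsilon$, so Proposition~\ref{prop:2d} rules out any ergodic measure with $I_\mu=\{1,2\}$; the second is positive for small $\varepsilon$ once $r>d$, yielding unique ergodic measures $\mu_{13}$ and $\mu_{23}$ on the mixed planes.

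For $\mu_{13}$ (and symmetrically $\mu_{23}$) I solve the Lotka-Volterra linear system $r-\hat x_1-\hat x_3-\varepsilon^2/2=0=\hat x_1-d-c\hat x_3-\varepsilon^2/2$ to obtain $\hat x_3=(r-d-\varepsilon^2)/(1+c)$, and Proposition~\ref{prop:LV} then yields the external exponent
\[
\lambda_2(\mu_{13})=r-\beta\hat x_1-\tfrac{\varepsilon^2}{2}\;\longrightarrow\;r-\frac{\beta(rc+d)}{1+c}\quad\text{as }\varepsilon\to 0,
\]
which is positive precisely when $\tfrac{r}{\beta}(1+c(1-\beta))>d$. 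With $\max_i\lambda_i(\mu)>0$ now established at every ergodic measure on $\partial\R_+^3$ with $|I_\mu|\le 2$, and since only two nontrivial single-species measures exist so no rock-paper-scissors configuration is possible, Theorem~\ref{thm:pers} delivers strong stochastic persistence.

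The main obstacle is justifying the applicability of Theorem~\ref{thm:pers} to the original non-Lipschitz system: the It\^o reformulation is the cleanest route, but it requires carefully matching the boundary geometry and ergodic measures across the two coordinate systems and checking that Assumptions~\ref{a.nonde} and~\ref{a.extn2} survive the extension and change of variables. A secondary technical concern is coordinating the smallness of $\varepsilon$ so that every strict sign condition above holds simultaneously, and (if one wishes to cover $\beta\in(0,1]$) ruling out an additional ergodic measure on the open $x_1x_2$ plane via an explicit invasion argument.
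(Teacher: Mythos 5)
Your proposal matches the paper's proof essentially step for step: the Lipschitz-extension remark, the $(S,y,X_3)$ change of coordinates to invoke the Section~\ref{s:predprey} machinery, the identification of the boundary ergodic measures via Propositions~\ref{prop:1d}, \ref{prop:2d}, and \ref{prop:LV}, the computation of the external Lyapunov exponents, and the final appeal to Theorem~\ref{thm:pers}. You even correctly flag the point the paper glosses over — namely that the argument ``$\lambda_j(\mu_i)<0$ rules out a $\mu_{12}$'' tacitly uses $\beta>1$, so covering the stated hypothesis $\beta\in(0,1]$ would require an extra step verifying positivity of $\lambda_3(\mu_{12})$ — and your value $\lambda_3(\bdelta^*)=-d-\varepsilon^2/2$ is the correct one (the paper's $-d-\varepsilon^2$ appears to be a typo, immaterial since both are negative).
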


\begin{rmk}
Propositions~\ref{prop:switch:ode} and \ref{prop:switch:sde} imply that for $\frac{r}{\beta}>d>\frac{r}{1+\beta}$ and  $c,\varepsilon>0$ sufficiently small, the deterministic model is not permanent, but the stochastic counterpart is stochastically persistent. This difference stems from the deterministic model having an internal equilibrium for species $1$ and $2$ whose external Lyapunov exponent is negative i.e. $r/(1+\beta)-d<0$. However, the stochastic model has no ergodic invariant measure supporting species $1$ and $2$ and, consequently, doesn't have this negative external Lyapunov exponent.
\end{rmk}

\section{Discussion}\label{s:disc}

Due to the irreducibility assumption (Assumption~\ref{a.tight}) of the stochastic Kolmogorov systems considered here, our process $\BX$ has a finite number of ergodic invariant probability measures in any dimension. However, in dimension $\le 3$, we prove there are constraints on what types of configurations of ergodic measures are possible. Moreover, we show that, generically, these configurations can be identified by studying the average per-capita growth rates of the infinitesimally rare species, i.e.. the external Lyapunov exponents $\lambda_i(\mu)$  that we have shown to be generically non-zero.

We find there are three basic types of asymptotic behavior. First, the Kolmogorov process $\BX$ may be stochastically persistent which corresponds to all the species persisting. Specifically, there is a unique ergodic measure $\mu$ supporting all the species. This ergodic measure characterizes (with probability one), the asymptotic, statistical behavior of $\BX$ for all strictly positive initial conditions $\BX(0)\gg 0$. In particular, for any continuous bounded function $h$ (i.e. an observable for the system), the temporal averages $\frac{1}{t}\int_0^t h(\BX(t))dt$ converge (with probability one) to the spatial average $\int h(\bx)\mu(\bx)$. Verifying stochastic persistence using the external Lyapunov exponents reduces to a simple procedure. First, for any ergodic measure $\mu$ supporting two or fewer species (i.e. $|I_\mu||\le 2$), there needs to be at least one species with a positive per-capita growth rate i.e. $\max_i \lambda_i(\mu)>0$. Second, if there is no rock-paper-scissor intransitivity between the species, then $\BX$ is stochastically persistent. Alternatively, if there is a rock-paper-scissor intransitivity, persistence requires that the sum of the product of the positive external Lyapunov exponents and the product of the negative Lyapunov external exponents is positive, where the products are taken over the single species ergodic measures.

The second and third form of asymptotic behaviors occur when the system is not stochastically persistent. In these cases, the process $\BX$ converges with probability one to the boundary of the three-dimensional, non-negative orthant. However, this convergence can take on two forms. The first form of extinction corresponds to ergodic measures $\mu$ that are attractors on the boundary of the orthant. An attractor is an ergodic measure $\mu$ such that $I_\mu\subsetneq \{1,\dots,n\}$ and $\max_{i\notin I_\mu}\lambda_i(\mu)<0$, i.e., the measure $\mu$ only supports a subset of the species and all its external Lyapunov exponents are negative. There can exist at most a finite number of these ergodic attractors, say $\mu^1,\dots,\mu^k$ (see Figure~\ref{fig:one}). The only constraint on these ergodic attractors is that a pair of them can not correspond to a nested pair of species i.e. $I_{\mu^i}$ is never a subset of $I_{\mu^j}$ for $i\neq j$. When these ergodic attractors exist and all species are initially present, the process converges with probability one to one of these attractors, and there is a strictly positive probability that it converges to any of the $k$ ergodic attractors. The second form of extinction corresponds to an attractor rock-paper-scissor dynamic on the boundary of the non-negative orthant. In this case, the asymptotic statistical behavior of $\BX$ is (with probability one) determined by convex combinations of the single species ergodic measures.

For higher dimensions, we conjecture there is a similar classification of the behaviors of $\BX$. In the simplest setting, when one looks at Lotka-Volterra food chains and each species only interacts with its immediate trophic neighbors the classification has been completed in \cite{HN17,HN17b}. The classification for general Kolmogorov systems will have to deal with higher dimensional analogs of the rock-paper-scissors intransitives. As already explored in deterministic models, these higher dimensional intransitivities may involve complex networks of transitions between subcommunities due to single or multiple species invasions~\citep{hofbauer1994,brannath1994,krupa1997,schreiber1998stabilizing,schreiber2004simple,vandermeer2011intransitive}. For example, \citet{schreiber1998stabilizing} illustrates that for a community of $n$ founder controlled prey species and $n$ specialist predators, the predator-prey pairs get displaced by the invasion of any other prey species which then facilitates the establishment of the predator. This leads to a high dimensional heteroclinic cycle. Despite these complexities, one might conjecture that one could extend the rock-paper-scissor extinction outcome to the existence of a finite number of ergodic measures such that with positive probability, the asymptotic behavior is determined by non-trivial convex combinations of these ergodic measures. Moreover, in higher dimensions, one would have to allow for the possibility that ergodic attractors and these non-ergodic, intransitive attractors can occur simultaneously to govern the extinction dynamics. Here, we have verified a key step for such a classification in higher dimensions by showing that the external Lyapunov exponents are, generically, non-zero.

Another important corollary of our work is with respect to \textit{modern coexistence theory} \citep{C00,ellner2019} -- this is fundamental framework that is widely used by theoretical ecologists to study the mechanisms underlying the coexistence of species. This theory is based entirely on using external Lyapunov exponents, also called \textit{invasion growth rates}. Our work shows for a general class of SDE models that the external Lyapunov exponents fully describe the long term behavior of the system and, thereby,  justifies rigorously the main premise of modern coexistence theory for these models.

\textbf{Acknowledgments:} The authors acknowledge
support from the NSF through the grants DMS-1853463 for
Alexandru Hening, DMS-1853467 for Dang Nguyen, and DMS-1716803 for Sebastian Schreiber.
\bibliographystyle{agsm}
\bibliography{LV}

\end{document}